\documentclass[a4paper,notitlepage,twoside,reqno,11pt]{amsart}

\usepackage{anysize}
\marginsize{3.4cm}{3.4cm}{3cm}{3cm}

\usepackage{bbm,pifont,latexsym}

\usepackage{dcolumn,indentfirst}
\usepackage[hypertexnames=false,bookmarksopen=true,linktocpage=true,pdfstartview={XYZ null null 1.25}]{hyperref}
\usepackage{amsmath,amssymb,amscd,amsthm,amsfonts,mathrsfs}
\usepackage{color,graphicx,xcolor,graphics,subfigure,extarrows,caption2}
\usepackage{pdfpages,titletoc}

\usepackage{autonum}

\newtheorem{thm}{Theorem}[section]
\newtheorem{lem}[thm]{Lemma}
\newtheorem{cor}[thm]{Corollary}

\theoremstyle{definition}
\newtheorem*{defi}{Definition}

\newtheorem*{rmk}{Remark}

\newcommand{\EC}{\widehat{\mathbb{C}}}
\newcommand{\C}{\mathbb{C}}

\newcommand{\N}{\mathbb{N}}
\newcommand{\Q}{\mathbb{Q}}
\newcommand{\R}{\mathbb{R}}
\newcommand{\T}{\mathbb{T}}
\newcommand{\Z}{\mathbb{Z}}

\newcommand{\ii}{\textup{i}}
\newcommand{\Crit}{\textup{Crit}}
\newcommand{\diam}{\textup{diam}}

\newcommand{\HT}{\textup{HT}}
\newcommand{\Bif}{\textup{Bif}}

\newcommand{\MB}{\mathcal{B}}
\newcommand{\MH}{\mathcal{H}}
\newcommand{\MP}{\mathcal{P}}
\newcommand{\MU}{\mathcal{U}}

\makeatletter\@addtoreset{equation}{section}\makeatother

\begin{document}

\author[Weiwei Su]{Weiwei Su}
\address{School of Mathematics, Nanjing University, Nanjing 210093, P. R. China}
\email{suweiwei2024@163.com}

\author[Fei Yang]{Fei Yang}
\address{School of Mathematics, Nanjing University, Nanjing 210093, P. R. China}
\email{yangfei@nju.edu.cn}

\title[Carpet Julia sets of transcendental entire functions]{Sierpi\'{n}ski carpet Julia sets of transcendental entire functions}

\begin{abstract}
We prove that Sierpi\'{n}ski carpet parameters are dense in the bifurcation loci of some natural families of finite type transcendental entire functions having exactly one active critical orbit.
Moreover, some specific transcendental entire functions with Sierpi\'{n}ski carpet Julia sets are constructed such that they contain either an attracting basin, a parabolic basin, a Siegel disk or wandering domains.
The proofs are mainly based on applying the polynomial-like renormalization and some results on the local connectivity of transcendental Julia sets obtained before.
\end{abstract}

\subjclass[2020]{Primary: 37F10; Secondary: 37F44, 37F46}

\keywords{Julia set; Sierpi\'{n}ski carpet; local connectivity; transcendental entire function; bifurcation locus}

\date{\today}



\maketitle

\section{Introduction}\label{introduction}

\subsection{Backgrounds}

According to \cite{Why58}, a subset $S$ in the Riemann sphere $\EC$ is called a \emph{Sierpi\'{n}ski carpet} (\emph{carpet} in short) if $S$ is compact, connected, locally connected, has empty interior and the complement of $S$ consists of infinitely many components which are bounded by pairwise disjoint Jordan curves. All Sierpi\'{n}ski carpets are homeomorphic to each other.

It was known that as Julia sets of rational maps, Sierpi\'{n}ski carpets play an important role in complex dynamics (see \cite{McM94a}, \cite{BLM16}, \cite{DL25}).
The first example of Sierpi\'{n}ski carpet Julia sets was found by Milnor and Tan Lei \cite[Appendix F]{Mil93}. They constructed such Julia sets in quadratic rational maps having two super-attracting cycles with periods $3$ and $4$. In \cite{DFGJ14}, Devaney, Fagella, Garijo and Jarque proved the existence of Sierpi\'{n}ski carpet Julia sets in quadratic rational families having super-attracting cycles with various periods.
As a singular perturbation of the monomial $z\mapsto z^n$, the McMullen maps $z\mapsto z^n+\lambda/z^m$ ($n\geq 2$, $m\geq 1)$ was studied by Devaney, Look and Uminsky \cite{DLU05} and they proved that this family can produce a lot of hyperbolic components called \textit{Sierpi\'{n}ski holes} which correspond to carpet Julia sets (see also \cite{BDL05}). One may also refer to \cite[\S 5.6]{Pil94}, \cite{Mor00}, \cite{Ste08}, \cite{Loo10}, \cite{Yan18} and \cite{FY20} for some other rational maps having Sierpi\'{n}ski carpet Julia sets.

The Julia sets of polynomials have no buried points. Hence any polynomial Julia set cannot be a Sierpi\'{n}ski carpet.
However, the Julia sets of transcendental entire functions can be. Note that the Julia set $J(f)$ of any transcendental entire function $f$ is a closed subset of $\C$, which is not compact. Here we say that $J(f)$ is a Sierpi\'{n}ski carpet if $J(f)\cup\{\infty\}$ is (regarded as a subset of $\EC$).
The first carpet Julia sets of transcendental entire functions was constructed by Morosawa \cite{Mor99}. He proved that if $a>1$, then the Julia set of
\begin{equation}
f(z)=ae^a(z-(1-a))e^z
\end{equation}
is a Sierpi\'{n}ski carpet. See \cite{GJM11} and \cite[\S 2]{BFR15} for further study of this family and beyond.
Soon after, Bergweiler and Morosawa also found another transcendental entire family with carpet Julia sets (see \cite[Example 2]{BM02}).
Both of the carpet Julia sets constructed in \cite{Mor99} and \cite{BM02} contain escaping singular values.

In this paper, we show that as in the rational case, there are many transcendental entire families containing \textit{Sierpi\'{n}ski holes}, which are connected components of hyperbolic transcendental entire functions whose Julia sets are Sierpi\'{n}ski carpets. Moreover, we also construct some specific transcendental entire functions with Sierpi\'{n}ski carpet Julia sets such that they contain either a parabolic basin, a Siegel disk or wandering domains.

\subsection{Main results}

Let $f$ be a transcendental entire function. We use $F(f)$ and $J(f)$ to denote the \textit{Fatou set} and \textit{Julia set} of $f$ respectively. Let $CV(f)$ and $AV(f)$ be the \textit{critical values} and \textit{asymptotic values} of $f$ in $\C$ respectively. The singular set $S(f)$ of $f$ is the collection of all \textit{singular values} of the inverse function $f^{-1}$, which is the closure of  $CV(f)\cup AV(f)$ in $\C$. We call
\begin{equation}
\MB:=\{f:\C\to\C \text{ is transcendental entire}: S(f) \text{ is bounded}\}
\end{equation}
the \textit{Eremenko-Lyubich class} (compare \cite{EL92}).
The \textit{postsingular set} of $f$ is
\begin{equation}\label{equ:P-f}
\MP(f):=\overline{\bigcup_{n\geq 0}f^{\circ n}\big(S(f)\big)}.
\end{equation}
The entire function $f$ is called \textit{hyperbolic} if $f\in\MB$ and every element of $S(f)$ belongs to an attracting basin of $f$, or equivalently, $f$ is hyperbolic if and only if $\MP(f)$ is a compact subset of the Fatou set $F(f)$ (see \cite[Proposition 2.1]{BFR15}).

\begin{thm}\label{thm:carpet-hyper}
Let $f$ be a hyperbolic transcendental entire function without asymptotic values. Suppose that
\begin{enumerate}
\item every component of $F(f)$ contains at most one critical value and the multiplicity of the critical points of $f$ is uniformly bounded; and
\item the components of immediate attracting basins of $f$ have pairwise disjoint boundaries.
\end{enumerate}
Then $J(f)$ is a Sierpi\'{n}ski carpet.
\end{thm}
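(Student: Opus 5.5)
We will verify the defining properties of a Sierpi\'{n}ski carpet for $J(f)\cup\{\infty\}\subset\EC$ in turn, using Whyburn's characterization: a compact, connected, locally connected set with empty interior whose complementary components are infinitely many Jordan domains with pairwise disjoint closures.

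\textbf{Step 1: the structure of $F(f)$.} Since $f$ is hyperbolic, $f\in\MB$ and the results of Eremenko--Lyubich rule out wandering domains and Baker domains. Hyperbolicity also excludes rotation domains and parabolic basins, because the postsingular set accumulates on the boundary of such components. Hence every Fatou component is a preimage of an immediate attracting basin. Each immediate basin $U$ contains a singular value. Because $f$ has no asymptotic values, $f:U\to U$ is a proper map of finite degree. For $f\in\MB$ every Fatou component is simply connected (Baker). The interior of $J(f)$ is empty since $F(f)\neq\emptyset$. There are infinitely many Fatou components, since a transcendental $f$ has infinitely many preimages of a basin. Connectedness of $J(f)\cup\{\infty\}$ in $\EC$ follows from simple connectivity of all Fatou components.

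\textbf{Step 2: local connectivity and Jordan boundaries.} Next we invoke the local connectivity results for hyperbolic transcendental Julia sets cited in the abstract (earlier work of the authors and of Bara\'{n}ski--Fagella--Jarque--Karpi\'{n}ska type results for hyperbolic $f$ without asymptotic values, bounded critical multiplicity, at most one critical value per component). These results give two things. First, $J(f)\cup\{\infty\}$ is locally connected. Second, every Fatou component $U$ is a bounded Jordan domain; since there are no asymptotic values, components are bounded.

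For the Jordan property the mechanism is as follows. Take the Riemann map $\phi:\mathbb{D}\to U$ for an immediate basin $U$. Then $f|_U$ is conjugate to a Blaschke product, which has at most one critical value in $U$ by (1), and so is conjugate to $z\mapsto z^d$. Expansion of $f$ near $J(f)$ in the hyperbolic metric of $\C\setminus\MP(f)$ yields uniform continuity of $\phi$, so $\phi$ extends continuously to $\partial\mathbb{D}$. Injectivity on the boundary follows because a pinching point would force two internal rays to land together. The region they bound would then contain Julia points whose orbits can never leave it, contradicting expansion. Preimage components $V$ map properly onto some basin component with degree bounded by the uniform multiplicity bound, and this bound is used to transfer the Jordan property to $V$.

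\textbf{Step 3: disjoint closures (main obstacle).} Suppose two distinct Fatou components $V_1,V_2$ share a boundary point $z$. Iterate until one of them lands in an immediate basin. If $f^{\circ n}(V_1)$ and $f^{\circ n}(V_2)$ are distinct immediate basins, hypothesis (2) gives a contradiction, because $f^{\circ n}(z)$ lies on both boundaries. Otherwise, at some first time $k$ the images coincide, say $W=f^{\circ k}(V_1)=f^{\circ k}(V_2)$, and then $f^{\circ (k-1)}(z)$ is a point where two components of $f^{-1}(W')$ touch. Near this point $f$ is not locally injective on $\overline{V_1'\cup V_2'}$, so it must be a critical point $c\in J(f)$, contradicting hyperbolicity, since critical points lie in $F(f)$. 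Strictly, two preimage components could also meet at a non-critical point with the same image boundary point from different "sides"; we rule this out using that $\partial W'$ is a Jordan curve and $f$ is a local homeomorphism there. The delicate case is when the two components have different images at the first step, which reduces inductively to the immediate-basin case. I expect this combinatorial bookkeeping, together with handling possibly infinite-degree preimages in Step 2, to be the hardest part.
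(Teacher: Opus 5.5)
Your proposal is correct and is essentially the paper's proof. Local connectivity and the bounded Jordan-domain property of all Fatou components come from Bergweiler--Fagella--Rempe \cite[Corollaries 1.3, 1.9(a), Theorem 1.10]{BFR15}, not from the authors you name. Disjointness of boundaries is obtained exactly as in the paper: hypothesis (b) excludes two distinct periodic components, and at a first coincidence time $f^{\circ k}(V_1)=f^{\circ k}(V_2)$ the Jordan boundary plus local injectivity would force a critical point in $J(f)$.

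A few small corrections, none of which affects the argument:
\begin{itemize}
\item In Step 3, iterate until \emph{both} components are periodic, not just one.
\item In your Step 2 heuristic, $f|_U$ is in general conjugate to a unicritical Blaschke product, not to $z\mapsto z^d$.
\item Eremenko--Lyubich exclude wandering domains only for finite type functions. For hyperbolic $f$, use \cite[Proposition 2.1]{BFR15} instead.
\end{itemize}
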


In fact, condition (a) in Theorem \ref{thm:carpet-hyper} implies that $J(f)$ is locally connected (see \cite[Corollary 1.9(a)]{BFR15}) and condition (b) further leads to $J(f)$ being a Sierpi\'{n}ski carpet. Hence Theorem \ref{thm:carpet-hyper} still holds if condition (a) is replaced by some other conditions which guarantee the local connectivity of $J(f)$ (see \cite[Corollaries 1.8 and 1.9(b)]{BFR15} for example). Moreover, if $f$ has exactly one immediate attracting basin and the period is one, then condition (b) is automatically satisfied.

\medskip
In the rational case, the hyperbolic components corresponding to Sierpi\'{n}ski carpet Julia sets are called Sierpi\'{n}ski holes.
Such concept can be adopted to the transcendental case naturally (see \S\ref{sec:carpet-hole} for the definition and Figure \ref{Fig:M-sin-2}).

\begin{figure}[!htpb]
  \setlength{\unitlength}{1mm}
  \centering
  \includegraphics[width=0.96\textwidth]{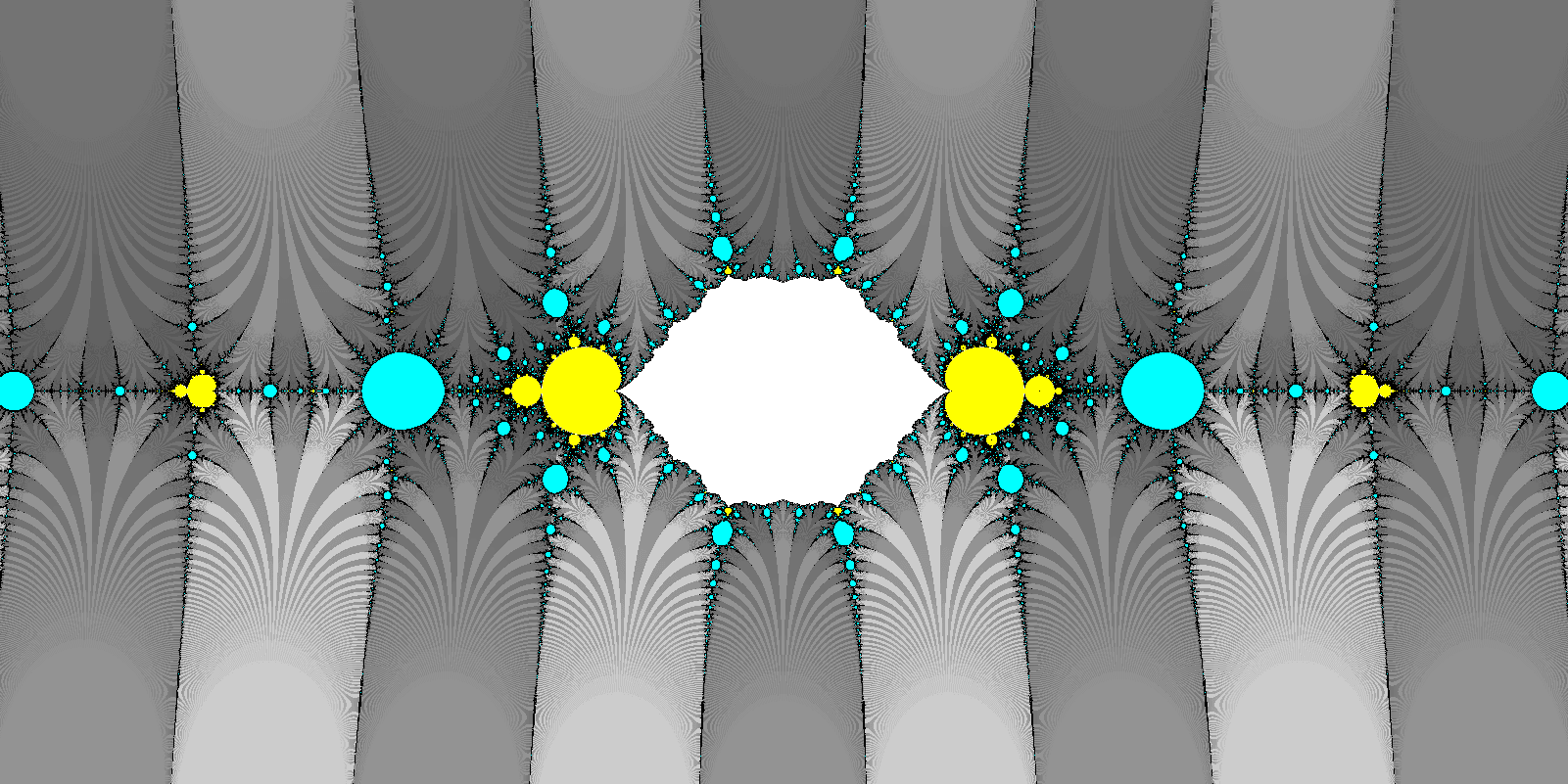}
  \caption{The $\lambda$-parameter plane of the family $z\mapsto\lambda\sin^2 z$, where $\lambda\in\C\setminus\{0\}$. Many Sierpi\'{n}ski holes are visible (the cyan and some yellow domains) and the carpet parameters are dense in the bifurcation locus. Figure range: $[-6.4,6.4]\times[-3.2,3.2]$.}
  \label{Fig:M-sin-2}
\end{figure}

\begin{cor}\label{cor:carpet-hole}
For $f_\lambda(z)=\lambda\sin^d z$, where $d\geq 2$ and $\lambda\in\C\setminus\{0\}$, if $\lambda$ lies in the attracting basin of $0$ but not in the immediate attracting basin, then $J(f_\lambda)$ is a Sierpi\'{n}ski carpet. In particular, for each $\lambda_k=k\pi$ with $k\in\Z\setminus\{0\}$, there is a Sierpi\'{n}ski hole in the parameter plane of the family $\{f_\lambda\}$ containing $\lambda_k$.
\end{cor}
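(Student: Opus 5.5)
The plan is to verify the hypotheses of Theorem \ref{thm:carpet-hyper} for $f_\lambda(z)=\lambda\sin^d z$, and then to use hyperbolic stability to obtain a Sierpi\'{n}ski hole.

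First, the singular values. Since $\sin$ has no asymptotic values, neither does $f_\lambda$. The critical points are of two kinds:
\begin{enumerate}
\item the points $k\pi$, where $\sin^d$ vanishes to order $d$, so each is critical of multiplicity $d-1$;
\item the points $\pi/2+k\pi$, where $\sin' = 0$, each of multiplicity $1$.
\end{enumerate}
Hence the critical values are $0$ and $\lambda$ (the latter because $\sin^d(\pi/2+k\pi)=(\pm1)^d$; if $d$ is odd one also gets $-\lambda$). Multiplicities are bounded by $d-1$, and $S(f_\lambda)$ is bounded, so $f_\lambda\in\MB$.

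Now $0$ is a super-attracting fixed point with immediate basin $A_0$. Suppose $\lambda$ lies in the basin $A$ of $0$ but not in $A_0$.
\begin{itemize}
\item For $d$ even, the only singular values are $0$ and $\lambda$, and both lie in $A$. So $\MP(f_\lambda)$ is a compact subset of $A\subset F(f_\lambda)$: the orbit of $\lambda$ converges to $0$ and eventually enters a compact neighbourhood of $0$ inside $A_0$. Thus $f_\lambda$ is hyperbolic.
\item For $d$ odd, $f_\lambda$ is odd, so $A$ is symmetric and $-\lambda\in A\setminus A_0$ as well. The same argument again gives hyperbolicity.
\end{itemize}

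Next, condition (a) of Theorem \ref{thm:carpet-hyper}: each Fatou component contains at most one critical value. Since $\lambda\notin A_0$, the component $A_0$ contains only the critical value $0$. The component $U\ni\lambda$ is distinct from $A_0$. For $d$ odd, $\lambda$ and $-\lambda$ must lie in different components. I would argue this as follows: $U$ is strictly preperiodic, landing eventually in $A_0$. If $-\lambda\in U$, then $U=-U$ by symmetry, and $f_\lambda(U)=f_\lambda(-U)=-f_\lambda(U)$, so $f_\lambda(U)$ is symmetric too. Iterating, some forward image is $A_0$ and the previous one is symmetric and contains $0$ or a preimage of $0$ symmetric under $z\mapsto -z$. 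This part is a mild obstacle, and I would handle it by an invariance argument. An alternative for $d$ odd is to note that the proof of Theorem \ref{thm:carpet-hyper} only needs local connectivity, which \cite[Corollary 1.9]{BFR15} gives for hyperbolic maps with bounded criticality under (a) or its variants.

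Condition (b) holds automatically, since the only immediate attracting basin is $A_0$, of period one, as remarked after the theorem. Hence $J(f_\lambda)$ is a Sierpi\'{n}ski carpet.

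For $\lambda_k=k\pi$ with $k\neq 0$, we have $f_{\lambda_k}(\lambda_k)=k\pi\sin^d(k\pi)=0$, so $\lambda_k$ lies in the basin of $0$. The key obstacle is showing $\lambda_k\notin A_0$. I would argue that $A_0$ is bounded and cannot contain any other preimage of $0$: if it did, $f_{\lambda_k}:A_0\to A_0$ would have more critical points than allowed. By Riemann--Hurwitz for the proper map $f|_{A_0}$ with $A_0$ simply connected, $\deg(f|_{A_0})-1$ equals the number of critical points in $A_0$ counted with multiplicity. If $k\pi\in A_0$, then $A_0$ contains at least $2(d-1)$ critical multiplicities from $0$ and $k\pi$, while the only critical value in $A_0$ is $0$. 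Every critical point in $A_0$ then maps to $0$, which forces all preimages of $0$ in $A_0$ to be fully branched. In that case $\deg=d\cdot(\#\text{zeros in }A_0)$ while $\deg -1 = (d-1)\cdot\#\text{zeros}$, which is impossible when there are at least two zeros. Hence $\lambda_k\notin A_0$ and $J(f_{\lambda_k})$ is a carpet.

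Finally, hyperbolicity is an open condition, and along the hyperbolic component the quasiconformal conjugacy preserves the topology of $J$. The conditions "$\lambda\in A\setminus A_0$" persist in the component containing $\lambda_k$. That component is therefore a Sierpi\'{n}ski hole in the sense of \S\ref{sec:carpet-hole}.
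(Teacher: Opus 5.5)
\textbf{There is a genuine gap: your argument that $\lambda_k\notin A_0$ is circular.} You suppose $k\pi\in A_0$ and then use that ``the only critical value in $A_0$ is $0$''. But for $\lambda=\lambda_k$ the point $k\pi$ is both a zero of $f_{\lambda_k}$ and the critical value $\lambda_k$ itself. So under your hypothesis $A_0$ contains the critical values $0$ and $\lambda_k$ (and $-\lambda_k$ if $d$ is odd). The critical points $\pi/2+m\pi$, which map to $\pm\lambda_k$, may then also lie in $A_0$. Riemann--Hurwitz then gives only $ds-1=(d-1)s+t$, i.e.\ $s=t+1$, where $t$ is the number of such points in $A_0$, and there is no contradiction.

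The boundedness of $A_0$ (finiteness of $\deg f|_{A_0}$) is also unjustified at that stage. For small real $\lambda>0$ one has $f_\lambda(\R)\subset[-\lambda,\lambda]\subset A_0$, hence $\R\subset A_0$, so $A_0$ is unbounded and contains every $m\pi$. Any argument must therefore use the specific value $\lambda=k\pi$.

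The paper does this with real dynamics. For $k>0$, $f_{\lambda_k}$ has a repelling fixed point $z_0\in(0,\pi/2)$; for $k<0$, a repelling periodic point $z_0\in(-\pi/2,0)$. So $\pm z_0\in J(f_{\lambda_k})$ lie on $\R$ strictly between $0$ and $\pm\lambda_k$. Since $A_0$ meets $\R$, it is symmetric under $z\mapsto\bar z$. A path in $A_0$ from $0$ to $\lambda_k$, together with its conjugate, would then be a loop in $A_0$ with odd winding number about $z_0\in J(f_{\lambda_k})$. This contradicts the simple connectivity of $A_0$ (maximum principle).

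\textbf{For odd $d$ you also never show that $\lambda$ and $-\lambda$ lie in different Fatou components.} ``An invariance argument'' is not an argument, and the fallback to unspecified variants of \cite[Corollary 1.9]{BFR15} is not substantiated. The missing idea is short. Suppose a component $U$ contained $\pm\lambda$. Then $U=-U$ and $0\notin U$, since $U\neq A_0$. A path in $U$ from $\lambda$ to $-\lambda$, followed by its image under $z\mapsto -z$, is a loop in $U$ with odd winding number about $0$. So $U$ would be multiply connected, which is impossible for a component of an attracting basin.

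Alternatively, your iteration closes once you note the following. The symmetric component $V=f^{\circ(j-1)}(U)\neq A_0$ with $f(V)=A_0$ contains exactly one zero of $f$. Indeed, $A_0$ contains no critical value other than $0$ and there are no asymptotic values, so $f\colon V\setminus f^{-1}(0)\to A_0\setminus\{0\}$ is a finite cyclic covering. Symmetry then forces that zero to be $0$, i.e.\ $V=A_0$, a contradiction.

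The remaining steps are fine as written: hyperbolicity, condition (b), condition (a) for even $d$, and the passage from a carpet parameter to a Sierpi\'{n}ski hole.
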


A transcendental entire function $f$ is called \textit{finite type}, if it has finitely many \textit{singular values} (i.e., critical values or asymptotic values).
A holomorphic family $(f_\lambda)_{\lambda\in\Lambda}$ of finite type entire functions parameterized by a connected complex manifold $\Lambda$ is called \textit{$J$-stable} at $\lambda_0$, if there exists a neighborhood $U$ of $\lambda_0$ in $\Lambda$ such that the Julia set $J(f_\lambda)$ moves holomorphically in $U$ (see \cite{EL92} or \cite{ABF23} for more equivalent characterizations of $J$-stability of finite type entire functions).
The \textit{bifurcation locus} $\Bif(f_\lambda)$ of this family is the set of all $\lambda_0\in\Lambda$ such that $(f_\lambda)_{\lambda\in\Lambda}$ is not $J$-stable at $\lambda_0$.

\begin{thm}\label{thm:carpet-dense}
Let $(f_\lambda)_{\lambda\in\Lambda}$ be a natural family of finite type transcendental entire functions without asymptotic values, where $\Lambda \subset \mathbb{C}$ is a domain. Suppose that
\begin{enumerate}
\item the multiplicity of the critical points of $f_\lambda$ is uniformly bounded;
\item $c_{\lambda}$ is a marked active critical point which is not persistently a Picard exceptional value and $f_\lambda(c_\lambda)$ is the only active critical value; and
\item either the remaining critical values are all preperiodic in the Julia set, or the number of the remaining critical values is one and it lies in a fixed immediate attracting or parabolic basin.
\end{enumerate}
Then the Sierpi\'{n}ski carpet parameters are dense in $\Bif(f_\lambda)$.
\end{thm}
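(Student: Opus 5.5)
Fix $\lambda_0\in\Bif(f_\lambda)$ and a small disk $D$ around $\lambda_0$. The aim is a parameter $\lambda\in D$ for which $f_\lambda$ satisfies a version of Theorem~\ref{thm:carpet-hyper}: all singular values are captured and the Fatou components have pairwise disjoint Jordan boundaries. Since $c_\lambda$ is the only active critical point, activity on $D$ means the family $\{\lambda\mapsto f_\lambda^{\circ n}(f_\lambda(c_\lambda))\}_n$ is not normal on $D$. By Montel, and because $f_\lambda(c_\lambda)$ is not persistently a Picard exceptional value, we get $\lambda_1\in D$ and $n\ge 1$ with $f_{\lambda_1}^{\circ n}(f_{\lambda_1}(c_{\lambda_1}))$ equal to a critical point of $f_{\lambda_1}$. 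A standard transversality/Montel argument then shows that superattracting parameters, where $c_\lambda$ is periodic with some large period $p$, are dense in $\Bif$. We may therefore pick $\lambda_1\in D$ so that $c_{\lambda_1}$ is periodic of period $p$ and the hyperbolic component $H\ni\lambda_1$ is small.

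The better choice is to put $c_\lambda$ into a Fatou component that is a strictly preperiodic preimage of an already existing attracting component, rather than creating a new cycle. In case (c) with an attracting or parabolic fixed basin $A_\lambda$, non-normality lets us find $\lambda\in D$ with $f_\lambda^{\circ n}(c_\lambda)\in A_\lambda\setminus(\text{immediate basin})$. This mirrors Corollary~\ref{cor:carpet-hole}: the active critical value is escorted into a strictly preperiodic component. In the other case of (c), the remaining critical values are preperiodic in $J$, so there is no existing basin. There I would create one: first take a superattracting parameter for $c_\lambda$ of period $p$, which gives a cycle of Fatou components $U_0,\dots,U_{p-1}$. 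Near such a parameter, polynomial-like renormalization makes $f^{\circ p}$ on a neighborhood of $\overline{U_0}$ a polynomial-like map of degree $m=$ the local degree at $c_\lambda$, hybrid equivalent to $z\mapsto z^m+c$. Straightening gives a copy of the multibrot set in parameter space, since by (a) the local degrees are bounded. Inside that copy, pick $c$ where the critical point is captured by a strictly preperiodic preimage of the basin, i.e.\ a basin-capture component. The function $f_\lambda$ is then hyperbolic for $c_\lambda$: the other critical values lie in $J$ but are preperiodic, so they are harmless for local connectivity.

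Next, local connectivity of $J(f_\lambda)$. In the hyperbolic case with an attracting basin, condition (a) of Theorem~\ref{thm:carpet-hyper} holds, since each Fatou component contains at most one critical value. In the parabolic, preperiodic or geometrically finite situations I would invoke the BFR15 results quoted after Theorem~\ref{thm:carpet-hyper} (Corollaries 1.8, 1.9(b)), which cover geometrically finite maps with bounded critical multiplicity. So $J(f_\lambda)$ is locally connected, every Fatou component is a Jordan domain, and there are infinitely many components because $f_\lambda$ is transcendental.

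The main obstacle is showing that the closures of distinct Fatou components are disjoint. In the capture construction only one periodic cycle of Fatou components exists. For a fixed immediate basin (period one), two of its preimages could meet only at a point of $J$ mapping to $\partial A$ with some branching. Such branching requires a critical point on the boundary, and I would exclude it from the bounded combinatorics. Critical points in $J$ are preperiodic, and I would need to choose the capture parameter so that they do not land on $\partial A$; the density of admissible parameters leaves the freedom to do this. For the renormalization case the cycle has period $p>1$, and I would show that $\partial U_i\cap\partial U_j=\emptyset$. This follows because the straightened polynomial $z^m+c$ with $c$ in a capture component has a Jordan-curve basin boundary, and the small filled Julia sets $K_i$ of the renormalization are pairwise disjoint. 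For that I would choose the renormalization to be primitive, not satellite, which is possible since primitive small copies are dense near $\Bif$ by the same Montel argument. Combining these steps, the carpet parameters accumulate at $\lambda_0$.
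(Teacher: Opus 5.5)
There is a genuine gap, and it is in the first alternative of hypothesis (c). Suppose all remaining critical values are preperiodic in $J(f_\lambda)$. Then $f_\lambda(c_\lambda)$ is the only singular value that can lie in $F(f_\lambda)$. By Fatou's theorem, every attracting or parabolic cycle must contain it in its \emph{immediate} basin, so $c_\lambda$ can never be captured by a strictly preperiodic component. Likewise, $z^m+c$ is unicritical, so a copy of $M_m$ has no ``basin-capture'' components, and the parameter you propose to pick does not exist.

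In this case the Fatou set is necessarily the grand orbit of a single cycle $U_0,\dots,U_{p-1}$ generated by $c_\lambda$ itself. The real difficulty is to show that the closures of these periodic components are pairwise disjoint. Your argument for this rests on the nonexistent capture components. Also, ``primitive copies are dense by the same Montel argument'' is not something Montel provides: it gives superattracting parameters, not disjointness of the small filled Julia sets $f_\lambda^{\circ i}(K(g))$.

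The missing idea is the paper's. Use Theorem~\ref{thm-univ} to get, near any $\lambda_0\in\Bif(f_\lambda)$, polynomial-like maps $f_\lambda^{\circ n}:U\to V$ hybrid conjugate to $q_c$ for \emph{every} $c\in M_d$. Then choose $c$ so that the bounded Fatou components of $q_c$ are bounded by pairwise disjoint Jordan curves (the paper takes $c\in\partial M_d$ parabolic), and transfer this to the periodic components of $f_\lambda$. For these non-hyperbolic maps, local connectivity comes from \cite[Corollary 1.10]{ARS22}. It does not come from the corollaries of \cite{BFR15}, which concern hyperbolic maps.

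Second, consider the case where your argument does work: capture into a strictly preperiodic component of the persistent fixed attracting basin, followed by Theorem~\ref{thm:carpet-hyper}, exactly as in Corollary~\ref{cor:carpet-hole}. The resulting parameter is $J$-stable, because $c_\lambda$ is captured and the other critical value stays in the persistent basin, so all singular values are passive nearby. Hence your parameters lie in $\Lambda\setminus\Bif(f_\lambda)$, and you only show that carpet parameters accumulate at every point of $\Bif(f_\lambda)$.

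The paper's parabolic choice of $c$ puts its carpet parameters $\lambda(c)$ inside $\Bif(f_\lambda)$ itself, which is why it remarks that the maps obtained are parabolic. The price is that, in the second alternative of (c), it must rule out contact between the new parabolic cycle and the fixed basin $U_0$. It does this with a maximum-principle argument on $K(g)\cap\partial U_0$, a step your capture route avoids but which the statement as proved needs.

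A minor point: the Picard hypothesis is on $c_\lambda$, not on $f_\lambda(c_\lambda)$. It is what guarantees a preimage of $c_\lambda$, giving the second holomorphically moving target needed for Montel.
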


For the definition of natural family, see \cite{ABF23} or \S\ref{sec:natural-family}.
For any finite type entire function $f$, according to Eremenko and Lyubich \cite{EL92}, $f$ can be embedded in a finite dimensional complex manifold $M$ of dimension $\sharp S(f)+2$, where $S(f)\subset\C$  is the set of singular values of $f$.
Natural family can be viewed as subfamilies in this parameter space $M$.
According to \cite[Theorem 2.6]{ABF23}, a holomorphic family $(f_\lambda)_{\lambda\in\Lambda}$ of finite type entire functions is locally a natural family if $S(f_\lambda)$ and $f_\lambda^{-1}(S(f_\lambda))$ move holomorphically.
Hence it is easy to check that many simple families of finite type entire functions are natural, such as $f_\lambda(z)= \lambda \sin^d z$ etc, where $d\geq 1$.

\medskip
To show that a transcendental Julia set $J(f)$ is a Sierpi\'{n}ski carpet, it suffices to prove that $J(f)$ is locally connected and all Fatou components of $f$ are bounded by pairwise disjoint Jordan curves (see Lemma \ref{lem:carpet-criterion}).
The local connectivity of Julia sets of transcendental entire functions has been studied extensively recently, by constructing suitable expanding metrics near Julia sets. See \cite{Osb13}, \cite{BFR15}, \cite{ARS22}, \cite{Par22}, \cite{YZZ25}, \cite{QW25} and the references therein (see also \cite{BFJK25} for the study of local connectivity of Julia sets of transcendental meromorphic functions).
To prove Theorem \ref{thm:carpet-dense}, one of the main ingredients is to show that the Fatou components are bounded by pairwise disjoint Jordan curves.
For this, we will use polynomial-like renormalization theory to study the distribution of the (generalized) Mandelbrot set in the parameter space (based on the work of McMullen \cite{McM00b} and Astorg-Benini-Fagella \cite{ABF26}) and ``arrange" some unicritical polynomial dynamics in the Julia sets of transcendental entire functions such that they are Sierpi\'{n}ski carpets.

The transcendental entire functions with carpet Julia sets constructed in Theorem \ref{thm:carpet-dense} are all parabolic. In fact, we can also prove the following result.

\begin{thm}\label{thm:carpet-Siegel}
There are transcendental entire functions having Siegel disks whose Julia sets are Sierpi\'{n}ski carpets.
\end{thm}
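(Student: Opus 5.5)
We propose to produce the example inside the family $f_\lambda(z)=\lambda\sin^d z$ (or, more flexibly, a two-parameter natural family $f_{a,b}(z)=a\sin^d z+b$ or $f(z)=\lambda\sin^2 z + \mu z$-type perturbations), using the same two ingredients as in the proof of Theorem~\ref{thm:carpet-dense}: local connectivity from the expanding-metric results of \cite{BFR15} and related work, and the criterion of Lemma~\ref{lem:carpet-criterion}, which reduces the carpet property to Fatou components being bounded by pairwise disjoint Jordan curves. The orbit of $0$ for $\lambda\sin^d z$ is the natural place for the Siegel disk. The critical points $k\pi$ are all mapped to $0$, and the critical points $\pi/2+k\pi$ are mapped to $\lambda$. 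So $0$ is a superattracting fixed point and cannot be the Siegel centre. We therefore pass to a family where a fixed point has multiplier $e^{2\pi\ii\theta}$, with $\theta$ of bounded type, and there is a second free critical value that we control.

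First, we choose a two-parameter family $g_{\mu,\lambda}$ of finite type without asymptotic values. A concrete choice is $g(z)=e^{2\pi\ii\theta}\,\sin z\cdot h(z)$, normalized so that $0$ is a fixed point with rotation multiplier. We need two free critical values $v_1,v_2$, with all other critical values landing on them or on $0$; symmetry of $\sin$ is used to keep the number of singular values small. With $\theta$ of bounded type, the Siegel disk $\Delta$ exists. Following the Douady--Herman--Świątek and Zhang approach to bounded type Siegel disks of finite type entire functions, we show that one critical orbit accumulates on $\partial\Delta$. We also show that $\partial\Delta$ is a quasicircle passing through a critical point. Second, we use the remaining free parameter to put the other critical value into a "Sierpiński hole" regime. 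Concretely, we arrange that it lies in a strictly preperiodic preimage of $\Delta$, or in an attracting basin of a separate cycle, but not in an immediate component. This makes the Fatou components on which critical points of that orbit lie map with degree one onto their images. It also makes every Fatou component a univalent or bounded-degree pullback of either $\Delta$ or a basin component.

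Third, we verify local connectivity of $J(g)$. Off the closure of the postsingular set we use an orbifold/hyperbolic expanding metric as in \cite{BFR15}. Near $\partial\Delta$ we use the quasicircle structure and puzzle/renormalization-free arguments for bounded type rotation domains. This step, the local connectivity near the Siegel boundary for a transcendental map, is where we expect the main obstacle. We would first try to adapt the existing results on local connectivity of transcendental Julia sets with Siegel disks of bounded type, e.g.\ \cite{YZZ25}, \cite{QW25}, which handle exactly post-critically controlled situations with a bounded type Siegel disk.

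Fourth, we show that all Fatou components are Jordan domains with pairwise disjoint closures. $\partial\Delta$ is a Jordan curve, and the other periodic components are handled as in Theorem~\ref{thm:carpet-hyper}. Preimages are Jordan domains because each pullback is proper of bounded degree with at most one critical value inside. Disjointness of boundaries reduces, by pulling back, to the periodic components. Two periodic boundaries could touch only at a point whose forward orbit meets $\partial\Delta$. Such a point has to be a critical point on $\partial\Delta$, and the choice in the second step ensures no other Fatou component attaches there, since the relevant critical point on $\partial\Delta$ has its other local sectors in $J(g)$ and not in a basin. We would check this last point by a local degree count at the critical point on $\partial\Delta$, which is again where some care is needed.
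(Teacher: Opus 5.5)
Your approach fails at the point you flagged in the fourth step, and no choice of the second critical value can fix it. You take $\theta$ of bounded type so that $\partial\Delta$ is a quasicircle through a critical point $c$, of local degree $m\geq 2$ say. Take a small Jordan neighbourhood $N$ of $g(c)$ with $N\cap\Delta$ connected. The component of $g^{-1}(N)$ containing $c$ then meets $g^{-1}(\Delta)$ in $m$ pieces. These pieces accumulate at $c$, each maps conformally onto $N\cap\Delta$, and each lies in a single Fatou component because $g^{-1}(\Delta)\subset F(g)$. Since $g|_{\Delta}$ is injective and $c\in\partial\Delta$, exactly one piece lies in $\Delta$. The other $m-1$ lie in Fatou components $\Delta'\neq\Delta$ with $g(\Delta')=\Delta$, and $c\in\partial\Delta\cap\partial\Delta'$. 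So your claim that the other local sectors at $c$ lie in $J(g)$ is false; the local degree count gives the opposite. A Julia set with a critical point on the boundary of a Siegel disk is never a Sierpi\'{n}ski carpet, which is why the paper notes that the bounded type examples of \cite{YZZ25} are not carpets. Likewise, your reduction of boundary disjointness to the periodic components needs, as in the proof of Theorem~\ref{thm:carpet-hyper}, that no periodic Fatou component has a critical point on its boundary.

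The missing idea is to choose the rotation number so that $\partial\Delta$ is a Jordan curve avoiding every critical point. The paper takes $\alpha\in\HT_N$ Brjuno but not of Herman type. Theorem~\ref{thm:hairy-cycle} then gives a Jordan curve $\partial\Delta_\alpha$ not containing the critical point, with postcritical set disjoint from the strict preimages of $\overline{\Delta}_\alpha$. This quadratic dynamics is planted in $\lambda\sin z$ by the polynomial-like renormalization of Theorem~\ref{thm-univ}. Oddness of $\sin$ gives a symmetric second Siegel disk $-\Delta$ renormalizing the other critical value $-\lambda_0$. Hence the Fatou set contains no critical values, and every Fatou component is a bounded Jordan domain by \cite[Proposition 2.9]{BFR15}. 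Local connectivity then does not need an expanding metric near $\partial\Delta$. It follows from the Bergweiler--Morosawa shrinking lemma (Lemma~\ref{lem:semi-hyperbolic}), applied to univalent pullbacks of a Jordan neighbourhood of $\overline{\Delta}$, together with Whyburn's criterion (Lemma~\ref{lem:LC-criterion}). Disjointness of boundaries then follows as in Theorem~\ref{thm:carpet-hyper}, since $\partial(\pm\Delta)$ carries no critical point and $\partial\Delta\cap\partial(-\Delta)=\emptyset$.
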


It was known that all finite type entire functions have neither Baker domains nor wandering domains (see \cite{GK86}, \cite{EL92}).
If a transcendental entire function $f$ has an unbounded Fatou component, then $J(f)$ cannot be locally connected (see \cite{BD00b}, \cite{Osb13}).
Hence if $f$ has a Baker domain, then $J(f)$ cannot be a Sierpi\'{n}ski carpet.

\begin{thm}\label{thm:carpet-wandering}
There are transcendental entire functions having wandering domains whose Julia sets are Sierpi\'{n}ski carpets.
\end{thm}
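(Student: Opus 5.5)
The plan is to build $f$ by lifting a carefully chosen map under the exponential, since this is the standard way to produce wandering domains that are bounded and well controlled. I would start from a hyperbolic finite type entire function $g$ whose Julia set is a Sierpi\'{n}ski carpet and which satisfies $g(w+2\pi\ii)=g(w)+2\pi\ii$, or, more naturally, is semiconjugate to a periodic map. Concretely, I would take $g(z)=z+\lambda\sin^d z+2\pi$ or, better, $f(z)=g(z)+2\pi$ with $g$ being $2\pi$-periodic up to translation, for example $g(z)=z-\lambda\sin^d z$ type maps. The requirement is that $g$ commutes with $z\mapsto z+2\pi$ and that its projection $h$ under $e^{\ii z}$ is a hyperbolic map on $\C^*$. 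Setting $f=g+2\pi$, the Fatou components of $g$ that are invariant, say attracting basins around $k\pi$, become wandering domains of $f$: a component $U$ satisfies $f^n(U)=U+2\pi n$, and these are pairwise distinct. The Julia sets coincide, $J(f)=J(g)$, because $f$ and $g$ are conjugate to the same projected map $h$ up to deck translations, and $J$ is determined by $h$ (Bergweiler's lifting argument).

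The steps in order are as follows. First, fix the explicit family, for instance $g(z)=z+\varepsilon\sin z$ or $g(z)=z+\sin^2 z$-type perturbations. For $f(z)=z+2\pi+a\sin^d z$ with suitable $a$ and $d\ge2$, the points $k\pi$ are superattracting for $g$ (or for $f$ modulo $2\pi$). Second, verify that the projected map $h$ on $\C^*$ is hyperbolic, meaning that all critical orbits are attracted to the projected superattracting cycle. I would choose the parameters so that every critical value lands in the basin component of $k\pi$ but not in the immediate one, mimicking Corollary \ref{cor:carpet-hole}. Third, prove that $J(f)$ is locally connected by transferring the expanding-metric argument used for Theorem \ref{thm:carpet-hyper}, namely \cite[Corollary 1.9]{BFR15}, to $g$. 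This needs care, because $g$ is not in class $\MB$ in general; the cleanest route is to work with the hyperbolic map $h$ on $\C^*$ and pull local connectivity back through $\exp$, which is a local homeomorphism. Fourth, show that every Fatou component is a bounded Jordan domain and that their closures are pairwise disjoint, using the criterion Lemma \ref{lem:carpet-criterion}. Boundedness holds because the immediate basins are bounded and preimages under hyperbolic dynamics with no asymptotic values stay bounded. The Jordan property follows from local connectivity and the absence of critical points on the boundaries. Disjointness of boundaries follows because all critical values lie in strictly preperiodic components, so no two components can touch; this is the same argument as in the proof of Theorem \ref{thm:carpet-hyper}(b).

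The main obstacle is the third step: the map $f$ has a wandering domain, so it is not hyperbolic in the sense of $\MB$. Moreover, $f$ may not even have bounded singular set, since its critical values are spread along the translates by $2\pi\Z$. I would therefore handle $f$ only through $g$ and the quotient $h$. The key facts to verify are that $J(f)=J(g)$ and that $g$ is hyperbolic in an appropriate periodic sense. That second fact means the postsingular set modulo $2\pi$ is compact in the Fatou set, which lets the BFR15 orbifold-metric construction go through in the quotient cylinder.
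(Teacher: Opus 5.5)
Your overall strategy is the one the paper uses: lift a hyperbolic self-map $h$ of $\C^*$ through $z\mapsto e^{\ii z}$, use Bergweiler's result $J(f)=\exp^{-1}(J(h))$, and add $2n\pi$ so that bounded invariant components become wandering domains. The gap is that you never produce a map for which the needed hypotheses hold, and the candidates you name fail.

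For $g(z)=z+a\sin^d z$ with $d\ge 2$ one has $g(k\pi+t)=k\pi+t+(-1)^{kd}a\,t^d+O(t^{d+2})$, so $g'(k\pi)=1$. The points $k\pi$ are therefore parabolic, not superattracting, and the quotient is not hyperbolic. For $d\ge 3$, the $d-1$ attracting petals at $k\pi$ already give distinct Fatou components sharing the boundary point $k\pi$. The choices $z-\sin z$ or $z+\varepsilon\sin z$ ($0<\varepsilon<1$) have a different defect: the real line carries only one attracting fixed point per period $2\pi$, so consecutive lifted basins touch at a repelling fixed point. For $z-\sin z$, the basins of $0$ and $2\pi$ contain $(-\pi,\pi)$ and $(\pi,3\pi)$ and share the boundary point $\pi$. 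This is the case $\lambda=\pm1$ in the paper's Remark after Lemma~\ref{lem:puncture}.

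What is actually missing is hypothesis (b) of Theorem~\ref{thm:carpet-hyper} for the lifted map: the immediate basins and all their $2\pi$-translates must have pairwise disjoint closures. Your justification, that all critical values lie in strictly preperiodic components, cannot hold for every critical value, since each immediate attracting basin contains a singular value. In any case, the pull-back argument in the proof of Theorem~\ref{thm:carpet-hyper} only propagates disjointness from the immediate basins to their preimages; it does not create it. Indeed, $z-\sin z$ has a hyperbolic quotient and no critical points on $J$, yet adjacent immediate basins touch.

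The paper obtains the separation from a specific parameter. Take $z+\lambda\sin z+2n\pi$ with $\lambda=\sqrt{1+4k^2\pi^2}$, $k\neq 0$. Its quotient $g_\lambda(w)=we^{\frac12\lambda(w-1/w)}$ has both critical points as superattracting fixed points $z_+$ and $z_-=\overline{z_+}$ on $\T\setminus\{\pm1\}$, hence is hyperbolic. On $\T$ it acts by $\theta\mapsto\theta+\lambda\sin\theta$, so the repelling fixed points $1$ and $-1$ lie in $J$, one on each arc of $\T\setminus\{z_+,z_-\}$. The symmetry $w\mapsto 1/\overline{w}$ together with the maximum principle then gives $\partial U_+\cap\partial U_-=\emptyset$. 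Lifting yields the two families $\{\widetilde U_m^{\pm}\}$ of bounded Jordan Fatou components with disjoint closures, which $f_{n,\lambda}$ translates into wandering domains. Without a concrete choice of this kind and such a separation argument, your outline does not prove the theorem.
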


Baker gave the first transcendental entire functions having wandering domains in 1970s and Herman gave the first simply connected wandering domains (see \cite[p.\,564]{Bak84}). To prove Theorem \ref{thm:carpet-wandering}, the wandering domains are necessarily simply connected and bounded. It was known that there are wandering domains in the family
\begin{equation}
f_\lambda(z)=z+\lambda\sin z+2n\pi,
\end{equation}
where $n\in\Z$ and $\lambda\in\C\setminus\{0\}$, based on the logarithmic lift of the self-dynamics of $\C\setminus\{0\}$ to that of $\C$ (see \cite[p.\,569]{Bak84}, \cite{FH09}). We shall prove that $J(f_\lambda)$ are Sierpi\'{n}ski carpets for certain $\lambda$ and any $n\in\Z$ (see Theorem \ref{thm:carpet-lift}).
For the classification of simply connected wandering domains, see \cite{BEFRS22}.

\medskip
This paper is organized as following:
In \S\ref{sec:natural-family}, we recall some basic definitions related to holomorphic family of entire functions, including $J$-stability, bifurcation locus, natural family and state the universality of the Mandelbrot sets in the parameter spaces.
In \S\ref{sec:carpet-hole}, we prove Theorem \ref{thm:carpet-hyper} based on Bergweiler-Fagella-Rempe's result and apply Theorem \ref{thm:carpet-hyper} to prove Corollary \ref{cor:carpet-hole}.
In \S\ref{sec:density}, we combine the universality of the Mandelbrot set in some transcendental families and results on the local connectivity to prove the density of carpet parameters of the families in Theorem \ref{thm:carpet-dense}.
In \S\ref{sec:carpet-Siegel}, we prove Theorem \ref{thm:carpet-Siegel} by constructing transcendental carpet Julia sets with Siegel disks based on some results of hairy Siegel disks of quadratic polynomials.
In \S\ref{sec:carpet-wandering}, we construct transcendental carpet Julia sets with wandering domains by applying the logarithmic lift construction and prove Theorem \ref{thm:carpet-wandering}.

\medskip
\noindent\textbf{Acknowledgements.}
The second author would like to thank N\'{u}ria Fagella and Anna Jov\'{e} for helpful explanations on Baker domains during a conference in Isaac Newton Institute for Mathematical Sciences in July 2026.
This work was supported by NSFC (Grant No.\,12571093).

\section{Mandelbrot copies in transcendental bifurcations}\label{sec:natural-family}

In this section, we give the definitions of ${J}$-stability, bifurcation locus, naturally family and state the universality of the Mandelbrot sets in the parameter spaces in some finite type entire families. Most contents can be found in \cite{DH85b}, \cite{McM94b}, \cite{McM00b}, \cite{ABF23} and \cite{ABF26}.

\subsection{${J}$-stability}
We first give the definitions of holomorphic motion and $J$-stability and then state some equivalent characterizations of $J$-stability for natural families of finite type entire functions.

\begin{defi}[{Holomorphic motion}]
Let $X$ be a subset of $\EC$, a map $h:\Lambda\times X\rightarrow\EC$ is called a \textit{holomorphic motion} of $X$ parameterized by the connected complex manifold $\Lambda$ and with base point $\lambda_0$ if
\begin{enumerate}
\item for every $z\in X$, $\lambda\mapsto h_\lambda(z)=h(\lambda,z)$ is holomorphic for $\lambda\in\Lambda$;
\item for every $\lambda\in\Lambda$, $z\mapsto h_\lambda(z)=h(\lambda,z)$ is injective on $X$; and
\item $h_{\lambda_0}(z)=h(\lambda_0,z)=z$ for all $z\in X$.
\end{enumerate}
\end{defi}

A basic fact about holomorphic motions is the \textit{$\lambda$-lemma}: any holomorphic motion $h:\Lambda\times X\rightarrow\EC$ can be extended to a holomorphic motion of its closure $h:\Lambda\times \overline{X}\rightarrow\EC$ (see \cite{MSS83}, \cite{Lyu83b}).

\begin{defi}[${J}$-stability]
Let $(f_\lambda)_{\lambda\in\Lambda}$ be a holomorphic family of transcendental entire functions, where $\Lambda$ is a connected complex manifold.
Given $\lambda_0\in \Lambda$, the map $f_{\lambda_0}$ is $J$-\textit{stable} if there exists a neighbourhood $U\subset \Lambda$ of $\lambda_0$ over which the Julia sets $J_\lambda:=J(f_\lambda)$ move holomorphically, i.e., there is a holomorphic motion $h: U\times J_{\lambda_0}\to\C$ such that $h_\lambda(J_{\lambda_0})=J_\lambda$ and $h_\lambda\circ f_{\lambda_0}= f_{\lambda}\circ h_\lambda$ holds on the Julia set $J_{\lambda_0}$.
\end{defi}

\begin{defi}[Natural family]
A holomorphic family $\{f_\lambda\}_{\lambda\in\Lambda}$ of finite type entire functions is a \emph{natural family} if it has the form $f_\lambda=\varphi_\lambda \circ f_{\lambda_0}\circ \psi^{-1}_\lambda$, where $f_{\lambda_0}$ is a finite type entire function, and $\varphi_\lambda,\psi_\lambda:\EC\to\EC$ are quasiconformal homeomorphisms depending holomorphically on $\lambda \in\Lambda$ with $\varphi_\lambda(\infty)=\psi_\lambda(\infty)=\infty$.
\end{defi}

Let $\{f_\lambda\}_{\lambda\in\Lambda}$ be a holomorphic family of finite type entire functions and $v_\lambda$ (resp., $c_\lambda$) be a singular value (resp., a critical point) of $f_\lambda$ depending holomorphically on $\lambda$ near some $\lambda_0 \in\Lambda$. We say that $v_\lambda$ (resp., $c_\lambda$) is \textit{passive} at $\lambda_0$ if there exists a neighborhood $U$ of $\lambda_0$ in $\Lambda$ such that the family $\{\lambda\mapsto f_\lambda^{\circ n}(v_\lambda)\}_{n \in \N}$ is normal on $U$. We say that $v_\lambda$ (resp., $c_\lambda$) is \textit{active} if it is not passive.

For a natural family $\{f_\lambda\}_{\lambda\in\Lambda}$, the critical points and singular values move holomorphically since $\psi_\lambda$ maps the critical points of $f_{\lambda_0}$ to those of $f_\lambda$, and $\varphi_\lambda$ maps the singular values of $f_{\lambda_0}$ to those of $f_\lambda$.

The following equivalent characterizations of $J$-stability of natural family of finite type entire functions (actually proved for meromorphic functions) was proved by Astorg, Benini and Fagella (see \cite[Theorem E]{ABF23}).

\begin{thm}[{Characterizations of $J$-stability}]\label{thm:J-stability}
Let $\left\{f_\lambda\right\}_{\lambda \in\Lambda}$ be a natural family of finite type entire functions. Let $U \subset \Lambda$ be a simply connected domain. The following are equivalent:
\begin{enumerate}
\item the Julia set moves holomorphically over $U$ (i.e. $f_\lambda$ is $J$-stable for all $\lambda \in U$);
\item every singular value is passive on $U$;
\item the maximal period of attracting cycles is bounded on $U$;
\item the number of attracting cycles is constant in $U$;
\item for all $\lambda \in U$, $f_\lambda$ has no non-persistent parabolic cycles.
\end{enumerate}
\end{thm}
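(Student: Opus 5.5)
This theorem is quoted from Astorg--Benini--Fagella \cite[Theorem E]{ABF23}; the paper does not reprove it. If I had to give a proof, I would follow the Mañé--Sad--Sullivan / Lyubich scheme adapted to finite type maps, using that in a natural family the singular values $v_{j,\lambda}=\varphi_\lambda(v_{j,\lambda_0})$ move holomorphically. There are finitely many of them, say $q$. I would prove the cycle of implications (a)$\Rightarrow$(b)$\Rightarrow$(e)$\Rightarrow$(d)$\Rightarrow$(c)$\Rightarrow$(b)$\Rightarrow$(a).

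\textbf{(a)$\Rightarrow$(b).} Take a repelling periodic point of period at least $3$ of $f_{\lambda_0}$. Its holomorphic motion, together with some of its preimages, gives three holomorphically moving points $a_i(\lambda)\in J_\lambda$ that stay distinct. The holomorphic motion $h_\lambda$ of $J_{\lambda_0}$ conjugates the dynamics and preserves $J_\lambda$. If $v_\lambda\in J_\lambda$ persistently, its orbit moves inside $J_\lambda$ and is passive. Otherwise the orbit of $v_\lambda$ lies in the Fatou set and avoids the moving preimages $f_\lambda^{-n}(a_i(\lambda))$, which are disjoint from it. Montel's theorem, applied after a Möbius normalisation sending $a_1,a_2,a_3$ to $0,1,\infty$, then gives normality of $\{\lambda\mapsto f^{n}_\lambda(v_\lambda)\}$.

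\textbf{(b)$\Rightarrow$(e).} A non-persistent parabolic cycle at $\lambda_1$ perturbs to an attracting cycle at nearby parameters. By the finite-type classification of Fatou components, that attracting basin must attract some singular value $v_\lambda$. Its multiplier then depends holomorphically on $\lambda$, and the attracting cycle is captured by one of the finitely many $v_j$. Near $\lambda_1$ the iterates $f_\lambda^{np}(v_\lambda)$ converge to the attracting cycle on one side of $\lambda_1$ but not on a full neighborhood. Because the cycle becomes parabolic, a Hurwitz-type argument on the equation $f^{np}_\lambda(v_\lambda)=z_\lambda$ contradicts normality.

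\textbf{(e)$\Rightarrow$(d)$\Rightarrow$(c).} Each attracting cycle attracts a singular value, so there are at most $q$ attracting cycles. The multiplier of each is a holomorphic function of $\lambda$ on a suitable cover, and a cycle can only disappear by its multiplier reaching modulus $1$ or by colliding with another cycle. Collision forces a multiplier $1$, hence a parabolic cycle, excluded by (e). A cycle reaching modulus $1$ with non-constant multiplier forces a non-persistent parabolic or irrationally indifferent cycle. Irrationally indifferent cycles sit next to parameters with non-persistent parabolic ones, since the multiplier is open, so this is excluded too. Hence the number of attracting cycles is locally constant, and so constant because $U$ is connected. Their periods then vary continuously and discretely, so they are bounded on compact subsets, which is (c).

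\textbf{(c)$\Rightarrow$(b) and (b)$\Rightarrow$(a).} These carry the real content.
\begin{itemize}
\item For (c)$\Rightarrow$(b): suppose $v_\lambda$ is active at $\lambda_1$. Using three moving repelling points as above, Montel's theorem produces parameters near $\lambda_1$ where $f^n_\lambda(v_\lambda)=p_\lambda$ for some moving preperiodic point $p_\lambda$. A standard perturbation then shows that parameters where $v_\lambda$ is periodic, hence superattracting when $v_\lambda$ is a critical value, accumulate at $\lambda_1$ with unbounded periods. When $v_\lambda$ is asymptotic one uses a different perturbation, so the asymptotic-value case needs separate care.
\item For (b)$\Rightarrow$(a): the density of repelling cycles in $J$ lets us follow repelling cycles as long as they do not become non-repelling, which passivity of singular values prevents via (e). One then extends the motion by the $\lambda$-lemma to the closure, which is all of $J$.
\end{itemize}

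The hardest step is the transcendental analogue of the MSS argument, namely that passive singular values prevent repelling cycles from changing type. A multiplier hitting the unit circle needs a singular value in its basin or in the boundary of its rotation domain, and the finite-type hypothesis is exactly what makes this available. Simple connectivity of $U$ is used so that the motions of cycles have no monodromy.
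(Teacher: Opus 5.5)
The paper gives no proof of this statement. It is imported verbatim from \cite[Theorem E]{ABF23}, so your opening sentence is exactly the paper's position. The Ma\~n\'e--Sad--Sullivan outline you add, however, would not stand as a proof, and it breaks where finite type entire maps differ from rational maps.

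The decisive gap is (c)$\Rightarrow$(b) for an active \emph{asymptotic} value, which you explicitly leave open. A periodic asymptotic value lies on a cycle with no reason to be attracting, and it may not even be able to be periodic. In the natural family $\lambda e^z$ the only singular value $0$ is omitted, so it is never periodic. Yet every active parameter must still be shown to be accumulated by attracting cycles of unbounded period. You need a genuinely different way of producing attracting cycles from an active asymptotic value, for instance steering $f_\lambda^{\circ n}(v_\lambda)$ deep into a logarithmic tract over $v_\lambda$, where $|f_\lambda'|$ can be made small, and then closing up a cycle. This is the substantive content of the cited theorem, and your proposal does not supply it. Even in the critical case your statement is off: $v_\lambda=f_\lambda(c_\lambda)$ being periodic does not give a superattracting cycle. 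You need $c_\lambda$ itself periodic, i.e.\ Montel applied to $c_\lambda$ and two of its preimages. That again fails when $c_\lambda$ is omitted, as for $\lambda e^{z^2}$; by Iversen's theorem this happens only in the presence of asymptotic values.

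A second gap, specific to transcendental maps, is in your continuation arguments for (e)$\Rightarrow$(d) and (b)$\Rightarrow$(a). You let cycles disappear only by colliding or by their multiplier crossing the unit circle. For entire functions a cycle can a priori also leave every compact set while $\lambda$ stays in $U$, and you must exclude this using the natural-family structure $f_\lambda=\varphi_\lambda\circ f_{\lambda_0}\circ\psi_\lambda^{-1}$. If a cycle point tends to $\infty$ along a parameter path while its image stays bounded, the limit of the image is an asymptotic value, whose finite forward orbit cannot return to $\infty$. If all cycle points tend to $\infty$, the Eremenko--Lyubich estimate forces the multiplier to $\infty$. That rules out escaping attracting cycles but not escaping repelling ones, so following all repelling cycles over $U$ in (b)$\Rightarrow$(a) needs an argument you do not give.

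Finally, your (a)$\Rightarrow$(b) is unjustified. ``$v_\lambda\in J_\lambda$ persistently'' does not imply that its orbit follows the motion $h_\lambda$. Your case split also ignores singular values that lie in $J_\lambda$ only for some $\lambda$. It is simpler to prove (a)$\Rightarrow$(e). Every parabolic point of $f_{\lambda_1}$ has the form $h_{\lambda_1}(w)$ with $w\in J_{\lambda_0}$ periodic. Then $h_\lambda(w)$ is periodic of the same period for all $\lambda\in U$, with a holomorphic multiplier of modulus $\geq 1$. By the minimum modulus principle that multiplier is constant, so no parabolic cycle is non-persistent.
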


The \textit{bifurcation locus} of the holomorphic family $(f_\lambda)_{\lambda\in\Lambda}$ of entire functions is defined as
\begin{equation}
\Bif(f_\lambda) := \{\lambda_0\in \Lambda \mid f_{\lambda_0} \text{ is not } J\text{-stable}\}.
\end{equation}
In view of Theorem \ref{thm:J-stability}, if $(f_\lambda)_{\lambda\in\Lambda}$ is a natural family of finite type entire functions,  it makes sense to define the bifurcation locus $\operatorname{Bif}(f_\lambda)$ as the set of parameters $\lambda$ for which any of the five conditions in Theorem \ref{thm:J-stability} is not satisfied.

According to \cite{EL92}, $J$-stable parameters of the natural family $(f_\lambda)_{\lambda\in\Lambda}$ of finite type entire functions form an open and dense set in $\Lambda$.  This is well known for rational maps by \cite{MSS83} and is also true for finite type meromorphic functions (see \cite[Corollary F]{ABF23}).
Hence $\Bif(f_\lambda)$ is a closed and nowhere dense subset of $\Lambda$.

\subsection{Polynomial-like maps}

Let $U$ and $V$ be two Jordan disks in $\C$ such that $U$ is compactly contained in $V$. The map $g:U\to V$ is called a \textit{polynomial-like map} of degree $d\geq 2$ if $g$ is a proper holomorphic surjection with degree $d$. The sets
\begin{equation}
K(g):=\bigcap\nolimits_{n\geq 0}g^{-n}(V) \text{\quad and\quad} J(g):=\partial K(g)
\end{equation}
are the \emph{filled Julia set} and the \textit{Julia set} of the polynomial-like map respectively.
Two polynomial-like maps $g_1:U_1\to V_1$ and $g_2:U_2\to V_2$ are said to be \emph{hybrid equivalent} if there is a quasiconformal mapping $h$ defined from a neighborhood of $K(g_1)$ onto that of $K(g_2)$, which conjugates $g_1$ to $g_2$ and the complex dilatation of $h$ on $K(g_1)$ is zero.
The following result is due to Douady and Hubbard (see \cite[p.\,296]{DH85b}).

\begin{thm}[Straightening Theorem]\label{thm:straightening}
Let $g:U\to V$ be a polynomial-like map of degree $d\geq 2$. Then $g:U\to V$ is hybrid equivalent to a polynomial $P$ with the same degree $d$. Moreover, if $K(g)$ is connected, then $P$ is uniquely determined up to a conjugation by an affine map.
\end{thm}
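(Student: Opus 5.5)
The plan is to follow the classical surgery argument of Douady and Hubbard: first build a quasiregular extension of $g$ to the whole plane that agrees with $z\mapsto z^d$ near infinity, then straighten it with the measurable Riemann mapping theorem. After shrinking $V$ slightly we may assume $\partial U$ and $\partial V$ are smooth (real-analytic) Jordan curves; this does not change $K(g)$. Choose $R>1$ and a conformal map $\phi:\EC\setminus\overline{V}\to\EC\setminus\overline{\mathbb{D}}_{R^d}$ with $\phi(\infty)=\infty$, extending smoothly to the boundary. On the annulus $A=\overline{V}\setminus U$, interpolate between $\phi^{-1}\circ(z\mapsto z^d)\circ\phi$ near $\partial V$ and $g$ on $\partial U$. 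To make this interpolation well defined, first pull $\partial V$ back by $z^d$ to get the curve $\{|w|=R\}$. The map $g:\partial U\to\partial V$ and the map $z^d:\{|w|=R\}\to\{|w|=R^d\}$ are both degree-$d$ coverings, so we can lift $\phi|_{\partial V}$ to a diffeomorphism $\partial U\to\{|w|=R\}$. We then extend this, together with $\phi$ on $\partial V$, to a diffeomorphism of the closed annulus $A$ onto $\{R\le|w|\le R^d\}$.

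This gives a map $\tilde g:\C\to\C$ with the following properties:
\begin{itemize}
\item $\tilde g=g$ on $U$;
\item $\tilde g$ is conjugate to $z^d$ outside $U$ via a quasiconformal extension $\Phi$ of $\phi$;
\item $\tilde g$ is quasiregular of degree $d$;
\item $\tilde g$ is holomorphic off the annulus $A$.
\end{itemize}

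Next, define an invariant Beltrami coefficient $\mu$ as follows:
\begin{itemize}
\item on $\C\setminus\overline{V}$, set $\mu$ to be the pullback of the standard structure under $\Phi$;
\item on $\tilde g^{-n}(A)$ for $n\ge0$, set $\mu=(\tilde g^{\circ n})^*\mu$;
\item on $K(g)$, set $\mu=0$.
\end{itemize}
The key point is that each orbit passes through the non-holomorphic region $A$ at most once, so $\|\mu\|_\infty\le k<1$, with $k$ equal to the dilatation of the interpolating diffeomorphism. By construction $\tilde g^*\mu=\mu$. The measurable Riemann mapping theorem then gives a quasiconformal homeomorphism $h:\C\to\C$ with $\bar\partial h=\mu\,\partial h$. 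The map $P=h\circ\tilde g\circ h^{-1}$ preserves the standard structure, so it is holomorphic on $\C$; it is proper of degree $d$ with a pole of order $d$ at $\infty$, hence a polynomial of degree $d$. Since $\mu=0$ on $K(g)$, the restriction of $h$ to a neighborhood of $K(g)$ is a hybrid equivalence between $g$ and $P$.

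For uniqueness, suppose $P_1$ and $P_2$ are hybrid equivalent degree-$d$ polynomials with connected filled Julia sets, via $h$. Outside the filled Julia sets, the B\"ottcher coordinates give a conformal conjugacy $B=\beta_2^{-1}\circ\beta_1:\C\setminus K(P_1)\to\C\setminus K(P_2)$; composing with a rotation by a $(d-1)$-th root of unity if needed, we may assume $B$ is correctly normalized. Define $H=h$ on $K(P_1)$ and $H=B$ off $K(P_1)$. We must show two things: that $H$ is a homeomorphism, and that it is quasiconformal on all of $\C$.

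For the first, note that $h$ and $B$ induce the same homotopy class of conjugacy near $K(P_1)$. One can therefore pull back an isotopy between $h$ and $B$ near $K(P_1)$ under the dynamics. This shows that $H$ is the limit of quasiconformal conjugacies with uniformly bounded dilatation, and in particular a homeomorphism. For the second, $H$ is then quasiconformal and conformal off $J(P_1)$. On $K(P_1)$ it has $\bar\partial H=0$ almost everywhere, because $h$ is hybrid there. Hence $\bar\partial H=0$ almost everywhere on $\C$, so $H$ is conformal on $\C$. Concretely, this is Bers' lemma: a homeomorphism that is quasiconformal off a closed set and conformal a.e. on that set is quasiconformal. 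Therefore $H$ is affine, and it conjugates $P_1$ to $P_2$.

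The main obstacle is the uniqueness step, namely gluing $h$ and $B$ across $J(P_1)$ into a global quasiconformal map. I would handle it with the isotopy-lifting argument above combined with Bers' lemma. The existence part is routine once the interpolation on $A$ is set up carefully.
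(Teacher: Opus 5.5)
Your proposal is correct and is essentially the classical Douady--Hubbard proof: quasiregular surgery plus the measurable Riemann mapping theorem for existence, and B\"ottcher gluing with a dynamically pulled-back interpolation for uniqueness. The paper gives no proof of its own and simply cites Douady--Hubbard, so this is the argument it relies on. One correction at the last step: the tool you want is not ``Bers' lemma'' as you phrased it. It is Rickman's lemma, or equivalently the fact that two quasiconformal maps agreeing on a measurable set have the same $\bar\partial$-derivative a.e.\ there, and this is what yields $\bar\partial H=\bar\partial h=0$ a.e.\ on $K(P_1)$.
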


Denote $q_c(z)=z^d+c$, where $d\geq2$. The generalized \textit{Mandelbrot set} (or Multibrot set) is defined as
\begin{equation}
M_d:=\{c\in\C: \{q_c^{\circ n}(0)\}_{n\in\N} \text{ is bounded}\}.
\end{equation}
The traditional \textit{Mandelbrot set} is the quadratic version $M_2$.
Based on introducing holomorphic family of polynomial-like maps, Douady and Hubbard gave a proof of the appearance of quasiconformal copies of Mandelbrot sets in the parameter spaces of various one-dimensional analytic families \cite{DH85b}.

\subsection{Universality of the Mandelbrot set}

By applying the theory of polynomial-like maps further, McMullen proved that the bifurcation of any holomorphic families of rational maps is either empty or contains the quasiconformal image of $\partial M_d$ for some $d\geq 2$ \cite{McM00b}.
The proof of McMullen is purely local and can be applied to some holomorphic families of transcendental meromorphic functions.
Indeed, based on the idea of the proof of \cite[Theorem 4.1]{McM00b}, Astorg, Benini and Fagella genealized McMullen's result to natural families of finite type meromorphic functions. In this paper, we only state their result for transcendental entire functions. See Lemma 7.4, Theorem 7.5 and Corollary 1.2 of \cite{ABF26}.

\begin{thm}[Universality of the Mandelbrot set]\label{thm-univ}
Let $(f_\lambda)_{\lambda\in\Lambda}$ be a natural family of finite type entire functions with a marked active critical point $c_\lambda$, where $\Lambda \subset \mathbb{C}$ is a domain. Suppose $c_{\lambda}$ is not persistently a Picard exceptional value.
Then for any $\lambda_0\in\Bif(f_\lambda)$ and any neighborhood $W$ of $\lambda_0$,
\begin{enumerate}
\item there exists a quasiconformal embedding $\chi:\partial M_d\to \Bif(f_\lambda)\cap W$ for some $d\geq 2$; and
\item there exists $n\geq 1$ such that for any $c \in M_d$, there exists a polynomial-like map $f_\lambda^{\circ n}: U \rightarrow V$ which is hybrid conjugate to $q_c(z)=z^d+c$, where $\lambda\in W$.
\end{enumerate}
\end{thm}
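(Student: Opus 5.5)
The plan is to follow McMullen's local argument \cite[Theorem 4.1]{McM00b}, adapted to natural families as in \cite{ABF26}, and to carry it out in four steps.

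\textbf{Step 1: find a superattracting parameter.} Fix $\lambda_0\in\Bif(f_\lambda)$ and a neighborhood $W$. Since $c_\lambda$ is active at $\lambda_0$, the family $\{\lambda\mapsto f_\lambda^{\circ n}(c_\lambda)\}_{n}$ is not normal on any neighborhood of $\lambda_0$. I will compare it with three holomorphically moving points:
\begin{itemize}
\item $c_\lambda$ itself;
\item two distinct preimages $a_\lambda,b_\lambda$ of $c_\lambda$ under $f_\lambda$.
\end{itemize}
These exist and move holomorphically on a small disk $D\subset W$. This is exactly where the hypothesis that $c_\lambda$ is not persistently a Picard exceptional value enters, together with the naturality of the family, since $\psi_\lambda$ transports preimages holomorphically. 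By Montel's theorem, the orbit maps cannot all avoid these three values on $D$. Hence there are $\lambda_1\in D$ and $n\geq 1$ with $f_{\lambda_1}^{\circ n}(c_{\lambda_1})=c_{\lambda_1}$, possibly after composing once more with $f_{\lambda_1}$. So $c_{\lambda_1}$ is a superattracting periodic point, and the equation $f_\lambda^{\circ n}(c_\lambda)=c_\lambda$ is non-persistent near $\lambda_1$, because $\lambda_1$ can be taken in $\Bif(f_\lambda)$, which is dense in $D\cap\Bif(f_\lambda)$.

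\textbf{Step 2: rescale to a polynomial-like family.} Near $(\lambda_1,c_{\lambda_1})$, conjugate $f_\lambda^{\circ n}$ by affine maps centered at $c_\lambda$. Then reparametrize $\lambda$ by a root $t^{1/k}$ adapted to the order $k$ of vanishing of $\lambda\mapsto f_\lambda^{\circ n}(c_\lambda)-c_\lambda$ at $\lambda_1$. The rescaled maps should converge, uniformly on compact sets, to $z\mapsto z^d+t$. Here $d$ is the local degree of $f_{\lambda_1}^{\circ n}$ at $c_{\lambda_1}$, which is bounded because critical multiplicities along the periodic orbit are finite.

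Next choose a large disk $V$, with $U$ the component of the preimage of $V$ containing $0$. Since $z^d+t$ is polynomial-like on such a pair, stability of proper maps under small perturbation gives a holomorphic family of degree-$d$ polynomial-like maps $g_t=f_\lambda^{\circ n}\colon U_t\to V_t$ over a parameter disk $\{|t|<R\}$ with $R$ large, hence over a small $\lambda$-disk $\Delta\subset W$.

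\textbf{Step 3: build the embedding.} Apply the Straightening Theorem \ref{thm:straightening} to each $g_t$. Following Douady--Hubbard \cite{DH85b}, the critical value of $g_t$ winds once around $V_t\setminus U_t$ as $t$ traverses $\partial\{|t|<R\}$. It follows that the straightening map $\Delta\supset M'\to M_d$ is a homeomorphism, where $M'$ is the connectedness locus, and a quasiconformal one by the $\lambda$-lemma applied to the motion of the external structure. Its inverse $\chi$ gives (b) immediately, since for each $c\in M_d$ the polynomial-like map $g_{\chi(c)}$ is hybrid conjugate to $q_c$.

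\textbf{Step 4: the boundary lands in the bifurcation locus.} For (a), note that for $\lambda\in\chi(\partial M_d)$ the critical orbit of $g_\lambda$ is active in every neighborhood, since parameters with bounded and with escaping critical orbits of $g$ accumulate there. Therefore $c_\lambda$ is active for $f_\lambda$, and so $\chi(\partial M_d)\subset\Bif(f_\lambda)\cap W$.

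\textbf{Main obstacle.} I expect the hardest step to be Step 2 in the transcendental setting. One has to ensure that the convergence to the model is genuinely uniform on a fixed-size disk, so that a polynomial-like restriction of fixed modulus persists over a parameter disk whose boundary is large in the $t$-coordinate. One also has to rule out additional critical points, or pieces of $f^{-n}(V)$ coming from near $\infty$, entering $U_t$.

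My first attempt would be to work entirely inside a small neighborhood of the finite periodic orbit of $c_{\lambda_1}$, where $f_\lambda$ is a uniformly controlled holomorphic map. I would use finiteness of the singular set, with its holomorphic motion in the natural family, to guarantee that no other critical values meet the relevant disks for $\lambda$ near $\lambda_1$.
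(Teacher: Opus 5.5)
\textbf{There is a genuine gap in Steps 1--2: you try to extract the polynomial-like family at a superattracting parameter, and the local rescaling there cannot work.}

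Suppose $f_{\lambda_1}^{\circ n}(c_{\lambda_1})=c_{\lambda_1}$ with local degree $d$. Then $F_\lambda(z):=f_\lambda^{\circ n}(c_\lambda+z)-c_\lambda=\delta(\lambda)+a(\lambda)z^d+O(z^{d+1})$ with $a(\lambda_1)\neq 0$. Conjugating by $z\mapsto sz$ gives $s^{-1}\delta(\lambda)+a(\lambda)s^{d-1}z^d+O(s^dz^{d+1})$. Making the leading coefficient equal to $1$ forces $s^{d-1}=1/a(\lambda)$, which is a fixed scale. So nothing drives the higher-order terms to zero, and reparametrizing $\lambda$ by $t^{1/k}$ only changes the constant term.

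Dynamically, a superattracting point maps small disks compactly into themselves. Hence no restriction with $U\Subset V=f_\lambda^{\circ n}(U)$ comes out of the local model. Whether a hyperbolic center sits inside a Mandelbrot copy is a global renormalization question, answered for quadratic polynomials by tuning via external rays; a local expansion cannot decide it.

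Your claim that $\lambda_1$ can be taken in $\Bif(f_\lambda)$ is also false. The attracting cycle persists near $\lambda_1$ and $c_\lambda$ stays in its immediate basin, so $c_\lambda$ is passive there. The ``main obstacle'' you flag is therefore not a uniformity issue: there is no convergence to $z^d+t$ at all.

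\textbf{The missing idea is the Misiurewicz bifurcation.} This is what the argument the paper relies on uses (McMullen's Theorem 4.1, adapted in \cite{ABF26}).

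Apply Montel to three holomorphically moving repelling periodic points, rather than to $c_\lambda$ and its preimages. This gives parameters in $W$, arbitrarily close to $\lambda_0$ and themselves in $\Bif(f_\lambda)$, at which $f_\lambda^{\circ l}(c_\lambda)$ lands non-persistently on a repelling periodic point $a_\lambda$ with multiplier $\rho$. The return map is then built in three legs:
\begin{enumerate}
\item $f_\lambda^{\circ l}$ sends a disk of radius $r$ about $c_\lambda$ to a set of size $\asymp r^d$ near $a_\lambda$;
\item $m$ turns around the repelling cycle enlarge it by $|\rho|^m$ to definite size;
\item a univalent inverse branch along a backward orbit of $c_\lambda$ converging to the cycle brings it back over a definite neighborhood of $c_\lambda$.
\end{enumerate}
Here $r\asymp|\rho|^{-m/d}\to 0$, and the parameter is rescaled correspondingly using the order of non-persistence. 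On the relevant scale the univalent pieces become affine, so the rescaled return maps genuinely converge to $z^d+t$ as $m\to\infty$. This yields the cascade of Mandelbrot-like families accumulating at the Misiurewicz parameter.

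This is also where the hypothesis that $c_\lambda$ is not persistently a Picard exceptional value really enters. It is needed for $c$ to be \emph{unramified}, i.e.\ to admit such a backward orbit without critical points, not merely to supply two preimages for Montel. Once you have these families, your Steps 3--4 (straightening, the winding argument, and activity along $\chi(\partial M_d)$) go through essentially as you describe.
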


Both of the proofs in \cite{McM00b} and \cite{ABF26} are based on \textit{Misiurewicz bifurcation}.
A parameter $\lambda\in\Lambda$ is a \textit{Misiurewicz point} for the pair $(f_\lambda, c_\lambda)$ if the forward orbit of the critical point $c_\lambda$ under $f_\lambda$ lands on a repelling periodic cycle. If the forward orbit of $c_\lambda$ does not coincide with any other critical points of $f_\lambda$, then the number $d$ in Theorem \ref{thm-univ} is the local degree of $c_\lambda$. A Misiurewicz bifurcation of degree $d\geq 2$ gives rise to a cascade of families of polynomial-like maps \(f_{\lambda}^{\circ n}: U_{0} \to U_{n}\), indexed by the return time \(n\) near a Misiurewicz point $\lambda_0$.

Misiurewicz bifurcation requires that the local degree of $f_\lambda$ at $c_\lambda$ is invariant and this is automatically satisfied by the definition of natural family, since the marked critical point $c_\lambda$ has constant degree. Another requirement of the Misiurewicz bifurcation is that the critical point $c_{\lambda_0}$ is \textit{unramified} for $f_{\lambda_0}$, which cannot be satisfied if $c_{\lambda_0}$ is a Picard exceptional value, where $\lambda_0$ is a Misiurewicz point. Hence in Theorem \ref{thm-univ}, $c_{\lambda}$ is required not to be persistently a Picard exceptional value, which may happen, for example, $f_\lambda(z)=\lambda e^{z^2}$, where $\lambda\in\C\setminus\{0\}$.

\section{Carpet Julia sets of hyperbolic functions}\label{sec:carpet-hole}

In this section, by using Bergweiler-Fagella-Rempe's criterion on the local connectivity of Julia sets of hyperbolic entire functions, we give the proofs of Theorem \ref{thm:carpet-hyper} and Corollary \ref{cor:carpet-hole}.

\subsection{Proof of Theorem \ref{thm:carpet-hyper}}

The Julia set of any transcendental entire function is an unbounded closed subset of $\C$. It is known that a continuum cannot fail to be locally connected only at a single point (see \cite[Corollary 5.13, p.\,78]{Nad92}). Therefore, the Julia set $J(f)$ of a transcendental entire function $f$ is locally connected in $\C$ if and only if $J(f)\cup\{\infty\}$ is locally connected in $\EC$.
In view of the local connectivity of the Julia set $J(f)$ of a transcendental entire function $f$ has been studied extensively recently, we use the following criterion to prove that $J(f)$ is a Sierpi\'{n}ski carpet.

\begin{lem}\label{lem:carpet-criterion}
Let $f$ be a transcendental entire function with non-empty Fatou set $F(f)$. Suppose
\begin{enumerate}
\item $J(f)$ is locally connected; and
\item the components of $F(f)$ are bounded by pairwise disjoint Jordan curves.
\end{enumerate}
Then $J(f)$ is a Sierpi\'{n}ski carpet.
\end{lem}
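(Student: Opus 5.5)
We check the defining properties of a Sierpi\'{n}ski carpet in \cite{Why58} one at a time for $S:=J(f)\cup\{\infty\}\subset\EC$. Compactness holds because $J(f)$ is closed in $\C$ and we add the point at infinity. Connectedness: $J(f)\cup\{\infty\}$ is connected for every transcendental entire $f$. The cleanest route here is to use (b). Each Fatou component is bounded by a Jordan curve, so it is a bounded Jordan domain $D$, and $\EC\setminus D$ is a closed disk containing $\infty$, hence connected. Then
\[
S=\EC\setminus\bigcup_{U}U
\]
is a decreasing intersection of the continua $\EC\setminus\bigcup_{i\le n}U_i$, taken along an enumeration of the Fatou components. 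Each such set is connected because the closures $\overline{U_i}$ are pairwise disjoint closed disks. A nested intersection of continua is a continuum. Local connectivity in $\EC$ follows from (a) and the remark before the lemma, namely that a continuum cannot fail to be locally connected at exactly one point \cite{Nad92}.

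Empty interior is standard. $J(f)$ is perfect and, if it had interior, then $J(f)=\C$ by Montel/blow-up, which contradicts $F(f)\neq\emptyset$. Also $\infty$ is not an interior point of $S$ in $\EC$, since otherwise $J(f)$ would contain a neighbourhood of infinity in $\C$. The complementary components of $S$ in $\EC$ are exactly the Fatou components of $f$. By (b) they are bounded by pairwise disjoint Jordan curves. In particular none of them is unbounded, because an unbounded component's boundary in $\EC$ would contain $\infty$ and could not be a Jordan curve in $\C$ disjoint from the others. Interpreting (b) in $\EC$ handles this.

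It remains to show that $F(f)$ has infinitely many components, which I expect to be the only step needing a real argument. Let $U$ be a Fatou component. If $F(f)$ had only finitely many components, all of them would be (pre)periodic and permuted by $f$. Consider $f^{-1}(U)$ for a component $U$ in a cycle. Since $f$ is transcendental, every point of $U$ has infinitely many preimages, except at most one Picard exceptional value. These preimages lie in the Fatou set. Each Fatou component $V$ with $f(V)\subset U$ is bounded, and $f|_V$ has finite degree onto $U$ because $V$ is a bounded Jordan domain and $f$ is proper on it. Hence $f^{-1}(U)$ must consist of infinitely many components. This contradiction shows there are infinitely many Fatou components, and the criterion of \cite{Why58} then gives that $J(f)$ is a Sierpi\'{n}ski carpet.
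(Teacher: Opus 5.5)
Your argument is correct in substance, but it takes a different route from the paper's.

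\emph{The paper's route.} The paper gets both the infinitude of Fatou components and the connectedness of $J(f)$ from hypothesis (a) together with the literature. Finitely many Fatou components would force an unbounded one. An unbounded Fatou component, or a disconnected Julia set, is incompatible with local connectivity of $J(f)$ \cite{BD00b}, \cite{Osb13}.

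\emph{Your route.} You read (b) as saying that every Fatou component is a bounded Jordan domain, and then argue elementarily. Connectedness comes from writing $J(f)\cup\{\infty\}$ as a nested intersection of the continua $\EC\setminus\bigcup_{i\le n}U_i$. Infinitely many components comes from Picard's theorem combined with the finite degree of $f$ on each bounded preimage component. This finite-degree observation is exactly what lies behind the paper's terse claim that finitely many components must be unbounded.

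\emph{Comparison.} Your version is self-contained and uses (a) only for local connectivity. The paper's version does not need boundedness to come from (b), because (a) together with the cited results already rules out unbounded components.

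There are two things to correct.

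First, your aside that $J(f)\cup\{\infty\}$ is connected for every transcendental entire $f$ is false. A multiply connected (Baker) wandering domain separates it. You do not actually use this claim, so it should simply be deleted.

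Second, the boundedness step needs an actual justification. If $\partial U$ is a Jordan curve $\gamma\subset\C$, then $U$ is either the interior or the exterior of $\gamma$. The exterior is ruled out because $J(f)$ is unbounded. Your remark that reading (b) in $\EC$ handles this points the wrong way. On the sphere, (b) still allows one Fatou component with $\infty$ on its Jordan boundary, and $f$ may have infinite degree on it. In that case your preimage count breaks down, although your nested-intersection argument for connectedness still works. Excluding that case requires (a) and the cited result, as in the paper.
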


\begin{proof}
In order to prove that $J(f)$ is a Sierpi\'{n}ski carpet, according to \cite{Why58}, it suffices to verify the following five properties:
\begin{enumerate}
\item[(1)] $J(f)\cup\{\infty\}$ is a compact subset of $\EC$;
\item[(2)] $J(f)$ has empty interior;
\item[(3)] $J(f)$ is connected;
\item[(4)] $J(f)$ is locally connected; and
\item[(5)] $F(f)$ consists of infinitely many components which are bounded by pairwise disjoint Jordan curves.
\end{enumerate}
If $f$ has only finitely many Fatou components, then iterating $f$ several times if necessary, these Fatou components are completely invariant and they must be unbounded. However, if $f$ has an unbounded Fatou component, then $J(f)$ cannot be locally connected (see \cite{BD00b}, \cite{Osb13}), which contradicts to condition (a). Hence by the assumptions, we only need to check (3).
In fact, by \cite[Theorem B]{BD00b}, if $J(f)$ is not connected, then $J(f)$ must be not locally connected, which also violates the assumption. Thus $J(f)$ is a Sierpi\'{n}ski carpet.
\end{proof}

\begin{rmk}
Note that Lemma \ref{lem:carpet-criterion} not only can be applied to hyperbolic transcendental entire functions, but also to some others, especially to those functions whose Julia sets are known to be locally connected.
\end{rmk}

\begin{proof}[Proof of Theorem \ref{thm:carpet-hyper}]
Let $f\in\MB$ be a hyperbolic transcendental entire function without asymptotic values. Then the Fatou set of $f$ is non-empty.
Assume further that every component of $F(f)$ contains at most one critical value and the multiplicity of the critical points of $f$ is uniformly bounded. By \cite[Corollaries 1.3 and 1.9]{BFR15}, all Fatou components of $f$ are bounded and $J(f)$ is connected and locally connected. According \cite[Theorem 1.10]{BFR15}, all Fatou components of $f$ are Jordan domains (actually are quasi-disks).

To show that $J(f)$ is a Sierpi\'{n}ski carpet, by Lemma \ref{lem:carpet-criterion}, it is sufficient to prove that all boundaries of the Fatou components of $f$ are pairwise disjoint.
By the characterization of hyperbolic entire functions (see \cite[Proposition 2.1]{BFR15}), $F(f)$ is a finite union of attracting basins and hence all Fatou component of $f$ are eventually periodic.
By the assumption, the components of immediate attracting basins of $f$ have pairwise disjoint boundaries. Iterating $f$ finitely times if necessary, we assume that all immediate attracting basins of $f$ have period one.

Let $U'$ and $U''$ be two different components of $F(f)$. If $U'$ and $U''$ are preimages of different immediate attracting basins, then $\partial U'\cap\partial U''=\emptyset$.
Hence we assume that $U'$ and $U''$ are preimages of the same immediate attracting basin $U$, where $m\geq n\geq 0$ are smallest integers such that $f^{\circ m}(U')=f^{\circ n}(U'')=U$. Assume that $\partial U'\cap\partial U''\neq\emptyset$.
If $m>n\geq 0$, then $f^{\circ (m-1)}(U')$ is a Jordan domain which is different from $U$ and $\partial (f^{\circ (m-1)}(U'))\cap\partial (f^{\circ (m-1)}(U''))=\partial (f^{\circ (m-1)}(U'))\cap\partial U\neq\emptyset$. This implies that $\partial U$ contains a critical point, which is impossible since $f$ is hyperbolic. If $m=n\geq 1$, then $f^{\circ (m-1)}(U')$ and $f^{\circ (m-1)}(U'')$ are two different Jordan domains satisfying $\partial (f^{\circ (m-1)}(U'))\cap\partial (f^{\circ (m-1)}(U''))\neq\emptyset$. This implies that $\partial (f^{\circ (m-1)}(U'))$ contains a critical point, which is also impossible. Therefore, $\partial U'\cap\partial U''=\emptyset$. By the arbitrariness of $U'$ and $U''$, we conclude that $J(f)$ is a Sierpi\'{n}ski carpet.
\end{proof}


\subsection{Sierpi\'{n}ski holes in parameter spaces}

Let $\left\{f_\lambda\right\}_{\lambda \in\Lambda}$ be a natural family of finite type entire functions.
For any connected component $\MH$ of $\Lambda\setminus\Bif(f_\lambda)$, according to \cite[Proposition 5, p.\,1016]{EL92} (see also Theorem \ref{thm:J-stability}), if $f_{\lambda_0}\in\MH$ is hyperbolic, then all $f_\lambda\in\MH$ are hyperbolic. The open set $\MH$ is called a \textit{hyperbolic component} of the family $\left\{f_\lambda\right\}_{\lambda \in\Lambda}$.

\begin{defi}[Sierpi\'{n}ski holes]
Let $\MH$ be a hyperbolic component of the natural family $\left\{f_\lambda\right\}_{\lambda \in\Lambda}$ of finite type entire functions. If $J(f_{\lambda})$ is a Sierpi\'{n}ski carpet for some $\lambda\in\MH$, then $\MH$ is called a \textit{Sierpi\'{n}ski hole}.
\end{defi}

Corollary \ref{cor:carpet-hole} is a special case of the following result (see Figures \ref{Fig:M-sin-2} and \ref{Fig:carpet-2}).

\begin{thm}\label{thm:carpet-sin-d}
For the family
\begin{equation}
f_\lambda(z)=\lambda\sin^d z,
\end{equation}
where $d\geq 2$ and $\lambda\in\C\setminus\{0\}$, if $\lambda$ lies in the attracting basin of $0$ but not in the immediate attracting basin, then $J(f_\lambda)$ is a Sierpi\'{n}ski carpet. In particular, in the parameter plane of $\{f_\lambda\}_{\lambda\in\C\setminus\{0\}}$,
\begin{enumerate}
\item there is a Sierpi\'{n}ski hole $\MH_k$ containing $\lambda_k=k\pi$ for each $k\in\Z\setminus\{0\}$; and
\item there is a Sierpi\'{n}ski hole $\MH_k'$ containing $\lambda_k':=k\pi+\frac{\pi}{2}$ with $k\in\Z$ if and only if $k\in\Z\setminus\{-1,0\}$.
\end{enumerate}
Moreover, $\{\MH_k: k\in\Z\setminus\{0\}\}$ and $\{\MH_k': k\in\Z\setminus\{-1,0\}\}$ are pairwise different.
\end{thm}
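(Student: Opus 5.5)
The plan is to verify the hypotheses of Theorem~\ref{thm:carpet-hyper} whenever $\lambda$ lies in the attracting basin $\mathcal{A}_\lambda$ of $0$ but not in the immediate basin $A_\lambda$. Parts (1) and (2) then follow by exhibiting parameters of this type and using that hyperbolicity is an open condition.

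\emph{Singular values.} Since $\sin^d$ has no asymptotic values, the only singular values of $f_\lambda$ are critical values. The critical points are:
\begin{itemize}
\item $k\pi$, of local degree $d$, with critical value $0$;
\item $\frac{\pi}{2}+k\pi$, of local degree $2$, with critical values $\lambda$ and $(-1)^d\lambda$.
\end{itemize}
So $S(f_\lambda)\subset\{0,\lambda,-\lambda\}$, $f_\lambda\in\MB$, and the multiplicities are bounded by $d-1$. The point $0$ is a superattracting fixed point.

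\emph{Hyperbolicity and hypothesis (a).} If $\lambda\in\mathcal{A}_\lambda$, then $-\lambda\in\mathcal{A}_\lambda$ as well, because $f_\lambda(-z)=(-1)^df_\lambda(z)$. Hence $\MP(f_\lambda)$ is a compact subset of $\mathcal{A}_\lambda\subset F(f_\lambda)$, and $f_\lambda$ is hyperbolic.

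For hypothesis (a), the immediate basin $A_\lambda$ contains only the critical value $0$, because $\pm\lambda\notin A_\lambda$. When $d$ is even, $\lambda$ is the only other critical value. When $d$ is odd, $f_\lambda$ is odd, so $z\mapsto -z$ permutes the Fatou components. Let $U$ be the component containing $\lambda$. If $U=-U$, then $U$ is a bounded simply connected domain invariant under the involution $z\mapsto-z$, so it contains its fixed point $0$. Then $U=A_\lambda$, which is a contradiction; hence $\lambda$ and $-\lambda$ lie in different components.

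\emph{Hypothesis (b).} It is automatic, since there is only one immediate basin and it is fixed (as remarked after Theorem~\ref{thm:carpet-hyper}). Theorem~\ref{thm:carpet-hyper} then gives that $J(f_\lambda)$ is a Sierpi\'{n}ski carpet.

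\emph{Parameters $\lambda_k=k\pi$.} Here $f_{\lambda_k}(\lambda_k)=k\pi\sin^d(k\pi)=0$, so $\lambda_k\in\mathcal{A}_{\lambda_k}$. It remains to show $\lambda_k\notin A_{\lambda_k}$; I expect this to be the main obstacle. I would use the symmetry $T(z)=z+\pi$, for which $f_\lambda\circ T=(-1)^df_\lambda$. Thus $T$ permutes Fatou components, and $T(A_\lambda)$ is a component of $f_\lambda^{-1}(A_\lambda)$.

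Suppose $k\pi\in A_\lambda$. Then $T^k(A_\lambda)\ni k\pi$, so $T^k(A_\lambda)=A_\lambda$, and $A_\lambda$ is an unbounded invariant simply connected domain. It would contain all the critical points $jk\pi$, $j\in\Z$, so $f_\lambda|_{A_\lambda}$ has infinite degree. At the same time, the only critical values lying in $A_\lambda$ are $0$ and $\lambda$, and both are mapped to $0$ immediately.

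The first thing I would try is to derive a contradiction from a $T^k$-periodic curve $\Gamma\subset A_\lambda$ joining $0$ to $k\pi$ and its translates. Its image $f_\lambda(\Gamma)$ is compact, while $|\sin z|\to\infty$ as $|\operatorname{Im} z|\to\infty$. Using real symmetry for real $k\pi$ (the map $f_{\lambda_k}$ is real, and one can track the real dynamics on $[-|k|\pi,|k|\pi]$), I would aim to exhibit a repelling fixed point, or a Julia point on $\R$, separating $0$ from $k\pi$. Failing that, I would use a direct estimate showing that $A_\lambda$ lies in the strip $|\operatorname{Re}z|<\pi/2$, where $f_\lambda$ is univalent away from $0$.

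\emph{Parameters $\lambda_k'=k\pi+\frac{\pi}{2}$.} A similar computation applies: $f(\lambda_k')=\lambda_k'(\pm1)^d$, which is either $\lambda_k'$ itself or $-\lambda_k'$, and the orbit then has to be followed one further step. The exceptional cases $k\in\{-1,0\}$ are exactly those where $\lambda_k'=\pm\pi/2$ is itself a fixed critical point, or is mapped to one. There the dynamics is superattracting at $\pm\pi/2$, there is a second attracting cycle, and $\lambda$ is not in the basin of $0$, so no Sierpi\'{n}ski hole arises.

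\emph{Conclusion.} In each remaining case the parameter is hyperbolic with $\lambda$ in a strictly preperiodic component, and its hyperbolic component is a Sierpi\'{n}ski hole. The holes are pairwise different because the number of iterates needed for $\lambda$ to enter $A_\lambda$, together with the combinatorial position of that component, is constant on a hyperbolic component. These invariants differ for different $k$.
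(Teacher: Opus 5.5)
\textbf{Part (2) has a genuine gap.} For every $k\in\Z$ the point $\lambda_k'=k\pi+\frac{\pi}{2}$ is a zero of $\cos z$, hence a critical point of $f_{\lambda_k'}$, and $f_{\lambda_k'}(\lambda_k')=(-1)^{kd}\lambda_k'$. So $\{\lambda_k',-\lambda_k'\}$ is a superattracting cycle (two fixed points or a $2$-cycle) for \emph{every} $k$, not only for $k\in\{-1,0\}$.

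This has several consequences. $\lambda_k'$ never lies in the basin of $0$, so the first part of the theorem never applies to these parameters. Your concluding claim that $\lambda$ lies ``in a strictly preperiodic component'' is false. Hypothesis (b) of Theorem~\ref{thm:carpet-hyper} is no longer automatic: there are at least two immediate basins (those of $0$ and of $\pm\lambda_k'$), and their boundaries must be shown to be disjoint. Your explanation of the exceptional cases (``$\lambda$ is not in the basin of $0$, so no Sierpi\'{n}ski hole arises'') is a non sequitur, since it holds for every $k$.

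The paper instead verifies (a) and (b) directly. By real symmetry and the maximum principle, $0$, $\lambda_k'$ and $-\lambda_k'$ lie in distinct components. It then uses the real ordering $-|\lambda_k'|<-\pi<-\frac{\pi}{2}<0<\frac{\pi}{2}<\pi<|\lambda_k'|$, which holds exactly when $k\notin\{-1,0\}$, together with $f(m\pi)=0$ and $f(m\pi+\frac{\pi}{2})=\pm\lambda_k'$. This shows that the components $U_m\ni m\pi+\frac{\pi}{2}$ are bounded Jordan domains with pairwise disjoint boundaries. For $\lambda=\frac{\pi}{2}$ the actual obstruction is that the two immediate basins touch. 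A real-line analysis of $x\mapsto\frac{\pi}{2}\sin^d x-x$ on $[0,\frac{\pi}{2}]$ gives a repelling fixed point $x_0\in(0,\frac{\pi}{2})$ with $[0,x_0)$ in the basin of $0$ and $(x_0,\frac{\pi}{2}]$ in the basin of $\frac{\pi}{2}$. So $x_0$ lies on both boundaries.

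Your treatment of the general statement is fine and is essentially the paper's Step~1. Two smaller points remain.

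\textbf{Part (1) is left as a plan.} The first option you list is exactly the paper's argument, and it is easy to finish:
\begin{enumerate}
\item For $k\geq 1$, $h(x)=k\pi\sin^d x-x$ is negative just to the right of $0$ and positive at $\frac{\pi}{2}$. Its first positive zero is therefore a fixed point with multiplier $\geq 1$, hence lies in $J$.
\item For $k\leq -1$ the same reasoning gives a fixed point (even $d$) or a point of a $2$-cycle $\{\pm x_0\}$ (odd $d$) in $(-\frac{\pi}{2},0)$.
\item A simply connected Fatou component symmetric about $\R$ meets $\R$ in an interval, so it cannot contain both $0$ and $k\pi$.
\end{enumerate}
Your translation argument with $T$ does not close on its own. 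For hyperbolic maps without asymptotic values, an unbounded immediate basin containing infinitely many critical points is not excluded a priori.

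\textbf{The ``moreover'' invariant does not separate the holes as stated.} The number of iterates needed to enter $A_\lambda$ is $1$ on every $\MH_k$ and undefined on every $\MH_k'$. So everything rests on the unspecified ``combinatorial position''. You need a concrete $k$-dependent label, for instance:
\begin{enumerate}
\item on $\MH_k$, the index $m$ of the critical point $m\pi$ in the component of $f_\lambda^{-1}(A_\lambda)$ that contains $\lambda$;
\item on $\MH_k'$, the index $m$ of the critical point $m\pi+\frac{\pi}{2}$ in the immediate basin capturing $\lambda$.
\end{enumerate}
You would then need to prove that this label is locally constant on a hyperbolic component.
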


\begin{figure}[!htpb]
  \setlength{\unitlength}{1mm}
  \subfigure[{Julia set of $\pi\sin^2 z$}]{\includegraphics[width=0.47\textwidth]{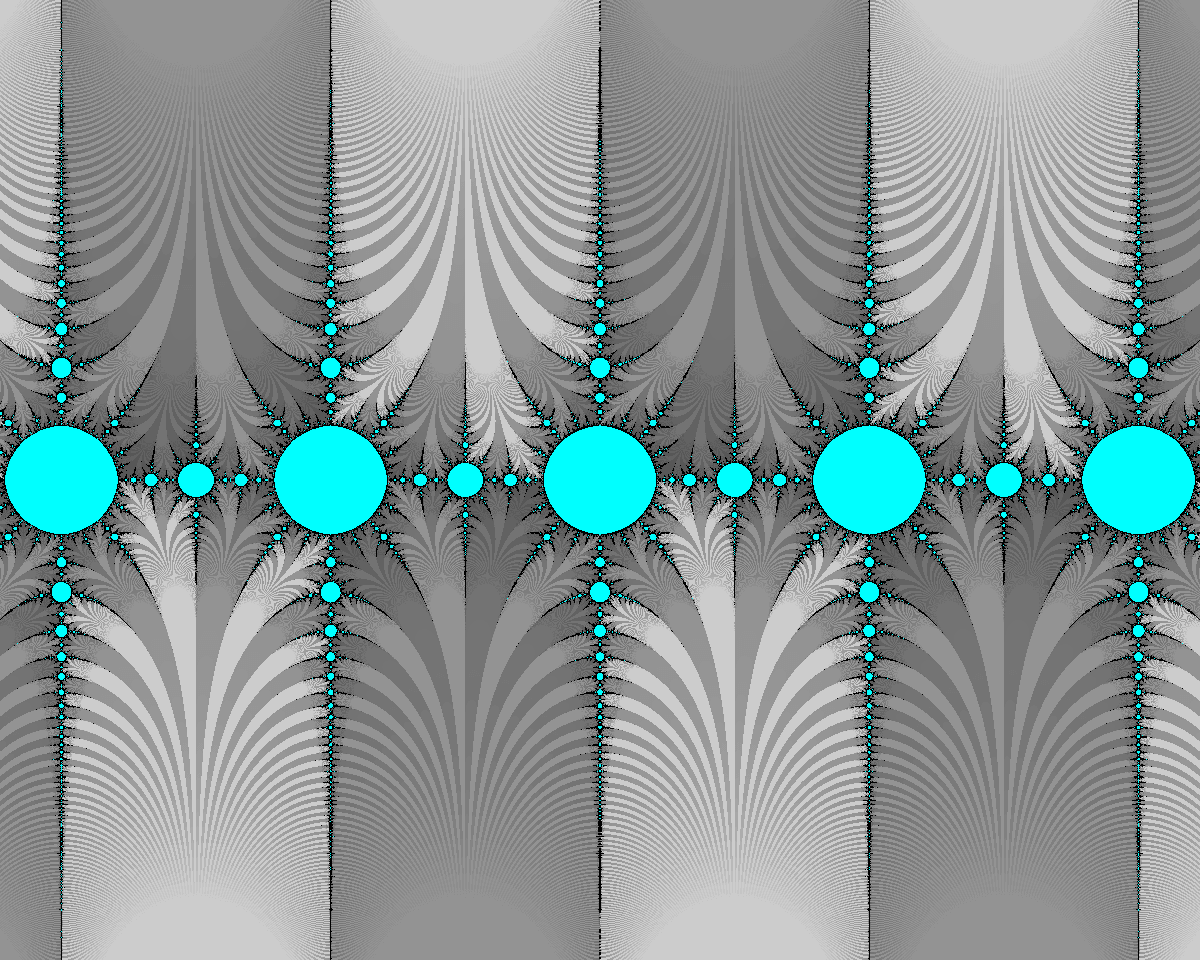}} \quad
  \subfigure[{Julia set of $\pi\sin^3 z$}]{\includegraphics[width=0.47\textwidth]{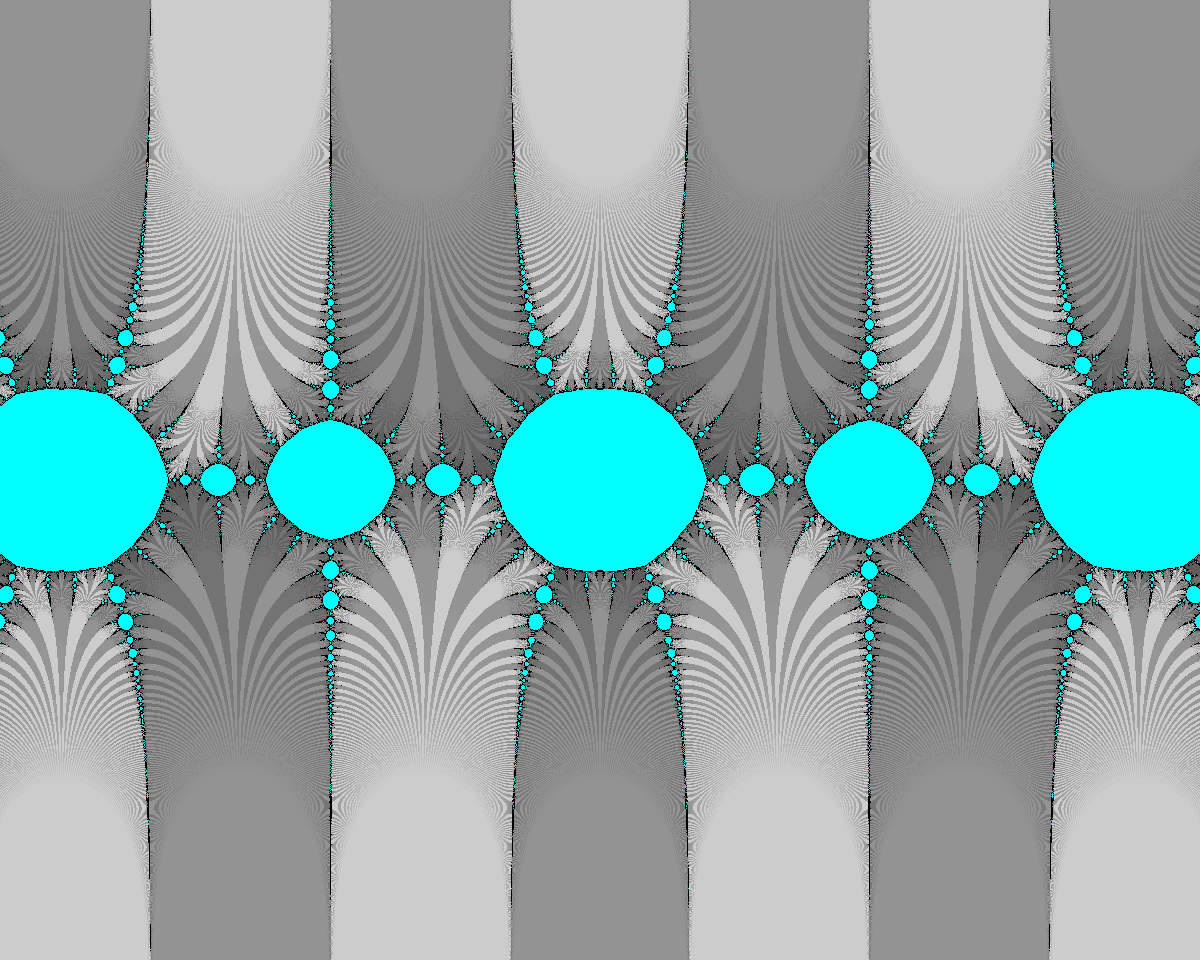}}
  \subfigure[{Julia set of $\frac{3\pi}{2}\sin^2 z$}]{\includegraphics[width=0.47\textwidth]{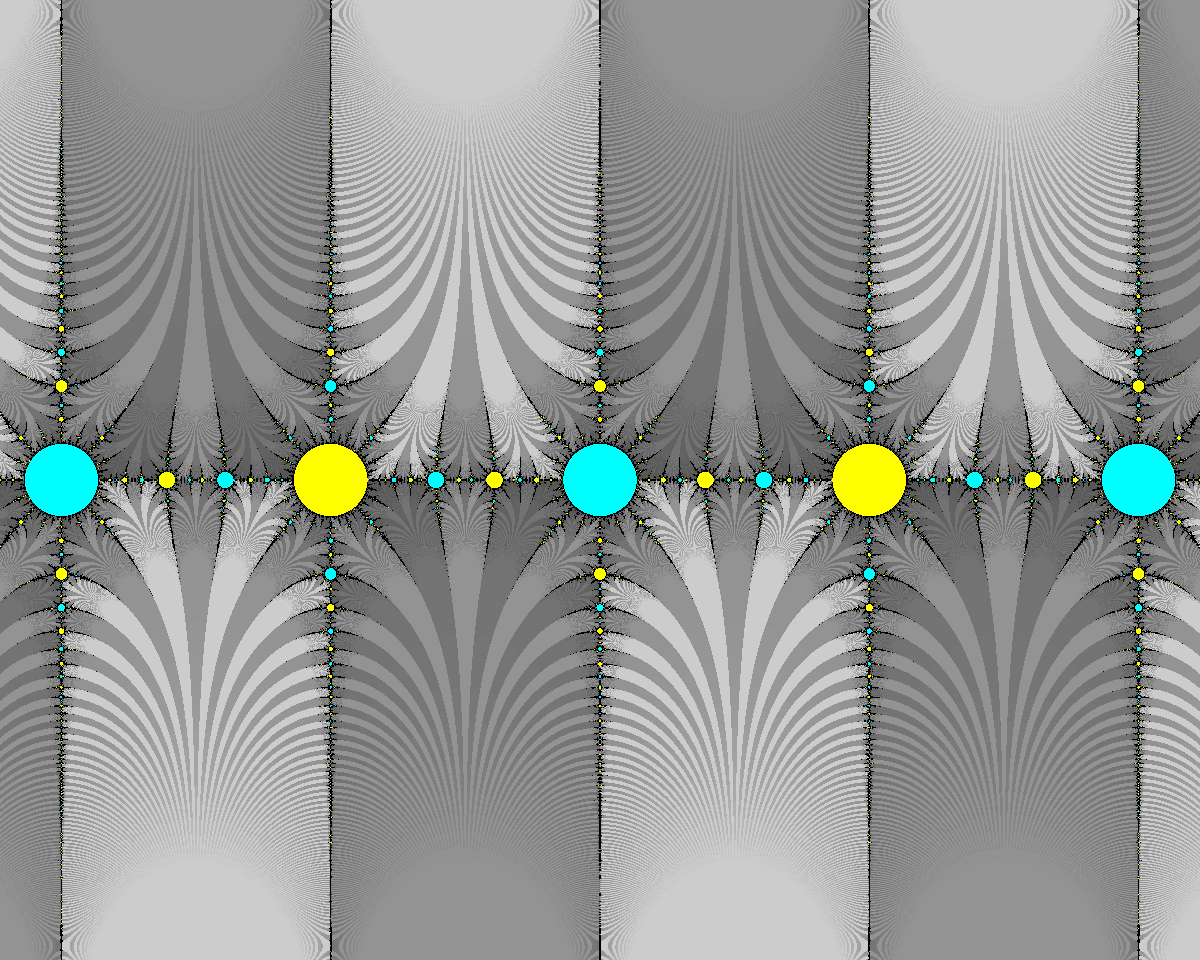}} \quad
  \subfigure[{Julia set of $\frac{\pi}{2}\sin^2 z$}]{\includegraphics[width=0.47\textwidth]{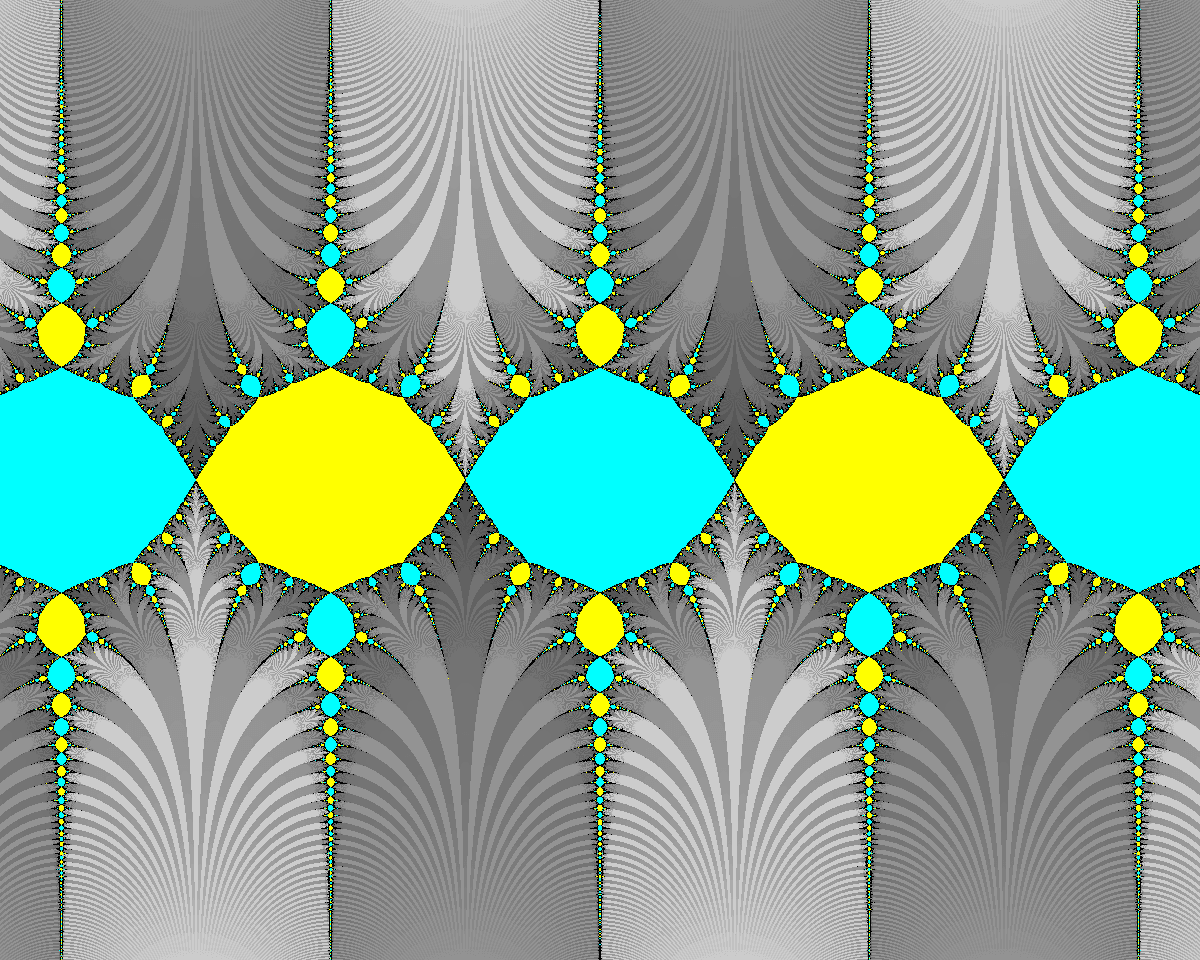}}
  \caption{Some Julia sets of $f_\lambda(z)=\lambda\sin^d z$, where $d\geq 2$. Theorem \ref{thm:carpet-sin-d} proves that the Julia set of $k\pi\sin^d z$ is a Sierpi\'{n}ski carpet for all $k\in\Z\setminus\{0\}$, and the Julia set of $(k\pi+\frac{\pi}{2})\sin^d z$ with $k\in\Z$ is a Sierpi\'{n}ski carpet if and only if $k\in\Z\setminus\{-1,0\}$. The cyan and yellow regions represent two different attracting basins. Range of figures: $[-3.5,3.5]\times[-2.8,2.8]$.}
  \label{Fig:carpet-2}
\end{figure}

\begin{proof}
Note that $f_\lambda$ is a natural family having a super-attracting fixed point at $0$. Each map in this family has no asymptotic values in $\C$ and the set of singular values (actually critical values) of $f_\lambda$ is $S(f_\lambda)=\{0,\lambda\}$ if $d\geq 2$ is even and $S(f_\lambda)=\{0,\lambda,-\lambda\}$ if $d\geq 3$ is odd. Moreover, the set of critical points of $f_\lambda$ is $\Crit(f_\lambda)=\{k\pi,k\pi+\frac{\pi}{2}:k\in\Z\}$ and the local degrees of $k\pi$ and $k\pi+\frac{\pi}{2}$ are $d$ and $2$ respectively.

\medskip
\textit{Step 1. Preperiod implies carpets}.
Suppose that the critical value $\lambda$ is in the attracting basin of $0$ but not in the immediate attracting basin $U_0$ of $0$. Then $-\lambda$ is also in the attracting basin of $0$ and hence $f_\lambda$ is hyperbolic.
If $d\geq 2$ is even, then $J(f_\lambda)$ is a Sierpi\'{n}ski carpet by Theorem \ref{thm:carpet-hyper}.
Let $d\geq 3$ be odd. Then the critical value $-\lambda$ is also not in $U_0$ by the symmetry since $f_\lambda$ is an odd function. We claim that $\lambda$ and $-\lambda$ are in different Fatou components of $f_\lambda$. Indeed, if there is a Fatou component $U_1$ containing $\{\lambda,-\lambda\}$, then by the dynamical symmetry, $U_1=-U_1$ is a Fatou component surrounding $U_0$. In particular, $U_1$ is not simply connected.
However, every connected component of an attracting basin is simply connected by the maximum principle (see also \cite[Proposition 2.1]{BFR15}).
This is a contradiction. Hence the critical values $\lambda$ and $-\lambda$ are in different Fatou components of $f_\lambda$. Thus $J(f_\lambda)$ is a Sierpi\'{n}ski carpet by Theorem \ref{thm:carpet-hyper}.

For any $\lambda$ which is in the attracting basin of $0$ but not in the immediate attracting basin, there exists a Sierpi\'{n}ski hole $\MH$ containing $\lambda$.
In the following we prove that there are infinitely many such Sierpi\'{n}ski holes in the parameter plane of $\{f_\lambda\}$.

\medskip
\textit{Step 2. Sierpi\'{n}ski holes containing $\lambda_k=k\pi$}.
For $k\in\Z\setminus\{0\}$, we have
\begin{equation}
f_{\lambda_k}(z)=\lambda_k\sin^d z=k\pi\sin^d z
\end{equation}
and $f_{\lambda_k}(\lambda_k)=f_{\lambda_k}(-\lambda_k)=0$. This implies that $\lambda_k$ lies in the attracting basin of $0$ and there exists a hyperbolic component $\MH_k$ containing $\lambda_k$.
A direct calculation shows that there exists a repelling fixed point $z_0\in(0,\frac{\pi}{2})$ if $k>0$ and a repelling periodic point $z_0\in(-\frac{\pi}{2},0)$ if $k<0$. Hence $z_0$ and $-z_0$ are contained in $J(f_{\lambda_k})$. Note that the dynamics of $f_{\lambda_k}$ is symmetric about the real axis $\R$ (since $\lambda_k$ is real):
\begin{itemize}
\item $J(f_{\lambda_k})$ is symmetric about $\R$; and
\item any Fatou component of $f_{\lambda_k}$ intersecting $\R$ is symmetric about $\R$.
\end{itemize}
By the maximum principle, we conclude that the Fatou components containing $0$, $\lambda_k$ and $-\lambda_k$ are different. Hence $\lambda_k$ is not in the immediate attracting basin of $0$ and $\MH_k$ is a Sierpi\'{n}ski hole for all $k\in\Z\setminus\{0\}$.

\medskip
\textit{Step 3. Sierpi\'{n}ski holes containing $\lambda_k'=k\pi+\frac{\pi}{2}$}. For $k\in\Z$, we have
\begin{equation}
f_{\lambda_k'}(z)=\lambda_k'\sin^d z=(k\pi+\tfrac{\pi}{2})\sin^d z.
\end{equation}
Hence
\begin{equation}
\left\{
\begin{array}{ll}
f_{\lambda_k'}(\lambda_k')=\lambda_k' \\
f_{\lambda_k'}(-\lambda_k')=(-1)^d\lambda_k'
\end{array}
\right.
\text{for even } k\text{\quad and }
\left\{
\begin{array}{ll}
f_{\lambda_k'}(\lambda_k')=(-1)^d\lambda_k' \\
f_{\lambda_k'}(-\lambda_k')=\lambda_k'
\end{array}
\right.
\text{ for odd } k.
\end{equation}
If $d$ is even, then $f_{\lambda_k'}(\lambda_k')=f_{\lambda_k'}(-\lambda_k')=\lambda_k'$ is a super-attracting fixed point. If $d$ is odd, then $\{\lambda_k',-\lambda_k'\}$ are super-attracting fixed points of $f_{\lambda_k'}$ (for even $k$) or they form a super-attracting cycle of period two (for odd $k$). In all cases, $f_{\lambda_k'}$ is hyperbolic and hence there exists a hyperbolic component $\MH_k'$ containing $\lambda_k'$. Still by the dynamical symmetry and maximum principle, the points $0$, $\lambda_k'$ and $-\lambda_k'$ are in different Fatou components of $f_{\lambda_k'}$. By \cite[Corollary 1.9]{BFR15}, $J(f_{\lambda_k'})$ is locally connected for all $k\in\Z$.

We first assume that $k\in\Z\setminus\{-1,0\}$. Note that $f_{\lambda_k'}(m\pi+\frac{\pi}{2})=(-1)^{dm}\lambda_k'$, where $m\in\Z$. Hence $m\pi+\frac{\pi}{2}\in F(f_{\lambda_k'})$ for all $m\in\Z$.
Let $U_m$ be the Fatou component of $f_{\lambda_k'}$ containing $m\pi+\frac{\pi}{2}$, where $m\in\Z$.
Note that $f_{\lambda_k'}(m\pi)=0$ for each $m\in\Z$ and
\begin{equation}
-|\lambda_k'|<-\pi<-\tfrac{\pi}{2}<0<\tfrac{\pi}{2}<\pi<|\lambda_k'|.
\end{equation}
Since $0$, $\lambda_k'$ and $-\lambda_k'$ are in different Fatou components of $f_{\lambda_k'}$, by the symmetry of $f_{\lambda_k'}$ (the Julia set is symmetric about the real axis) and the maximal principle, we conclude that each $U_m$ is a bounded Jordan domain and $\{\partial U_m:m\in\Z\}$ are pairwise disjoint. Hence $J(f_{\lambda_k'})$ is a Sierpi\'{n}ski carpet by Theorem \ref{thm:carpet-hyper} and $\MH_k'$ is a Sierpi\'{n}ski hole for all $k\in\Z\setminus\{-1,0\}$.

Now we prove that the Julia set of $f_{\lambda_k'}$ with $k\in\{-1,0\}$ (i.e., $f_{\pm\frac{\pi}{2}}$) is not a Sierpi\'{n}ski carpet. We only consider $f_{\frac{\pi}{2}}$ since the proof for $f_{-\frac{\pi}{2}}$ is completely similar. Let $V_0$ and $V_1$ be the Fatou components of $f_{\frac{\pi}{2}}$ containing $0$ and $\frac{\pi}{2}$ respectively.
Note that $x\mapsto \frac{\pi}{2}\sin^d x$ is increasing on $[0,\frac{\pi}{2}]$.
By considering the convexity of $x\mapsto \frac{\pi}{2}\sin^d x-x$ on $[0,\frac{\pi}{2}]$, a direct calculation shows that there exists a repelling fixed point $x_0\in(0,\frac{\pi}{2})$ such that $[0,x_0)\subset V_0$ and $(x_0,\frac{\pi}{2}]\subset V_1$. Hence $x_0\in\partial V_0\cap \partial V_1$ and $J(f_{\frac{\pi}{2}})$ is not a Sierpi\'{n}ski carpet.
In particular, $\MH_k'$ is not a Sierpi\'{n}ski hole for  $k\in\{-1,0\}$.

\medskip
\textit{Step 4. These Sierpi\'{n}ski holes are different}.
Note that each $\MH_k$ (resp., $\MH_k'$) is symmetric about the real axis. By the maximum principle, we conclude that each $\MH_k$ (resp., $\MH_k'$) is simply connected.
Then $\MH_k\neq \MH_l'$ for any $k\in\Z\setminus\{0\}$ and $l\in\Z\setminus\{-1,0\}$. This implies that $\MH_{k_1}\neq\MH_{k_2}$ for any different $k_1,k_2\in \Z\setminus\{0\}$ and $\MH_{l_1}'\neq\MH_{l_2}'$ for any different $l_1,l_2\in \Z\setminus\{-1,0\}$.
Thus $\{\MH_k: k\in\Z\setminus\{0\}\}$ and $\{\MH_l': l\in\Z\setminus\{-1,0\}\}$ are pairwise different.
The proof is complete.
\end{proof}

\subsection{The sine family}

The sine family $f_\lambda(z)=\lambda\sin z$, where $\lambda\in\C\setminus\{0\}$, was first systematically studied by Dom\'{i}nguez and Sienra in \cite{DS02}.
In this subsection, we prove that there also exist infinitely many Sierpi\'{n}ski holes in the parameter plane of this family.

\begin{thm}\label{thm:sine}
Let $f_\lambda(z)=\lambda\sin z$, where $\lambda\in\C\setminus\{0\}$ and $\lambda_k':=k\pi+\frac{\pi}{2}$ with $k\in\Z$. Then $J(f_{\lambda_k'})$ is a Sierpi\'{n}ski carpet if and only if $k\in\Z\setminus\{-1,0\}$.
\end{thm}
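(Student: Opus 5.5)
We follow Step 3 of the proof of Theorem \ref{thm:carpet-sin-d}, now with $d=1$. Here $f_\lambda(z)=\lambda\sin z$ has no asymptotic values in $\C$. Its critical points are $m\pi+\frac{\pi}{2}$, $m\in\Z$, all simple, and its critical values are $\pm\lambda$. For $\lambda=\lambda_k'$ we have $f_{\lambda_k'}(\pm\lambda_k')=\pm\lambda_k'\sin(k\pi+\frac{\pi}{2})=\pm(-1)^k\lambda_k'$. So for even $k$ both critical values are super-attracting fixed points, and for odd $k$ they form a super-attracting $2$-cycle. Hence $f_{\lambda_k'}$ is hyperbolic, and $J(f_{\lambda_k'})$ is locally connected with all Fatou components bounded quasi-disks, by \cite[Corollaries 1.3 and 1.9, Theorem 1.10]{BFR15} (critical multiplicity $2$, uniformly bounded). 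Because $f_\lambda$ is odd, $\lambda$ and $-\lambda$ cannot lie in one Fatou component: by the argument of Step 1, such a component would be symmetric, would surround $0$, and would fail to be simply connected. Thus condition (a) of Theorem \ref{thm:carpet-hyper} holds, and the question reduces to condition (b): whether the immediate basins $U_+\ni\lambda_k'$ and $U_-\ni-\lambda_k'$ have disjoint boundaries.

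\emph{Case $k\in\Z\setminus\{-1,0\}$.} Then $|\lambda_k'|\ge 3\pi/2$. Restrict to the real line, where the dynamics is symmetric with respect to $\R$, so any Fatou component meeting $\R$ is symmetric and, by the maximum principle, meets $\R$ in an interval. The points $m\pi$ map to $0$, and $0$ is a repelling fixed point since $|f'(0)|=|\lambda_k'|>1$. Hence every $m\pi$ lies in $J$, and each Fatou component meeting $\R$ lies inside a single interval $(m\pi,(m+1)\pi)$. In particular $U_+\cap\R\subset(m\pi,(m+1)\pi)$ with $\lambda_k'=m\pi+\frac{\pi}{2}$ the midpoint, and $U_-$ is the mirror image $-U_+$. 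These lie in different intervals, since $\lambda_k'\ne\pm\frac{\pi}{2}$. Both components are bounded Jordan domains symmetric about $\R$, so they are separated as soon as their real traces are. To make this rigorous, we take the real cut points $m\pi\in J$ together with preimages of $J$-arcs (the boundary Jordan curves of the Fatou components containing $0$-preimages are not needed). The cleanest route: $U_+$ and $U_-$ are Jordan domains symmetric about $\R$, so if $\partial U_+\cap\partial U_-\neq\emptyset$, their closures together with the reflection would enclose a point of $\R$ lying between them, namely a point $m\pi$ whose preimage orbit lies in $J$. This contradicts the fact that $J$ is connected with no bounded complementary piece containing Julia points, by the maximum principle applied to $f^n$ on the enclosed region. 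Pairwise disjointness for all components then follows as in the proof of Theorem \ref{thm:carpet-hyper}, whose periodic-case argument handles period two after passing to $f^{\circ 2}$.

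\emph{Case $k\in\{-1,0\}$.} Take $\lambda=\frac{\pi}{2}$; the case $-\frac{\pi}{2}$ is symmetric. Now $\pm\frac{\pi}{2}$ are super-attracting fixed points, and $g(x)=\frac{\pi}{2}\sin x$ satisfies $g(x)>x$ on $(0,\frac{\pi}{2})$ by concavity. So $(0,\frac{\pi}{2}]$ lies in the basin $V_+$ and $[-\frac{\pi}{2},0)$ lies in $V_-$. Meanwhile $0$ is a repelling fixed point with $f'(0)=\frac{\pi}{2}$, so $0\in J$. The intervals $(0,\frac{\pi}{2}]$ and $[-\frac{\pi}{2},0)$ lie in the immediate basins (connected real intervals attached to the fixed points), and both accumulate at $0$. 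Hence $0\in\partial V_+\cap\partial V_-$, and $J$ is not a carpet.

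The main obstacle is the rigorous separation argument in the first case, that is, showing the two symmetric basins cannot touch off the real axis. We would first try to use symmetry together with the connectedness of $J\cup\{\infty\}$, which is not separated by bounded Jordan domains. Failing that, we would control $U_\pm$ inside the vertical strips $m\pi<\mathrm{Re}\,z<(m+1)\pi$, using the fact that $f$ maps the lines $\mathrm{Re}\,z=m\pi$ into $i\R$ and that $i\R\subset J$ for these parameters.
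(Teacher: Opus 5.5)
Your proposal is correct and follows essentially the same route as the paper: you get hyperbolicity of $f_{\lambda_k'}$, then local connectivity and Jordan boundaries via \cite{BFR15}, then reduce to Theorem \ref{thm:carpet-hyper}(b) by separating the two basins using the real Julia points $m\pi$, symmetry and the maximum principle, and for $k\in\{-1,0\}$ you exhibit the common boundary point $0$ of the basins of $\pm\frac{\pi}{2}$. In the enclosure step, state explicitly that $f^{\circ n}$ is uniformly bounded on the forward-invariant compact set $\overline{U_+}\cup\overline{U_-}$ (needed because the enclosing curve passes through Julia points), and that the real Julia point $0$ lies strictly between the closed real traces exactly because $k\notin\{-1,0\}$; your fallback, that the lines $\mathrm{Re}\,z=m\pi$ lie in $J$ since they map into $\ii\R$, which consists of $0$ and escaping points, gives an even cleaner rigorous separation than the paper's sketch.
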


\begin{proof}
We only give a sketch of the proof since the idea is similar to Theorem \ref{thm:carpet-sin-d}.
Note that $f_\lambda$ is a natural family having no asymptotic values in $\C$ and the set of critical values of $f_\lambda$ is $S(f_\lambda)=\{\lambda,-\lambda\}$. Moreover, the set of critical points is $\Crit(f_\lambda)=\{k\pi+\frac{\pi}{2}:k\in\Z\}$ and the local degree of each critical point is two.

For each $k\in\Z$, we have
\begin{equation}
f_{\lambda_k'}(z)=\lambda_k'\sin z=(k\pi+\tfrac{\pi}{2})\sin z.
\end{equation}
Note that $f_{\lambda_k'}(\pm\lambda_k')=\pm\lambda_k'$ for even $k$ and $f_{\lambda_k'}(\pm\lambda_k')=\mp\lambda_k'$ for odd $k$. Hence $\{\lambda_k',-\lambda_k'\}$ are super-attracting fixed points of $f_{\lambda_k'}$ (for even $k$) or they form a super-attracting cycle of period two (for odd $k$). In both cases, $f_{\lambda_k'}$ is hyperbolic, and $\lambda_k'$, $-\lambda_k'$ are in different Fatou components of $f_{\lambda_k'}$. By \cite[Corollary 1.9]{BFR15}, $J(f_{\lambda_k'})$ is locally connected for all $k\in\Z$.

Let $k\in\Z\setminus\{-1,0\}$. Similar to Step 3 of Theorem \ref{thm:carpet-sin-d}, we have $m\pi+\frac{\pi}{2}\in F(f_{\lambda_k'})$ for all $m\in\Z$, each $U_m$ is a bounded Jordan domain and $\{\partial U_m:m\in\Z\}$ are pairwise disjoint, where $U_m$ is the Fatou component of $f_{\lambda_k'}$ containing $m\pi+\frac{\pi}{2}$.
Hence $J(f_{\lambda_k'})$ is a Sierpi\'{n}ski carpet by Theorem \ref{thm:carpet-hyper}.

Finally we prove that the Julia set of $f_{\pm\frac{\pi}{2}}$ is not a Sierpi\'{n}ski carpet. We only consider $f_{\frac{\pi}{2}}$ since the proof for $f_{-\frac{\pi}{2}}$ is completely similar. Let $U_\pm$ be the Fatou component containing $\pm\frac{\pi}{2}$ respectively. Since $0$ is a repelling fixed point of $f_{\frac{\pi}{2}}$, by the symmetry and maximal principle, we have $U_+\neq U_-$.
A direct calculation shows that $(0,\frac{\pi}{2}]\subset U_+$ and $[-\frac{\pi}{2},0)\subset U_-$. Hence $0\in\partial U_+\cap \partial U_-$. Therefore, $J(f_{\frac{\pi}{2}})$ is not a Sierpi\'{n}ski carpet.
The proof is complete.
\end{proof}

\begin{rmk}
One may verify that the Julia set of $z\mapsto k\pi\sin z$ is $\C$ for all $k\in\Z\setminus\{0\}$, which is different from $z\mapsto k\pi\sin^d z$ with $d\geq 2$.
\end{rmk}

\section{Density of Sierpi\'{n}ski carpet parameters}\label{sec:density}

In this section, by using Alhamed-Rempe-Sixsmith's criterion on the local connectivity of Julia sets of strongly geometrically finite transcendental entire functions, we provide a proof of Theorem \ref{thm:carpet-dense} and also give some applications.

\subsection{Proof of Theorem \ref{thm:carpet-dense}}

By Theorem \ref{thm:straightening}, every polynomial-like map $g:U\to V$ is hybrid equivalent to a unicritical polynomial $q_c(z)=z^d+c$, where $c\in\C$.
If the unique critical orbit of $g$ is contained in $U$, then $q_c$ is unique.

Let $f$ be a transcendental entire function.
If there exist $p\geq 1$ and two Jordan domains $U$, $V$ such that $g=f^{\circ p}|_U:U\to V$ is a polynomial-like map with connected Julia set, then $f$ is called \textit{$p$-renormalizable} (or \textit{renormalizable} in short).
The sets $K(g)$, $f(K(g))$, $\cdots$, $f^{\circ (p-1)}(K(g))$ are called \textit{small filled Julia sets}, and $J(g)$, $f(J(g))$, $\cdots$, $f^{\circ (p-1)}(J(g))$ are called \textit{small Julia sets}.
For more backgrounds and results on the polynomial-like renormlization, we refer to \cite[Chapter 5]{McM94b}.

\begin{proof}[Proof of Theorem \ref{thm:carpet-dense}]
Let $(f_\lambda)_{\lambda\in\Lambda}$ be a natural family of finite type entire functions without asymptotic values, where $c_{\lambda}$ is a marked active critical point which is not persistently a Picard exceptional value and $f_\lambda(c_\lambda)$ is the only active critical value.
Let $\lambda_0$ be any parameter in $\Bif(f_\lambda)$ and $W$ be any open neighborhood of $\lambda_0$.
By Theorem \ref{thm-univ}, there exist $d\geq 2$ and $n\geq 1$ such that for any $c \in M_d$, there exists a polynomial-like map $f_\lambda^{\circ n}: U \rightarrow V$ which is hybrid conjugate to $q_c(z)=z^d+c$, where $\lambda=\lambda(c)\in W$.
Let $c\in \partial M_d$ be a parabolic parameter such that $q_c(z)=z^d+c$ has infinitely many bounded Fatou components and all of them are bounded by pairwise disjoint Jordan curves.
We use $\MU:=\{U_1,\cdots, U_{n'}\}$ to denote the corresponding immediate parabolic basins of $f_\lambda$ with $\lambda=\lambda(c)\in W\cap\Bif(f_\lambda)$.
Then $\{\partial U_1,\cdots, \partial U_{n'}\}$ are pairwise disjoint.
In the following we prove that $J(f_\lambda)$ is a Sierpi\'{n}ski carpet.

According to \cite{ARS22}, a transcendental entire function $f$ is \textit{strongly geometrically finite} if $F(f)\cap S(f)$ is compact, $J(f)\cap \MP(f)$ is finite and $f$ has bounded criticality on the Julia set (i.e., $J(f)$ contains no finite asymptotic values of $f$, and the local degree of $f$ at points of $J(f)$ is uniformly bounded).
By the assumptions of the theorem, $f_\lambda$ is strongly geometrically finite with no finite asymptotic values, and the Fatou set of $f_\lambda$ contains at most two critical values and they are in different Fatou components. By \cite[Corollary 1.10]{ARS22}, $J(f_\lambda)$ is locally connected.
If all critical values, except $f_\lambda(c_\lambda)$, are all preperiodic in the Julia set of $f_\lambda$, then $F(f_\lambda)$ consists of $\MU$ and their preimages.
Similar to the proof of Theorem \ref{thm:carpet-hyper}, the boundaries of all components of $F(f_\lambda)$ are pairwise disjoint and $J(f_\lambda)$ is a Sierpi\'{n}ski carpet by Lemma \ref{lem:carpet-criterion}.

Suppose the number of the remaining critical values of $f_\lambda$ is one and this critical value is contained in a fixed immediate attracting or parabolic basin $U_0$ of $f_\lambda$. We claim that $\partial U_i\cap\partial U_0=\emptyset$ for all $1\leq i \leq n'$. Note that the small filled Julia set $K(g)$ of $g=f_\lambda^{\circ n}|_U: U \rightarrow V$ in $U$ is locally connected. By the maximal principle, we conclude that $K(g)\cap \partial U_0$ contains at most one point.
Assume that $\partial U_i\cap \partial U_0\neq\emptyset$ for some $i$. Let $z_i$ be the only element of $\partial U_i\cap \partial U_0$.
Without loss of generality, we assume that $U_i\subset K(g)$.
Note that $f_\lambda^{\circ n}(U_i)=U_{j}\subset K(g)$ and $\partial U_i\cap \partial U_j=\emptyset$ for some $1\leq j \leq n'$.
Then $\partial (f_\lambda^{\circ n}(U_i))\cap \partial (f_\lambda^{\circ n}(U_0))=\partial U_j\cap \partial U_0\subset K(g)\cap \partial U_0$ is a point which is different from $z_i$. This is a contradiction.
Hence $\partial U_i\cap\partial U_0=\emptyset$ for all $1\leq i \leq n'$. Similarly as above, $J(f_\lambda)$ is a Sierpi\'{n}ski carpet. By the arbitrariness of $\lambda_0$ and $W$, we conclude that the Sierpi\'{n}ski carpet parameters are dense in $\Bif(f_\lambda)$.
\end{proof}

\begin{rmk}
In the proof of Theorem \ref{thm:carpet-dense}, we choose parabolic parameters in the bifurcation locus. In fact, we can also choose Siegel parameters (see \S\ref{sec:carpet-Siegel}). Moreover, if the natural family has a fixed immediate attracting or parabolic basin for all $\lambda$, then we can choose Misiurewicz parameters in the bifurcation locus.
\end{rmk}

\subsection{Various cosine families}

In this subsection, we consider
\begin{equation}
g(z)=a e^z+be^{-z}+c,
\end{equation}
where $a,b\in\C\setminus\{0\}$ and $c\in\C$.
Obviously, $g$ has no finite asymptotic values and exactly two critical values $\pm 2\sqrt{ab}+c$.
As an immediate consequence of Theorem \ref{thm:carpet-dense}, we have the following result.

\begin{cor}[see Figure \ref{Fig:carpet-3}]\label{cor:carpet-density}
Suppose $(g_a)$ is one of the following family:
\begin{enumerate}
\item $g_a=g_{1,a}$ has a super-attracting fixed point at $0$:
\begin{equation}
g_{1,a}(z)=a (e^z+e^{-z}-2), \text{\quad where } a\in\C\setminus\{0\};
\end{equation}
\item $g_a=g_{2,a}$ has a parabolic fixed point at $0$ with multiplier $1$:
\begin{equation}
g_{2,a}(z)=a e^z+(a-1)e^{-z}+1-2a, \text{\quad where } a\in\C\setminus\{0,1\};
\end{equation}
\item $g_a=g_{3,a}$ has a preperiodic critical orbit $\log a\mapsto 0\mapsto 2\pi\ii\mapsto 2\pi\ii$:
\begin{equation}
g_{3,a}(z)=\frac{2\pi\ii}{(a-1)^2}(e^z+a^2e^{-z}-2a), \text{\quad where } a\in\C\setminus\{0,1\}.
\end{equation}
\end{enumerate}
Then the Sierpi\'{n}ski carpet parameters are dense in the bifurcation locus of $(g_a)$.
\end{cor}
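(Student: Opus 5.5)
The plan is to verify the hypotheses of Theorem \ref{thm:carpet-dense} for each of the three families, since the corollary is presented as an immediate consequence of it. There are two critical values, $v_\pm(a)=\pm2\sqrt{ab}+c$. First I will check that each family is natural. The parameters $a,b,c$ depend holomorphically on $a$. The singular values $\pm2\sqrt{ab}+c$ and their preimages move holomorphically on simply connected subdomains. The critical points are $z=\tfrac12\log(b/a)+k\pi\ii$. By \cite[Theorem 2.6]{ABF23} the family is then locally natural. Working locally suffices, because density in $\Bif$ is a local statement. There are no finite asymptotic values. Every critical point is simple, so hypothesis (a) holds with multiplicity bound $2$.

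Next I identify the passive and active critical values in each case.

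For $g_{1,a}=a(e^z+e^{-z}-2)$, the critical points are $k\pi\ii$. Even $k$ maps to $0$ and odd $k$ maps to $-4a$. So the critical value $0$ is a super-attracting fixed point, lying in a fixed immediate attracting basin. The only possibly active critical value is $v_a=-4a$, with critical point $c_a=\pi\ii$.

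For $g_{2,a}$, one checks $g(0)=0$ and $g'(0)=a-(a-1)=1$. The critical values are $1-2a\pm2\sqrt{a(a-1)}$. The parabolic fixed point at $0$ must attract at least one critical value, and by persistence of the parabolic point that value stays in its immediate basin. Hence exactly one critical value lies in a fixed immediate parabolic basin, and the other is the only possibly active one.

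For $g_{3,a}$, the critical points are $\log a+k\pi\ii$. The orbit $\log a\mapsto0\mapsto2\pi\ii\mapsto2\pi\ii$ is given. The point $2\pi\ii$ is a fixed point, and it must be repelling in order to lie in the Julia set. The derivative there is $\frac{2\pi\ii}{(a-1)^2}(1-a^2)=-2\pi\ii\frac{a+1}{a-1}$, which has modulus $>1$ except on a closed set. Near bifurcation parameters where this fails, the fixed point changes type, so such parameters are isolated or lie in a curve. This part needs care. I would argue that $\Bif$ is contained in the closure of the parameters where the point is repelling, or simply restrict to the open set where it is repelling. On that set the critical value $0$ is preperiodic in $J$, and the other critical value $g(\log a+\pi\ii)=\frac{-4a\cdot2\pi\ii}{(a-1)^2}$ is the marked one.

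Hypothesis (b) also needs the active critical point not to be persistently a Picard exceptional value. Since $g$ is a combination of $e^{\pm z}$, it omits no finite value. The critical point $c_a$ is therefore never Picard exceptional.

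It remains to confirm that the marked critical value is genuinely the only active one where bifurcation occurs. This follows from Theorem \ref{thm:J-stability}, because the other critical value is passive everywhere: it is fixed, or it lies in a persistent basin, or it is eventually mapped onto a repelling fixed point that moves holomorphically. Theorem \ref{thm:carpet-dense} then yields density in each family.

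I expect the main obstacle to be case (b), where I must justify that the parabolic basin at $0$ always captures the same critical value and stays an immediate basin as $a$ varies. I would use the fact that each petal of the multiplier-$1$ fixed point must contain a singular value. I would also use that $0$ is a simple parabolic point: $g''(0)=2a-1$, which is nonzero for $a\neq\tfrac12$, and the degenerate parameter $a=\tfrac12$ must be handled separately. A secondary obstacle is case (c), where the repelling condition on $2\pi\ii$ must hold near the bifurcation locus.
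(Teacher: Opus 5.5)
Your route is the paper's (which simply calls the corollary an immediate consequence of Theorem~\ref{thm:carpet-dense}). Your checks of naturality, simple critical points and the absence of omitted values, and of families (a) and (b), are sound once you work on small disks. In (b) the critical values $1-2a\pm2\sqrt{a(a-1)}$ are swapped by monodromy around $a=0,1$, so the marking is only local. Also, ``exactly one'' should read ``at least one, and exactly one at bifurcation parameters''. The parameter $a=\frac12$ needs no separate treatment, since $\Bif$ has no isolated points.

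The genuine gap is family (c). The multiplier $\rho(a)=-2\pi\ii(a+1)/(a-1)$ of $2\pi\ii$ satisfies $|\rho|\le1$ on the whole closed disk $\overline{D}=\{|a+1|\le|a-1|/(2\pi)\}$ about $a=-1$, where $2\pi\ii$ is superattracting. This is a disk, not isolated points or a curve. For $a\in D$ the critical value $0$ lies in the attracting basin of $2\pi\ii$, so neither alternative of hypothesis (c) of Theorem~\ref{thm:carpet-dense} is available. At $a=-1$, where $g_{3,-1}(z)=\pi\ii(\cosh z+1)$, the symmetry $z\mapsto-\bar z$ and a repelling fixed point $\ii t_0$ with $t_0\in(0,\pi)$ show that $0$ is not even in the immediate basin. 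Moreover, the whole circle $\partial D$ lies in $\Bif$, since non-persistent parabolic fixed points at $2\pi\ii$ are dense on it. So restricting to $\Omega=\{|\rho|>1\}$ gives density only in $\Bif\cap\Omega$. Your alternative claim $\Bif\subset\overline{\Omega}$ is unproved, and it would not suffice anyway: you would still need every point of $\Bif\cap\partial D$ to be a limit of $\Bif\cap\Omega$.

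One way to close the gap:
\begin{enumerate}
\item $0$ is passive for every $a$, because $g_a^{\circ n}(0)=2\pi\ii$ for all $n\ge1$. So hypothesis (b) holds globally.
\item In the proof of Theorem~\ref{thm:carpet-dense}, hypothesis (c) is used only at the constructed parabolic parameter $\lambda=\lambda(c)$, so it is enough to check it there.
\item At that parameter the free critical value is absorbed by the new parabolic cycle. Hence $2\pi\ii$ is not a Siegel point: a Siegel disk boundary is an uncountable continuum inside $\MP(g_\lambda)$, which here is $\{0,2\pi\ii\}$ plus a countable orbit accumulating only on the parabolic cycle.
\item If $2\pi\ii$ were attracting, its immediate basin $A^*$ would have to contain a singular value, necessarily $0$. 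But $g_\lambda(z+2\pi\ii)=g_\lambda(z)$ and complete invariance give $F(g_\lambda)+2\pi\ii=F(g_\lambda)$, so the Fatou component of $0$ is $A^*-2\pi\ii$. Then $A^*$ is translation invariant, hence unbounded.
\item An unbounded Fatou component contradicts the local connectivity of $J(g_\lambda)$ supplied by \cite{ARS22}, which applies because the two critical values lie in different Fatou components.
\end{enumerate}
Hence $2\pi\ii\in J(g_\lambda)$, the critical value $0$ is preperiodic in the Julia set, and the argument of Theorem~\ref{thm:carpet-dense} runs near every point of $\Bif$, including those on $\partial D$.
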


\begin{figure}[!htpb]
  \setlength{\unitlength}{1mm}
  \centering
  \includegraphics[width=0.47\textwidth]{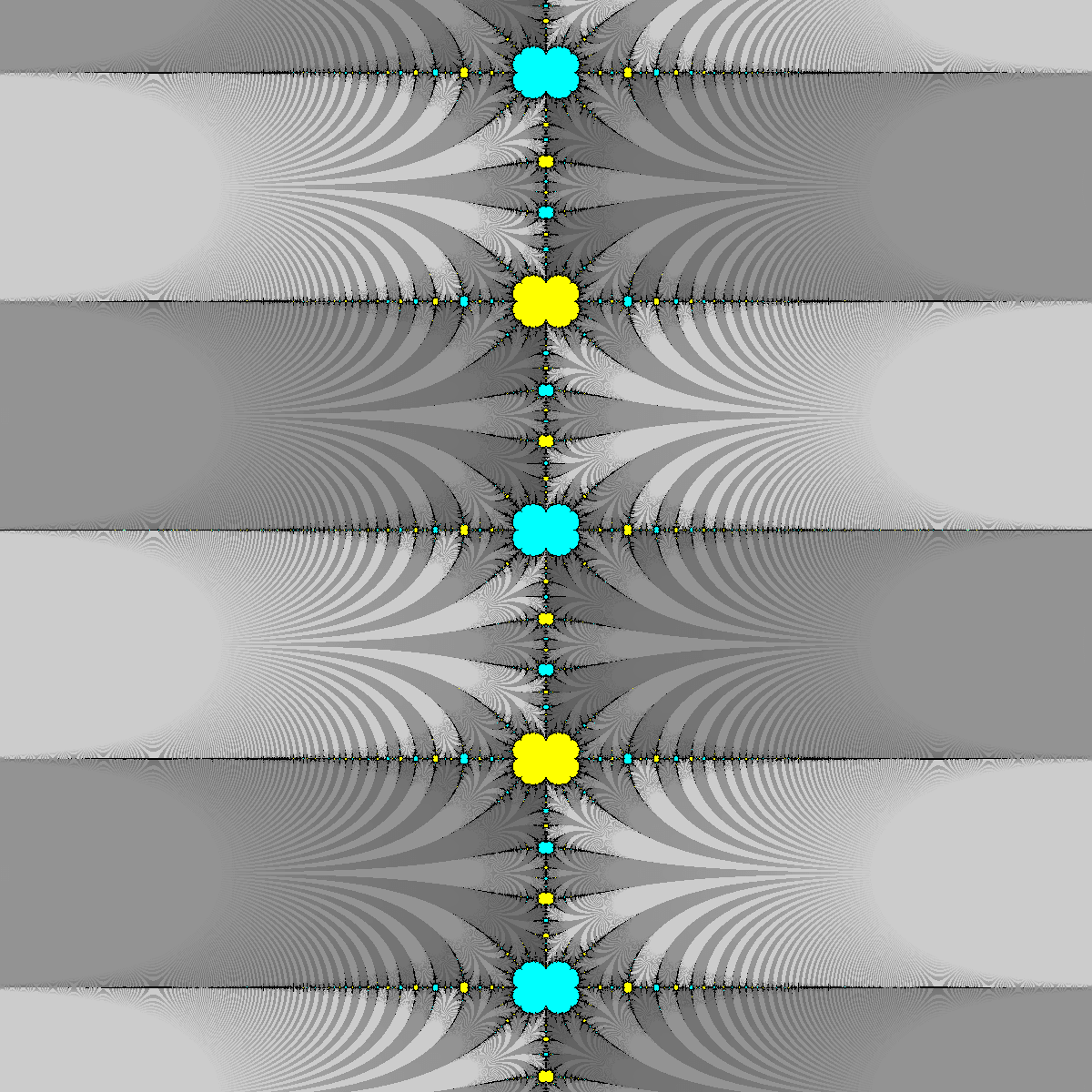} \quad
  \includegraphics[width=0.47\textwidth]{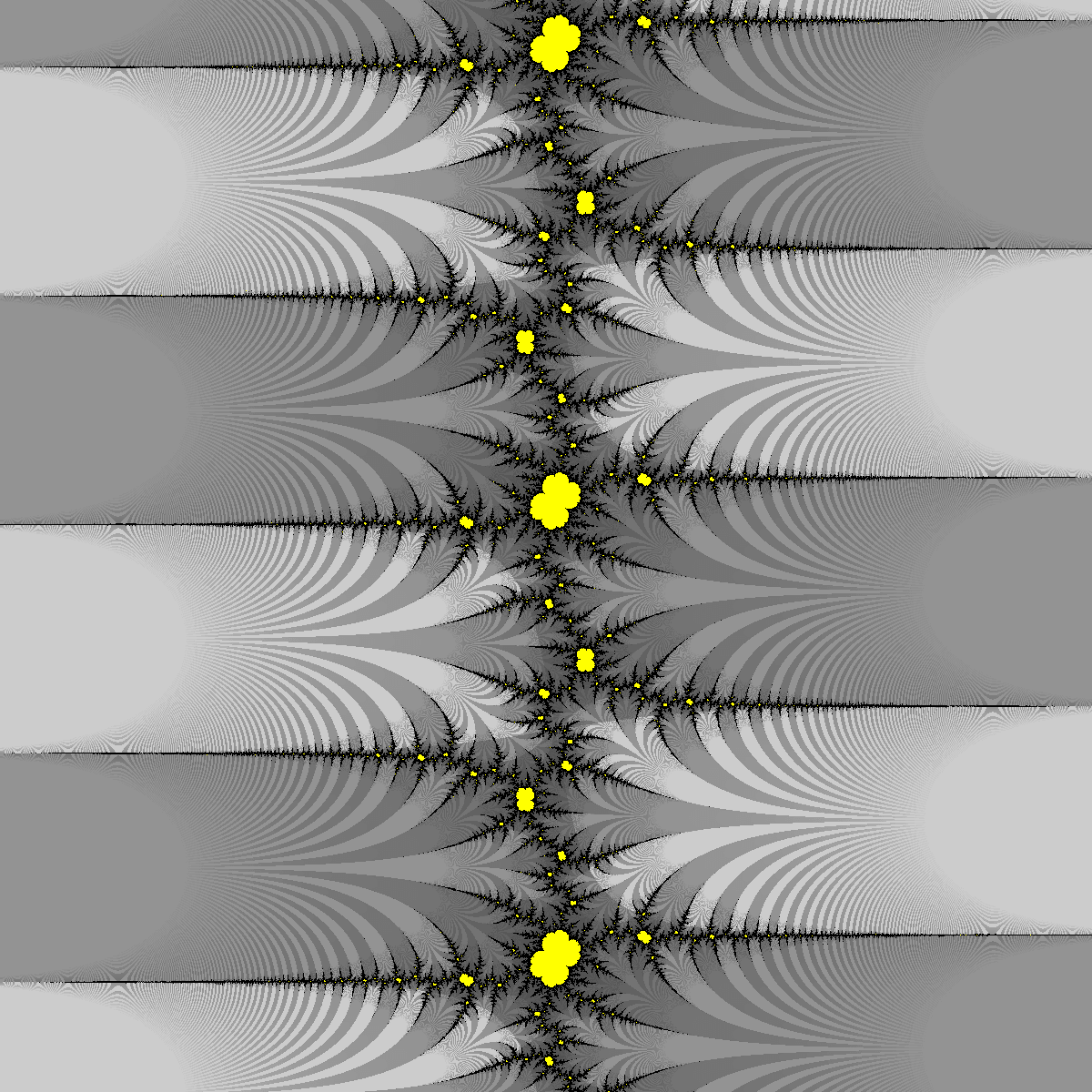}
  \caption{Julia sets of $g_{2,a_2}$ and $g_{3,a_3}$, where $a_2\approx0.5 + 2.24670472 \ii$ and $a_3\approx-2.52227895 - 1.79075517\ii$ are chosen such that $g_{2,a_2}$ has two parabolic cycles and $g_{3,a_3}$ has a parabolic cycle and a finite critical orbit in the Julia set. Both Julia sets are Sierpi\'{n}ski carpets. The cyan and yellow regions denote different parabolic basins. }
  \label{Fig:carpet-3}
\end{figure}

A direct calculation shows that the family $f_\lambda(z)=\lambda\sin^2(z)$ is conformally conjugate to the family $g_a(z)=a (e^z+e^{-z}-2)$. Hence Theorem \ref{thm:carpet-dense} can be applied to $f_\lambda$ (see Figures \ref{Fig:M-sin-2} and \ref{Fig:carpet-2}). Moreover, for any $d\geq 1$, the family $f_\lambda(z)=\lambda\sin^d z$ is semi-conjugate to
\begin{equation}
h_\lambda(z)=\lambda^2\sin^{2d}(\sqrt{z}), \text{\quad where }\lambda\in\C\setminus\{0\}.
\end{equation}
It is easy to verify that $h_\lambda$ has exactly two critical values $\{0,\lambda^2\}$ and Theorem \ref{thm:carpet-dense} can be also applied to $h_\lambda$. Hence the Sierpi\'{n}ski carpet parameters are dense in the bifurcation locus of $f_\lambda(z)=\lambda\sin^d z$ for all $d\geq 1$.

\section{Carpet Julia sets with Siegel disks}\label{sec:carpet-Siegel}

The local connectivity of transcendental entire functions with Siegel disks was studied in \cite{YZZ25}. These Siegel disks are of bounded type and their boundaries contain at least one critical point. Hence these Julia sets are not Sierpi\'{n}ski carpets. In this section, we shall ``put" some hairy Siegel disks in the transcendental Julia sets such that they are Sierpi\'{n}ski carpets.

\subsection{Quadratic hariy Siegel disks}

We first state some useful results on quadratic polynomials with Siegel disks.
For $\alpha\in\R\setminus\Q$, let
\begin{equation}
P_\alpha(z)=e^{2\pi\ii\alpha}z+z^2.
\end{equation}
If $P_\alpha$ can be locally linearizable at the origin, then the maximal linearization domain containing the origin is a simply connected Fatou component of $P_\alpha$, which is called the \textit{Siegel disk} of $P_\alpha$ centered at the origin. We denote it by $\Delta_\alpha$.

Any irrational number $\alpha\in\R\setminus\Q$ has a continued fractional expansion $\alpha=[a_0;a_1,a_2,\cdots]$, where $a_0\in\Z$ and $a_n\geq 1$ for all $n\geq 1$. Let $N\geq 2$ be a given integer. We denote
\begin{equation}\label{equ:high-type}
\HT_{N}:=\{ \alpha=[a_0;a_1,a_2,\cdots]: \ \forall\, n\ge1,\, a_n\ge N \}.
\end{equation}
The irrational numbers in $\HT_N$ are called of \textit{high type} (of $N$). The following result is proved in \cite{Che25}, and see also \cite{SY25} for a partial result.

\begin{thm}\label{thm:hairy-cycle}
There exists $N\geq 2$ such that for any Brjuno but non-Herman type number $\alpha \in \HT_{N}$, $\partial\Delta_\alpha$ is a Jordan curve which does not contain the critical point $-e^{2\pi\ii\alpha}/2$, and the post-critical set $\MP(P_\alpha)$ of $P_\alpha$ is a one-sided hairy circle which satisfies $\MP(P_\alpha)\cap (\bigcup_{k\geq 1}P_\alpha^{-k}(\overline{\Delta}_\alpha)\setminus\overline{\Delta}_\alpha)=\emptyset$.
\end{thm}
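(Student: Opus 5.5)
This statement is quoted from \cite{Che25} (see also \cite{SY25}), so the paper itself will only invoke it. Still, here is how I would organize a proof.

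\emph{Step 1: near-parabolic renormalization.} Fix $N$ large enough that Inou--Shishikura near-parabolic renormalization $\mathcal{R}$ is defined and hyperbolic on the class $\mathcal{IS}_0\cup\{P_\alpha\}$ for all rotation numbers of high type. For $\alpha\in\HT_N$ this produces an infinite tower of renormalizations $f_n=\mathcal{R}^n(P_\alpha)$ with rotation numbers $\alpha_n=1/\alpha_{n-1} \bmod 1$. Using the perturbed Fatou coordinates at each level, I would build the standard nest of sector-shaped ``petals'' $\mathcal{P}_n$ around the origin. The postcritical set is then contained in $\bigcap_n\bigcup_k P_\alpha^{\circ k}(\mathcal{P}_n)$, as in Cheraghi's description.

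\emph{Step 2: the Brjuno condition and the boundary of $\Delta_\alpha$.} Under the Brjuno condition, the Siegel disk is the union of the linearizable parts of these petals. Its boundary is the limit of the ``bottom curves'' of the Fatou coordinates at successive levels. I would show these curves converge uniformly, using the Yoccoz-type estimate that the heights $\sum \beta_{n-1}\log(1/\alpha_n)$ are controlled, to get a Jordan curve. The curve avoids the critical point because the critical value lies at height $\asymp\log(1/\alpha_n)$ above the boundary level in each renormalization. The failure of the Herman condition is what makes the orbit of the critical point not accumulate on the whole boundary but instead leave ``hairs'' above it.

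\emph{Step 3: the hairy circle, and the main obstacle.} Here the non-Herman condition is the key input. I would show that $\MP(P_\alpha)$ equals $\partial\Delta_\alpha$ together with a Cantor bouquet of arcs ("hairs") attached on the outer side. These arcs are the closures of the orbits of the critical value through the levels, and they never re-enter preimages of $\overline\Delta_\alpha$. The disjointness $\MP\cap(P_\alpha^{-k}(\overline\Delta_\alpha)\setminus\overline\Delta_\alpha)=\emptyset$ would follow if every point of $\MP$ off $\partial\Delta_\alpha$ lies in the strip between the boundary level and the critical height at some renormalization level. One then checks that iterated preimages of the Siegel disk other than $\Delta_\alpha$ itself are separated from those strips by the critical-value curves.

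I expect this last separation argument to be the hard part. It requires uniform distortion control of Fatou coordinates across infinitely many levels, which I would first try to obtain from the compactness of the Inou--Shishikura class.
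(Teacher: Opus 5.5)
You are right that the paper does not prove this statement: it is quoted from \cite{Che25}, with \cite{SY25} cited for a partial result. So your first sentence is the paper's entire treatment. The outline you append, however, would not prove the theorem, and one step in it fails.

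\textbf{Step 2 does not use the non-Herman hypothesis.} You deduce $-e^{2\pi\ii\alpha}/2\notin\partial\Delta_\alpha$ from a separation, at every renormalization level, between the critical value and the boundary curve. That mechanism uses only the high-type renormalization and the Brjuno bound. The non-Herman hypothesis never enters; you invoke it only for the hairs. The argument would therefore apply verbatim to bounded-type numbers in $\HT_N$ such as $[0;N,N,N,\dots]$. These are of Herman type, and for them $\partial\Delta_\alpha$ is classically a quasicircle passing through the critical point, so the mechanism cannot be right.

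In the Fatou coordinates of the tower there is no arithmetic-free gap between the Siegel boundary and the critical level. Whether the boundary reaches that level is decided by an infinite compositional condition along the continued fraction expansion, namely Yoccoz's characterization of the Herman class. Showing that failure of this condition forces a definite gap is exactly the high-type dichotomy ``critical point on $\partial\Delta_\alpha$ iff $\alpha$ is of Herman type'' established in \cite{SY25,Che25}.

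\textbf{Two related inaccuracies in Step 2.}
\begin{enumerate}
\item You say that for non-Herman $\alpha$ the critical orbit does not accumulate on the whole boundary. This is false: $\partial\Delta_\alpha\subset\MP(P_\alpha)$ for every Siegel disk of a polynomial. The hairs are added to the circle; they do not replace part of it.
\item Uniform convergence of your ``bottom curves'' is the actual content of the Jordan-curve assertion. It gives local connectivity, and the Jordan property then follows from the conjugacy to a rotation. But the Brjuno sum controls only the conformal radius of $\Delta_\alpha$, not the oscillation of its boundary across all levels.
\end{enumerate}

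\textbf{Step 3 states the disjointness property but does not argue it.} This is the property the paper later uses to choose $V_0$ in \S5. You can at least reduce it to one statement.
\begin{itemize}
\item Once $\partial\Delta_\alpha$ is a Jordan curve missing the critical point, $\Delta_\alpha$ and the other preimage $\Delta_\alpha'=-e^{2\pi\ii\alpha}-\Delta_\alpha$ have disjoint closures.
\item $\MP(P_\alpha)$ is forward invariant and misses $\Delta_\alpha\cup\Delta_\alpha'$.
\item Take a point of $\MP(P_\alpha)$ outside $\overline{\Delta}_\alpha$ that eventually lands in $\overline{\Delta}_\alpha$, and look at the iterate just before it first enters $\overline{\Delta}_\alpha$. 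This shows the whole statement is equivalent to $\MP(P_\alpha)\cap\partial\Delta_\alpha'=\emptyset$.
\end{itemize}
That remaining statement still needs the precise location of the hairs inside the renormalization tower. Your strip-separation argument would have to supply this, and at present it only asserts it.
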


For definitions of Brjuno numbers, Herman numbers, and the precise definition of one-sided hairy circles, see \cite{Che25}. Roughly speaking, a \emph{one-sided hairy circle} is a collection
of arcs landing on a dense subset of a Jordan curve such that every arc in the collection is
accumulated from both sides by arcs in the collection.

\subsection{Carpet Julia sets with Siegel disks}

To prove the local connectivity of a compact set in $\EC$, the following criterion is useful (see {\cite[Theorem 4.4, p.\,113]{Why42}}).

\begin{lem}[LC criterion]\label{lem:LC-criterion}
A connected compact subset $X$ in $\EC$ is locally connected if and only if the following two conditions hold:
\begin{enumerate}
\item  the boundary of every component of $\EC\setminus X$ is locally connected; and
\item  for any given $\varepsilon>0$, there are only finitely many components of $\EC\setminus X$ whose spherical diameters are greater than $\varepsilon$.
\end{enumerate}
\end{lem}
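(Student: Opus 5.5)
We prove the two directions separately; sufficiency is the direction used later in the paper, and it is where the main work lies. Throughout, all distances and diameters are spherical. We use the classical fact that a compact metric space is locally connected if and only if it is \emph{uniformly} locally connected: for every $\varepsilon>0$ there is $\delta>0$ such that any two points at distance $<\delta$ lie in a connected subset of diameter $<\varepsilon$.

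\emph{Sufficiency.} Assume (a) and (b), and fix $\varepsilon>0$.

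First, by (b) only finitely many components $V_1,\dots,V_k$ of $\EC\setminus X$ have diameter $\geq\varepsilon/3$. Each $\partial V_i$ is a locally connected continuum by (a), hence uniformly locally connected. So there is $\delta_1>0$ such that any two points of $\partial V_i$ at distance $<\delta_1$ lie in a connected subset of $\partial V_i$ of diameter $<\varepsilon/3$.

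Second, take $x,y\in X$ with $d(x,y)<\delta$, where $\delta\le\min\{\delta_1,\varepsilon/3\}$. Join them by a short geodesic arc $L$ of diameter $<\delta$. The set $L\setminus X$ is a countable union of open subarcs. Each such subarc $I$ lies in a single component $V$ of $\EC\setminus X$, and its endpoints $a_I,b_I$ lie in $\partial V\subset X$. We replace each $I$ by a connected set $C_I\subset X$ containing $a_I,b_I$, chosen as follows:
\begin{itemize}
\item if $V$ is small (diameter $<\varepsilon/3$), take $C_I=\partial V$, which has diameter $<\varepsilon/3$;
\item if $V=V_i$ is one of the big components, then $d(a_I,b_I)<\delta\le\delta_1$, so we can take $C_I\subset\partial V_i$ to be the small connected set supplied by uniform local connectivity, again of diameter $<\varepsilon/3$.
\end{itemize}
Set
\[
E:=(L\cap X)\cup\bigcup_I C_I\subset X .
\]
Every point of $E$ lies within $\varepsilon/3$ of $L$, and $L$ has diameter $<\delta\le\varepsilon/3$, so $\diam E<\varepsilon$. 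It remains only to show that $E$ is connected, which then gives uniform local connectivity of $X$.

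\emph{Main obstacle: connectedness of $E$.} Each $C_I$ is a connected set bridging the gap $I$ in $L$, and there may be infinitely many gaps. Suppose $E=A\sqcup B$ with $A,B$ nonempty and relatively clopen. Each connected $C_I$ lies entirely in $A$ or entirely in $B$, and then both $a_I$ and $b_I$ lie on the same side. Define
\[
A':=(A\cap L)\cup\bigcup_{C_I\subset A}\overline{I},
\]
and $B'$ analogously. These two sets are disjoint and cover $L$. We then check that they are closed in $L$. The point to verify is that accumulating gaps $I_n$ shrink to points of $L\cap X$; since $A$ and $B$ are closed, such limit points stay on the correct side. This partitions the connected arc $L$ into two nonempty disjoint closed sets, a contradiction. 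Hence $E$ is connected. This closedness verification is the step we expect to need the most care.

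\emph{Necessity.} Assume $X$ is locally connected, hence uniformly locally connected.

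For (a), we show that the boundary of each complementary component $V$ is locally connected. The plan is to use the unicoherence of $\EC$ (Torhorst-type argument). Suppose $\partial V$ is not locally connected at some point $p$. Then there are nearby points of $\partial V$ that can be joined by small connected sets in $X$ but not within $\partial V$. Combining such a small connected set in $X$ with a crosscut of $V$ produces a separation of $\EC$ by a connected set whose two complementary pieces both meet $\partial V$, contradicting unicoherence.

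For (b), argue by contradiction. Suppose infinitely many components $U_n$ have diameter $\ge\varepsilon$.
\begin{itemize}
\item Each $U_n$ crosses some fixed annulus of width comparable to $\varepsilon$, after passing to a subsequence.
\item The $U_n$ are disjoint, so they become thin. This yields points $x_n,y_n\in X$ on the two sides of the crossing parts of $U_n$ (or of its boundary), with $d(x_n,y_n)\to0$.
\item Uniform local connectivity joins $x_n$ and $y_n$ by connected sets $K_n\subset X$ with $\diam K_n\to0$.
\item But $U_n\cup K_n$ would then separate in a way forbidden by unicoherence, or would force $U_n$ itself to be small. Either way we reach a contradiction.
\end{itemize}
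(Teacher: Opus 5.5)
The sufficiency argument is correct and self-contained. It is also the only direction the paper uses (in the Siegel-disk construction); the paper gives no proof of the lemma and simply cites Whyburn's \emph{Analytic Topology}.

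\textbf{Sufficiency.} The closedness check you flag goes through. Distinct gaps have lengths tending to $0$, since they are disjoint subarcs of $L$. So if the $\overline{I_n}\subset A'$ accumulate at $p$, then $p\in L\cap X\subset E$ is a limit of endpoints $a_{I_n}\in A$, and $p\in A$ because $A$ is closed in $E$. A limit point lying in an open gap is handled because gaps are open in $L$. You use one fact tacitly, both when calling $\partial V_i$ a continuum and when taking $C_I=\partial V$ for small $V$: every complementary component $V$ of the continuum $X$ has connected boundary. State it. It follows from unicoherence of $\EC$, since $\overline V$ and $\EC\setminus V=X\cup\bigcup_{W\neq V}\overline W$ are closed and connected with intersection $\partial V$.

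\textbf{Necessity.} This direction is not proved; both parts have a genuine gap.

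In (a), the final step yields no contradiction. Unicoherence says that if $\EC=P\cup R$ with $P,R$ closed and connected, then $P\cap R$ is connected. It does not forbid a continuum from separating $\EC$ into two pieces that both meet $\partial V$; any small Jordan curve through a point of $\partial V$ does exactly that. What is missing is the mechanism that actually uses local connectivity of $X$. One way:
take a crosscut $\sigma$ of $V$ of small diameter with endpoints $x,y\in\partial V$, for instance the image of a short circular arc under a Riemann map $\phi:\mathbb{D}\to V$, chosen by the length--area lemma. Join $x,y$ by a small arc $\beta\subset X$ (a Peano continuum is uniformly locally arcwise connected). The small Jordan curve $\sigma\cup\beta$ must then enclose the component of $V\setminus\sigma$ that does not contain a fixed base point. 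Hence crosscut neighbourhoods shrink, $\phi$ extends continuously to $\overline{\mathbb{D}}$, and $\partial V$, as a continuous image of the circle, is locally connected.

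In (b), ``points on the two sides of the crossing parts of $U_n$'' is not well defined for a wiggly crossing arc. A small connected $K_n\subset X$ joining two such points gives no contradiction by itself, because inside a small disc $K_n$ can slip between different pieces of $U_n$. Neither unicoherence nor ``forcing $U_n$ small'' fills this. One workable repair:
\begin{enumerate}
\item[(1)] A Hausdorff limit of the crossing arcs $\gamma_n\subset U_n$ is a continuum crossing the annulus, and it lies in $X$: a limit point inside a complementary component $U$ would force $U=U_n$ for infinitely many $n$.
\item[(2)] Pick $z$ on this continuum on the middle circle. Pick a connected neighbourhood $N$ of $z$ in $X$ of diameter less than a quarter of the annulus width, and $\eta>0$ with $X\cap B(z,\eta)\subset N$.
\item[(3)] Pick three arcs $\gamma_{n_1},\gamma_{n_2},\gamma_{n_3}$ passing within $\eta/4$ of $z$. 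They cut the open annulus into three quadrilaterals, each adjacent to exactly two of the arcs. $N$ is connected, misses the arcs and stays away from the boundary circles, so it lies in a single quadrilateral.
\item[(4)] The first point where a short segment from $\gamma_{n_i}$ to another of the arcs leaves $U_{n_i}$ is a point of $X\cap B(z,\eta)$ lying in a quadrilateral adjacent to $\gamma_{n_i}$. No quadrilateral is adjacent to all three arcs, which is the contradiction.
\end{enumerate}
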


Bergweiler and Morosawa proved the following result \cite[Theorem 1]{BM02}, which can be used to control the size of Fatou components of certain semi-hyperbolic entire functions. See \cite[Lemma 2.4]{YZZ25} for the present formulation.

\begin{lem}\label{lem:semi-hyperbolic}
Let $f$ be a transcendental entire function, and let $U_0, V_0$ be two bounded Jordan domains and $D_0\geq 1$. Suppose $V_0$ is not contained in any Siegel disk of $f$ and $U_0\Subset V_0$.
Then for any $\varepsilon>0$, there exists $N>0$ such that for any component $V_n$ of $f^{-n}(V_0)$ with $\deg(f^{\circ n}:V_n\to V_0)\leq D_0$, $\diam_{\EC}(U_n)<\varepsilon$ for all $n\geq N$, where $U_n$ is any connected component of $f^{-n}(U_0)$ contained in $V_n$.
\end{lem}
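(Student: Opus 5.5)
This is the Bergweiler--Morosawa shrinking lemma, and the plan is to prove it by Koebe distortion along inverse branches, with the degree bound $D_0$ handled by bounded-degree distortion estimates. Fix $\varepsilon>0$ and suppose the conclusion fails. Then there are $n_k\to\infty$, components $V_{n_k}$ of $f^{-n_k}(V_0)$ with $\deg(f^{\circ n_k}:V_{n_k}\to V_0)\le D_0$, and components $U_{n_k}\subset V_{n_k}$ of $f^{-n_k}(U_0)$ with $\diam_{\EC}(U_{n_k})\ge\varepsilon$. Since $V_0$ is bounded and $f^{\circ n}$ is proper of bounded degree on $V_n$, each $V_n$ is a bounded simply connected domain. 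This uses the maximum principle and the fact that $f$ is entire: a preimage component of a Jordan domain under a proper map of finite degree is bounded and simply connected.

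First, I handle the degree-one case, where the inverse branch $\phi_n=(f^{\circ n}|_{V_n})^{-1}:V_0\to V_n$ is univalent. Choose an intermediate Jordan domain $U_0\Subset W_0\Subset V_0$. By Koebe distortion, the spherical sizes of $\phi_n(U_0)$ and $\phi_n(W_0)$ are comparable up to constants depending only on the modulus of $V_0\setminus\overline{U_0}$. In the general case $\deg\le D_0$, I use the standard bounded-degree substitute. The annulus $V_0\setminus\overline{U_0}$ has modulus $\mu>0$. Each component of its preimage that separates $U_n$ from $\partial V_n$ has modulus at least $\mu/D_0$, so $V_n\setminus\overline{U_n}$ contains an annulus of modulus $\ge\mu/D_0$. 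In particular, if $\diam(U_{n_k})\ge\varepsilon$, then $V_{n_k}$ contains a round spherical disk of definite radius $r=r(\varepsilon,\mu,D_0)$, or else $V_{n_k}$ contains a region of definite spherical size around $U_{n_k}$.

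The key step is to derive a contradiction with the fact that $V_0$ meets $J(f)$, or more precisely to exclude normal-family behavior. Pass to a subsequence so that the $U_{n_k}$ contain points converging to some $z^*\in\EC$ and contain a continuum of spherical diameter $\ge\varepsilon$ through them. Using the modulus bound, obtain a fixed spherical disk or continuum $E$ with $f^{\circ n_k}(E)\subset V_0$ for infinitely many $k$, where $E$ is a small neighborhood of a nondegenerate limit continuum of the $U_{n_k}$. Hence the family $\{f^{\circ n_k}\}$ is uniformly bounded on an open set $\Omega$, namely the interior of the Hausdorff limit of the $V_{n_k}$, and by the modulus estimate $\Omega$ is nonempty. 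Then $\Omega\subset F(f)$ by Montel's theorem, because the iterates avoid the complement of $V_0$ near points of $\Omega$ and $f$ is entire, so the limit maps are bounded holomorphic. Since $\Omega$ is eventually contained in $V_{n_k}$ and $f^{\circ n_k}(\Omega)\subset V_0$, the limit functions $h$ of $f^{\circ n_k}|_\Omega$ are nonconstant. Indeed, $f^{\circ n_k}$ maps $U_{n_k}$, which is large, onto all of $U_0$, and the degree bound prevents collapse. By the classification of Fatou components, a Fatou component admitting nonconstant limit functions of iterates must eventually land in a Siegel disk or a cycle of rotation domains; for entire $f$, the only rotation domains are Siegel disks. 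The limits on a wandering or Baker domain are constant (Fatou, Baker), so the nonconstant limit points lie in a Siegel disk, and $h(\Omega)$ would be an open subset of a Siegel disk containing much of $U_0$.

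To conclude, I need to upgrade ``$h(\Omega)$ meets $U_0$ in an open set'' to ``$V_0$ is contained in a Siegel disk''. The plan is to use the full preimage structure: $f^{\circ n_k}(V_{n_k})=V_0$, and the Hausdorff limits force the limit map to cover $V_0$ up to a degree-$D_0$ branched cover. This contradicts the hypothesis that $V_0$ is not contained in any Siegel disk. I expect this last step to be the main obstacle, because one must carefully show that the limit map is nonconstant and covers all of $V_0$ rather than just part of $U_0$, and must handle the possibility that the $V_{n_k}$ escape to $\infty$. If the $V_{n_k}$ escape to infinity, then spherical diameter $\ge\varepsilon$ is impossible for domains near $\infty$ except by containing huge Euclidean regions, and these would contain Julia points where iterates are unbounded, again giving a contradiction via the blow-up property of $J(f)$.
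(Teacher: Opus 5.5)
The paper gives no proof of this lemma. It quotes it from Bergweiler--Morosawa \cite[Theorem 1]{BM02}, in the form of \cite[Lemma 2.4]{YZZ25}, so your argument has to stand on its own. Its skeleton is sound: a contradiction sequence, a modulus lower bound around $U_{n_k}$, a limit continuum $L$ of the $\overline{U_{n_k}}$ with a neighbourhood $\Omega$ such that $f^{\circ n_k}(\Omega)\subset V_0$, a nonconstant limit $h$, and the classification of limit functions forcing a Siegel disk. But it breaks off exactly at the step you flag, which is where the hypothesis on $V_0$ has to enter.

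What your argument actually yields is $h(L)\supseteq\overline{U_0}$: for $w\in U_0$, pick $z_k\in U_{n_k}$ with $f^{\circ n_k}(z_k)=w$ and pass to a limit point in $L$. This gives only $U_0\subset\Delta$ for some Siegel disk $\Delta$, which is no contradiction, since $U_0$ may lie in a Siegel disk while $V_0$ crosses $\partial\Delta$. Your proposed upgrade via Hausdorff limits of the $V_{n_k}$ does not work. The map $h$ is defined only on $\Omega$, a neighbourhood of $L$, whereas the $V_{n_k}$ typically meet $J(f)$, need not converge, and can be unbounded, so there is no limit map on any set mapped onto $V_0$.

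The missing idea is an exhaustion. Run your argument with $U_0$ replaced by an arbitrary Jordan domain $W$ with $U_0\Subset W\Subset V_0$. The component $W_{n_k}\supseteq U_{n_k}$ of $f^{-n_k}(W)$ still has $\deg(f^{\circ n_k}:W_{n_k}\to W)\le D_0$ and $\diam_{\EC}(W_{n_k})\ge\varepsilon$. Also, $V_{n_k}\setminus\overline{W_{n_k}}$ still contains an annulus separating $W_{n_k}$ from $\EC\setminus V_{n_k}$, with modulus bounded below in terms of $\mathrm{mod}(V_0\setminus\overline{W})$ and $D_0$. So $W$ lies in a Siegel disk, necessarily the Fatou component $\Delta$ containing $U_0$. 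Letting $W$ exhaust $V_0$ gives $V_0\subset\Delta$, the required contradiction.

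Several intermediate claims also need repair.

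\emph{Boundedness of $V_n$.} The $V_n$ need not be bounded: for $f=\exp$ and $V_0$ a disk with $0\in\partial V_0$, each component of $f^{-1}(V_0)$ maps univalently onto $V_0$ and is unbounded. Only $U_n$ is bounded, by properness. So your closing remark about the $V_{n_k}$ escaping to $\infty$ is beside the point. What keeps $L$ away from $\infty$ is that $\EC\setminus V_n$ always contains $\infty$ and a periodic cycle disjoint from $\overline{V_0}$ (one exists because $f^{\circ 2}$ has infinitely many fixed points). Teichm\"uller's modulus estimate then turns the annulus around $U_n$ into $\mathrm{dist}_{\EC}(U_n,\EC\setminus V_n)\ge\delta$ with $\delta>0$ independent of $n$.

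\emph{The modulus bound.} Your bound $\mu/D_0$ presumes that the preimage of $V_0\setminus\overline{U_0}$ is an annulus, which fails if that annulus contains critical values. Split it into $D_0$ nested subannuli; one of them avoids the at most $D_0-1$ critical values, giving $\mu/D_0^2$.

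\emph{The Fatou set.} Montel applied to $\{f^{\circ n_k}\}$ only gives normality of that subsequence. To get $\Omega\subset F(f)$ you need the blowing-up property: if $\Omega$ met $J(f)$, then $f^{\circ n}(\Omega)$ would contain a given non-exceptional point outside $\overline{V_0}$ for all large $n$.

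\emph{Minor points.} Nonconstancy of $h$ comes from $f^{\circ n_k}(U_{n_k})=U_0$, not from the degree bound. The Koebe paragraph plays no role and can be dropped.
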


\begin{proof}[Proof of Theorem \ref{thm:carpet-Siegel}]
We will construct Sierpi\'{n}ski carpet Julia sets with Siegel disks in the sine family
\begin{equation}
f_\lambda(z)=\lambda\sin z, \text{\quad where }\lambda\in\C\setminus\{0\}.
\end{equation}
The critical values of $f_\lambda$ are $\lambda$ and $-\lambda$.
Let $\alpha$ be an irrational number in Theorem \ref{thm:hairy-cycle}.
Note that all critical points of $f_\lambda$ have local degree two.
By Theorem \ref{thm-univ}, there exist $p\geq 1$ and a polynomial-like map $f_{\lambda_0}^{\circ p}: U' \rightarrow V'$ which is hybrid conjugate to $P_\alpha(z)=e^{2\pi\ii\alpha}z+z^2$ such that $U'$ contains a Siegel disk $\Delta^{\lambda_0}$  and also the critical value $\lambda_0$. Since $f_\lambda$ is an odd function, by the symmetry, $f_{\lambda_0}^{\circ p}: -U'\rightarrow -V'$ is also a polynomial-like map which is hybrid conjugate to $P_\alpha$, where $-U'$ contains the Siegel disk $-\Delta^{\lambda_0}$  and also the critical value $-\lambda_0$. It is easy to see that $\partial\Delta^{\lambda_0}\cap\partial(-\Delta^{\lambda_0})=\emptyset$.

It is well know that $f_\lambda$ has neither Baker domains nor wandering domains for all $\lambda$ (see \cite{GK86} or \cite{EL92}). Hence all Fatou components of $f_\lambda$ are eventually periodic. Note that $f_{\lambda_0}$ has no attracting and parabolic basins. Thus each Fatou component of  $f_{\lambda_0}$ is eventually iterated to $\Delta^{\lambda_0}$ or $-\Delta^{\lambda_0}$. Since the Fatou set of $f_{\lambda_0}$ does not contain any critical values and $\Delta^{\lambda_0}$ is a Jordan domain, we conclude that all Fatou components of $f_{\lambda_0}$ are bounded Jordan domains by \cite[Proposition 2.9]{BFR15}.

In the following we prove that the spherical diameters of the Fatou components of $f_{\lambda_0}$ tend to zero.
To make the idea clear and for simplicity, we assume that $p=1$, i.e., $f_{\lambda_0}: U' \rightarrow V'$ is a polynomial-like map of degree two (the argument for $p\geq 2$ is completely similar).
By Theorem \ref{thm:hairy-cycle}, there exists a Jordan domain $V_0$ containing $\overline{\Delta^{\lambda_0}}$ such that
\begin{equation}
\overline{V}_0\cap\overline{V}_1=\emptyset \text{\quad and\quad}
\overline{V}_1\cap\MP(f_{\lambda_0})=\emptyset,
\end{equation}
where $V_1$ is any component of $f_{\lambda_0}^{-1}(V_0)\setminus V_0$ containing a component of $f_{\lambda_0}^{-1}(\Delta^{\lambda_0})\setminus \Delta^{\lambda_0}$.
Applying Lemma \ref{lem:semi-hyperbolic} to $U_0=\Delta^{\lambda_0}$, $V_0$ and $D_0=1$, we conclude that for any $\varepsilon>0$, there exists $N>0$ such that for any component $V_n$ of $f_{\lambda_0}^{-n}(V_0)\setminus\bigcup_{k=0}^{n-1}f_{\lambda_0}^{-k}(V_0)$ satisfying $f_{\lambda_0}^{\circ (n-1)}(V_n)\cap V_0=\emptyset$, we have $\deg(f_{\lambda_0}^{\circ n}: V_n\to V_0)= 1$ and $\diam_{\EC}(U_n)<\varepsilon$ for all $n\geq N$, where $U_n$ is any connected component of $f_{\lambda_0}^{-n}(U_0)\setminus\bigcup_{k=0}^{n-1}f_{\lambda_0}^{-k}(U_0)$ contained in $V_n$. By the symmetry of $f_{\lambda_0}$, we have the same conclusion for the Siegel disk $-\Delta^{\lambda_0}$ and its preimages.

Since $U_0=\Delta^{\lambda_0}$ is a Jordan domain, for any given $n\geq 1$, the Fatou components $U_n$'s of $f_{\lambda_0}$ satisfying $f_{\lambda_0}^{\circ n}(U_n)=U_0$ and $f_{\lambda_0}^{\circ (n-1)}(U_n)\neq U_0$ can only accumulate at infinity. Hence for given $n\geq 1$, there are only finitely many of $U_n$'s having spherical diameters which are greater than $\varepsilon$.
Thus there are only finitely many Fatou components whose spherical diameter is greater than $\varepsilon$.
By Lemma \ref{lem:LC-criterion}, $J(f_{\lambda_0})$ is locally connected. Since $\partial\Delta^{\lambda_0}\cap\partial(-\Delta^{\lambda_0})=\emptyset$, by a similar proof to Theorem \ref{thm:carpet-hyper}, it follows that all Fatou components of $f_{\lambda_0}$ are bounded by pairwise disjoint Jordan curves. Hence $J(f_{\lambda_0})$ is a Sierpi\'{n}ski carpet.
\end{proof}

\section{Carpet Julia sets with wandering domains}\label{sec:carpet-wandering}

In this section, we construct some transcendental entire functions having carpet Julia sets with wandering domains. Since the topology of the boundaries of wandering domains is not easy to study directly, inspired by \cite[\S 5]{Bak84}, we study the boundaries of periodic Fatou components of holomorphic self-maps of $\C^*:=\C\setminus\{0\}$ first and then use the logarithmic lift method to obtain the related results on wandering domains.

\begin{lem}[see Figure \ref{Fig:carpet-wandering}]\label{lem:puncture}
If $\lambda=\pm\sqrt{1+4k^2\pi^2}$ for $k\in\Z\setminus\{0\}$, then
\begin{equation}
g_\lambda(z)=z e^{\frac{1}{2}\lambda(z-\frac{1}{z})}:\C^*\to\C^*
\end{equation}
has two super-attracting fixed points on the unit circle and the Julia set of $g_\lambda$ is a Sierpi\'{n}ski carpet.
\end{lem}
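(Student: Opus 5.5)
The plan is to verify the dynamical claims by direct computation, and then deduce the carpet property from local connectivity together with disjointness of Fatou boundaries, as in Lemma \ref{lem:carpet-criterion} (adapted to $\C^*$, with both punctures $0,\infty$ added). First, the identity $g_\lambda'(z)/g_\lambda(z)=\frac1z+\frac{\lambda}{2}(1+\frac1{z^2})$ shows that the critical points solve $\lambda z^2+2z+\lambda=0$. With $\lambda^2=1+4k^2\pi^2$ this gives $c_\pm=(-1\pm 2k\pi\ii)/\lambda$. Since $|c_\pm|=1$, both critical points are simple and lie on the unit circle $\T$. For $z=e^{\ii\theta}$ we have $g_\lambda(e^{\ii\theta})=e^{\ii(\theta+\lambda\sin\theta)}$. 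At $c_\pm$ we have $\lambda\sin\theta=\pm 2k\pi$, so $g_\lambda(c_\pm)=c_\pm$, and these are super-attracting fixed points. Moreover $0,\infty$ are omitted and there are no asymptotic values in $\C^*$. Hence $g_\lambda$ is a hyperbolic self-map of $\C^*$ whose postsingular set $\{c_+,c_-\}$ is finite and lies in the Fatou set. I will also record two symmetries: $g_\lambda(\bar z)=\overline{g_\lambda(z)}$ since $\lambda$ is real, and $g_\lambda(1/\bar z)=1/\overline{g_\lambda(z)}$. Thus $J(g_\lambda)$ is invariant under complex conjugation and under inversion in $\T$. The conjugation swaps $c_+$ and $c_-$, while the inversion fixes each of them.

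Second, for local connectivity I will not use the logarithmic lift $F(w)=w+\lambda\sinh w$, because its postsingular set is unbounded and it is not in $\MB$. Instead I will adapt the argument of \cite{BFR15} directly to $\C^*$. In the hyperbolic metric of $\C^*\setminus\overline{V}$, where $V$ is a small forward-invariant neighbourhood of $\{c_\pm\}$, the map $g_\lambda$ expands uniformly near $J(g_\lambda)$. Since each critical point is simple and each Fatou component contains at most one critical value, pullbacks of $V$ have degree at most $2$. Lemma \ref{lem:semi-hyperbolic} (whose proof is local and applies to $g_\lambda$) then shows that Fatou components of high preimage order have small spherical diameter. In addition, preimages of a fixed order can accumulate only at $0$ and $\infty$. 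The immediate basins $U_\pm$ are Jordan domains, by the same argument as in \cite[Theorem 1.10]{BFR15}, and every other Fatou component maps univalently or with degree $2$ onto one of them, so it is a Jordan domain as well. Lemma \ref{lem:LC-criterion} then gives local connectivity of $J(g_\lambda)\cup\{0,\infty\}$. I will also check that $J(g_\lambda)\cup\{0,\infty\}$ is connected and that all Fatou components are bounded away from $0,\infty$: the former because no Fatou component can be multiply connected in a way that separates $0$ from $\infty$, the latter because of the maximum principle on attracting basins. This step, namely transporting the expanding-metric machinery from entire functions to $\C^*$, is where I expect the most technical work. I would first try to do it by conjugating with the lift only locally and quoting \cite{BFR15,BM02} verbatim.

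Third, disjointness of boundaries. Since $U_\pm$ are preserved by the inversion (each contains a critical point on $\T$), if $\partial U_+\cap\partial U_-$ contained a point $z_0$, it would also contain $\bar z_0$, $1/\bar z_0$ and $1/z_0$. The union $\overline{U_+}\cup\overline{U_-}$ would then contain a closed curve separating $0$ from $\infty$, or, if $z_0\in\T$, would share a point with $\T$ between the two basins. For the circle map $\theta\mapsto\theta+\lambda\sin\theta$, the basins on $\T$ are separated by repelling fixed points such as $\theta=0,\pi$ (multipliers $1\pm\lambda$). The case $z_0\in\T$ is therefore excluded if the intersections $U_\pm\cap\T$ are open arcs not reaching these points, and I will prove that they are. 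In the other case, a bounded complementary region of $\overline{U_+}\cup\overline{U_-}$ in $\C^*$ would contain Julia points whose forward orbits stay in a compact set avoiding the expanding dynamics, which is impossible by the maximum principle. Once $\partial U_+\cap\partial U_-=\emptyset$, the pullback argument in the proof of Theorem \ref{thm:carpet-hyper} (any tangency forces a critical point on a boundary, contradicting hyperbolicity) gives pairwise disjoint Jordan boundaries for all Fatou components. Together with local connectivity and the existence of infinitely many components, this shows that $J(g_\lambda)$ is a Sierpi\'{n}ski carpet.
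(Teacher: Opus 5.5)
Your first step is correct, and your overall plan is the paper's. Local connectivity comes from transplanting the hyperbolic machinery of \cite{BFR15} to $\C^*$. Then $\partial U_+\cap\partial U_-=\emptyset$ comes from the symmetry $\sigma(z)=1/\bar z$, the circle dynamics and the maximum principle. Finally the pullback argument of Theorem \ref{thm:carpet-hyper} finishes. The gap is in the disjointness step, which is the real content of the lemma.

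For $z_0\notin\T$, the claim that $\overline{U_+}\cup\overline{U_-}$ would contain a closed curve separating $0$ from $\infty$ does not follow. The $\sigma$-symmetric curve one can actually build goes from $z_0$ to $1/\bar z_0$ inside $\overline{U_+}$ through a point of $U_+\cap\T$, and returns inside $\overline{U_-}$ through a point of $U_-\cap\T$. This curve separates $1$ from $-1$. Both of its complementary components are $\sigma$-invariant, so $0$ and $\infty=\sigma(0)$ lie on the same side. (If, say, $z_0$ is real and $z_0,1/z_0$ are the only contact points, then no curve in the union separates $0$ from $\infty$ at all.) Even granting such a curve, there is no contradiction: the maximum principle applied to $g_\lambda^{\circ n}$ and $1/g_\lambda^{\circ n}$ only controls regions avoiding both $0$ and $\infty$. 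Your fallback, that some complementary region in $\C^*$ ``would contain Julia points'', is exactly what has to be proved; a priori such a region could be a Fatou component. The missing idea is to use the curve above. Since $U_+\cap\T$ and $U_-\cap\T$ lie in opposite open semicircles, one of the repelling fixed points $\pm1$ is enclosed in a domain $R\subset\C^*$ with $\partial R\subset\overline{U_+}\cup\overline{U_-}$. The maximum principle then puts $R$ in the Fatou set, a contradiction.

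The case $z_0\in\T$ is also not settled. You need $\overline{U_\pm}\cap\T=\overline{U_\pm\cap\T}$. This holds because $\sigma$ acts on the Jordan curve $\partial U_\pm$ as an orientation-reversing involution, so it has exactly two fixed points there. More importantly, you need these closed arcs to miss $\pm1$. Knowing that $\pm1$ are repelling fixed points only gives $U_+\cap\T\subset\{e^{\ii\theta}:0<\theta<\pi\}$; it does not exclude $\pm1\in\partial U_+\cap\partial U_-$. Compare $f_{\pi/2}$ in Theorem \ref{thm:carpet-sin-d}, where two basins touch at a repelling fixed point.

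The paper supplies the missing input (for $\lambda>0$ and $k>0$; the case $\lambda<0$ is conjugate via $z\mapsto -z$). Write $c_+=e^{\ii\theta_+}$ and $\phi(\theta)=\theta+\lambda\sin\theta$. Then $\phi$ increases on $[0,\theta_+]$ from $0$ to $\theta_++2k\pi$, and decreases on $[\theta_+,\pi]$ to $\pi$. So there are $\theta_1\in(0,\theta_+)$ and $\theta_2\in(\theta_+,\pi)$ with $g_\lambda(e^{\ii\theta_1})=-1$ and $g_\lambda(e^{\ii\theta_2})=1$. These Julia points confine $U_+\cap\T$ to $\{e^{\ii\theta}:\theta_1<\theta<\theta_2\}$, whose closure avoids $\pm1$.

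Finally, your local-connectivity sketch has two inaccuracies. Iterated pullbacks of a neighbourhood of $\{c_\pm\}$ through the critical points do not have degree at most $2$. Also, to shrink whole Fatou components you must pull back a neighbourhood of $\overline{U_\pm}$, and the degree bound has to be arranged there. The paper sidesteps this by using the expanding metric of \cite{BFR15}.
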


\begin{proof}
Note that $g_\lambda$ has no asymptotic values in $\C^*$. For each $\lambda\neq 0$, a direct calculation shows
\begin{equation}
g_\lambda'(z)=\left(1+\frac{\lambda}{2}\Big(z+\frac{1}{z}\Big)\right)e^{\frac{1}{2}\lambda(z-\frac{1}{z})}.
\end{equation}
Hence $g_\lambda$ has exactly two critical points $(-1\pm\sqrt{1-\lambda^2})/\lambda$ of local degree $2$ if $\lambda\neq \pm 1$ and a critical point of local degree 3 if $\lambda=\pm 1$.

Considering the equations $g_\lambda'(z)=0$ and $g_\lambda(z)=z$, we obtain
\begin{equation}
\left\{
\begin{array}{ll}
1+\frac{\lambda}{2}\Big(z+\frac{1}{z}\Big)=0 \\
\frac{\lambda}{2}\Big(z-\frac{1}{z}\Big)=2k\pi\ii \text{ for some } k\in\Z.
\end{array}
\right.
\end{equation}
Hence we have the solutions
\begin{equation}
\left\{
\begin{array}{ll}
\lambda_+=\sqrt{1+4k^2\pi^2} \\
z_+=\frac{-1+2k\pi\ii}{\sqrt{1+4k^2\pi^2}}
\end{array}
\right.
\quad \text{or}\quad
\left\{
\begin{array}{ll}
\lambda_+'=-\sqrt{1+4k^2\pi^2} \\
z_+'=\frac{1-2k\pi\ii}{\sqrt{1+4k^2\pi^2}}.
\end{array}
\right.
\end{equation}
Therefore, if $\lambda=\lambda_+=\sqrt{1+4k^2\pi^2}$ (resp., $\lambda=\lambda_+'$) for $k\in\Z\setminus\{0\}$, then $g_\lambda$ has two super-attracting fixed points $\{z_+,\overline{z}_+\}$ (resp., $\{z_+',\overline{z}_+'\}$) on the unit circle.

We first prove that $J(g_\lambda)$ is locally connected for $\lambda=\lambda_+$ (the case for $\lambda=\lambda_+'$ is completely similar). We only give a sketch here since the proof is similar to that of hyperbolic entire functions in \cite{BFR15}.
Note that $g_\lambda:\C^*\to \C^*$ has exactly two singular values in $\C^*$ and hence is of finite type. Thus $g_\lambda$ has neither wandering domains (see \cite{Kot87}, \cite{Mak87} or \cite{Kee88}) nor Baker domains (see \cite[Theorem 1.1]{FM17}). Since $g_\lambda$ has exactly two super-attracting fixed points, obviously, $g_\lambda$ has no parabolic basins, Siegel disks and Herman rings. Hence $g_\lambda: \C^*\to \C^*$ is hyperbolic (i.e., the set of singular values is contained in a bounded annular neighborhood of the unit circle and in attracting basins).

Similar to \cite[Proposition 2.2]{BFR15} (see also \cite[Lemma 5.1]{Rem09b} and \cite[Proposition 3.4]{Mih10}), there exist a compact set $K$ of $\C^*$ containing two super-attracting fixed points $\{z_+,\overline{z}_+\}$ and a constant $\eta>1$, such that $g_\lambda(K) \subset \operatorname{int}(K)$ and
\begin{equation}
\|D g_\lambda(z)\|_W \geq \eta>1
\end{equation}
for all $z \in V$, where $W:=\C^* \setminus K$, $V:=g_\lambda^{-1}(W)$ and $\|D g_\lambda\|_W$ denotes the derivative of $g_\lambda$ with respect to the hyperbolic metric of $W$.
Let $U_+$ and $U_-$ be the immediate super-attracting basins containing $z_+$ and $z_-:=\overline{z}_+$ respectively. By a similar proof of \cite[Theorem 1.10]{BFR15}, there exists a sequence of parameterized continuous curves in $U_+$ (resp., $U_-$) converging uniformly to $\partial U_+$ (resp., $\partial U_-$) and hence $U_+$, $U_-$ are simply connected and bounded in $\C^*$, and moreover, their boundaries are locally connected. Similar to \cite[Corollary 1.8]{BFR15}, $J(g_\lambda)$ is locally connected.

Note that $g_\lambda(\T)=\T$, where $\T$ is the unit circle. Specifically, if $z=e^{\ii\theta}\in\T$, then
\begin{equation}
g_\lambda(e^{\ii\theta})=e^{\ii(\theta+\lambda\sin\theta)}.
\end{equation}
It is easy to see that $1$ (corresponds to $\theta=0$) and $-1$  (corresponds to $\theta=\pi$) are repelling fixed points of $g_\lambda$.
Since $\lambda=\lambda_+=\sqrt{1+4k^2\pi^2}>2k\pi\geq 2\pi$, there exist $\theta_1\in(0,\frac{\pi}{2})$ and $\theta_2\in(\frac{\pi}{2},\frac{3\pi}{2})$ such that $g_\lambda(e^{\ii\theta_1})=-1$ and $g_\lambda(e^{\ii\theta_2})=1$. Hence $e^{\ii\theta_1}$, $e^{\ii\theta_2}$, $1$ and $-1$ are not contained in $U_+\cup U_-$. By the symmetry (the Julia set of $g_\lambda$ is symmetric about the unit circle) and the maximum principle, $U_+$ and $U_-$ are Jordan domains and their boundaries are disjoint. This implies that all Fatou components of $g_\lambda$ are Jordan domains and their boundaries are pairwise disjoint. Thus $J(g_\lambda)$ is a Sierpi\'{n}ski carpet.
\end{proof}

\begin{rmk}
If $\lambda=1$ or $-1$ (i.e., $k=0$ in Lemma \ref{lem:puncture}), then $g_\lambda$ is still hyperbolic and $J(g_\lambda)$ is also locally connected but it is not a Sierpi\'{n}ski carpet. See \cite[Figure 4]{FH09}.
\end{rmk}

\begin{figure}[!htpb]
  \setlength{\unitlength}{1mm}
  \centering
  \includegraphics[width=0.47\textwidth]{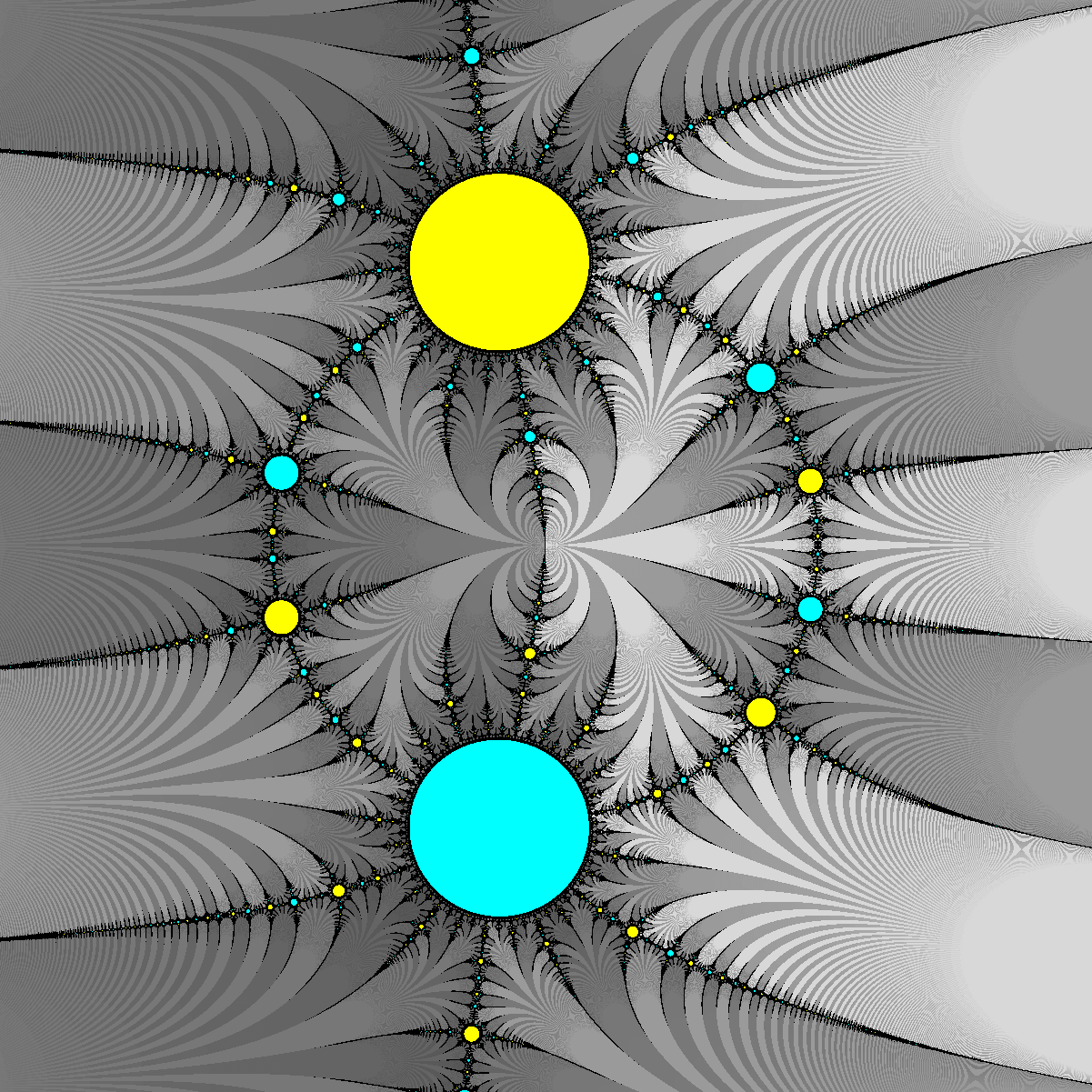} \quad
  \includegraphics[width=0.47\textwidth]{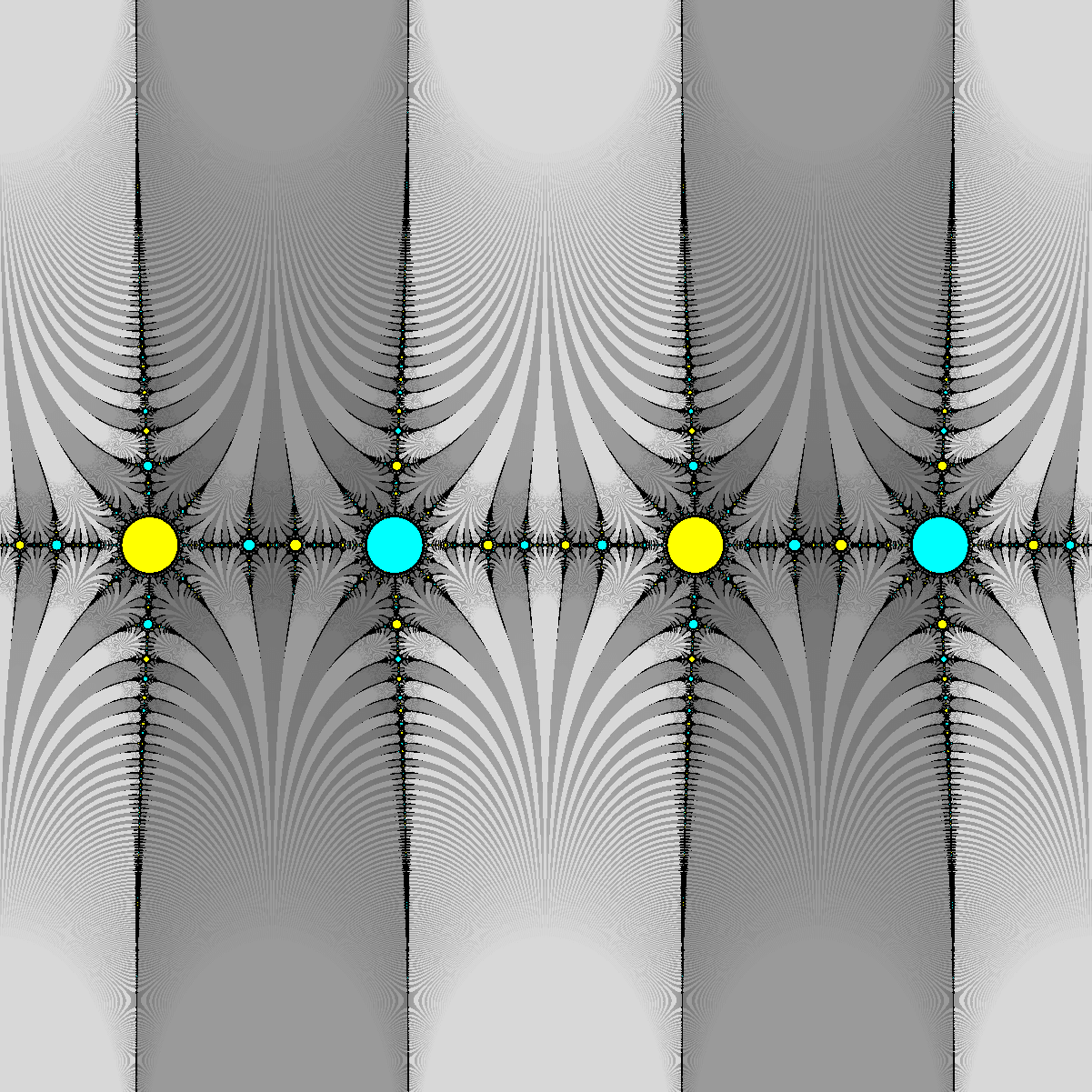}
  \caption{Left: The Julia set of $g_\lambda(z)=z e^{\frac{1}{2}\lambda(z-\frac{1}{z})}$ with $\lambda=\sqrt{1+4\pi^2}$, which is a Sierpi\'{n}ski carpet. Two super-attracting basins of $g_\lambda$ are marked yellow and cyan respectively. Right: The Julia set of $f_{0,\lambda}(z)=z+\sqrt{1+4\pi^2}\sin z$, which is also a Sierpi\'{n}ski carpet. The function $f_{0,\lambda}$ is a logarithmic lift of $g_\lambda$ and the Fatou set of $f_{0,\lambda}$ contains two grand orbits of wandering domains.}
  \label{Fig:carpet-wandering}
\end{figure}

Theorem \ref{thm:carpet-wandering} is an immediate consequence of the following result.

\begin{thm}\label{thm:carpet-lift}
For any $n\in\Z$ and $\lambda=\lambda_k:=\sqrt{1+4k^2\pi^2}$ with $k\in\Z_+$, the entire function
\begin{equation}
f_{n,\lambda}(z)=z+\lambda\sin z+2n\pi
\end{equation}
has wandering domains and its Julia set is a Sierpi\'{n}ski carpet.
\end{thm}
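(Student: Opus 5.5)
The plan is to realise $f_{n,\lambda}$ as a logarithmic lift of the map $g_\lambda$ of Lemma \ref{lem:puncture} and then transfer the carpet structure of $J(g_\lambda)$ to $J(f_{n,\lambda})$. Let $\lambda=\lambda_k$ with $k\in\Z_+$. We first write $e^{\ii z}=w$. Then $e^{\ii f_{n,\lambda}(z)}=e^{\ii z}e^{\ii\lambda\sin z}=w\,e^{\frac{1}{2}\lambda(w-\frac1w)}=g_\lambda(w)$, so
\begin{equation}
\exp(\ii\,\cdot)\circ f_{n,\lambda}=g_\lambda\circ\exp(\ii\,\cdot).
\end{equation}
By the standard result of Bergweiler on logarithmic lifts, we get $J(f_{n,\lambda})=\{z:e^{\ii z}\in J(g_\lambda)\}$, and likewise for the Fatou set.

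The next step produces the wandering domains. The super-attracting fixed points $z_\pm$ of $g_\lambda$ lie on $\T$, so they lift to real points $x_\pm+2\pi\Z$. One then computes directly that $f_{n,\lambda}(x_\pm+2m\pi)=x_\pm+2(m+n+j_\pm)\pi$ for some fixed integers $j_\pm$, where $j_\pm$ comes from $\lambda\sin x_\pm$. This integer is $\pm k$, as in the computation in Lemma \ref{lem:puncture}. Whenever $n+j_\pm\neq 0$, the lifted components are translated and are therefore wandering. When $n+j_\pm=0$, the components are invariant instead. Since $k\geq1$, the two shifts $n+k$ and $n-k$ cannot both vanish, so for every $n$ at least one family of lifted basins consists of wandering domains. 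This is the meaning of ``has wandering domains''.

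For the carpet property, we apply Lemma \ref{lem:carpet-criterion}. Every Fatou component $U$ of $g_\lambda$ is a Jordan domain in $\C^*$ whose closure avoids $0$ and $\infty$; indeed $U$ is bounded in $\C^*$. Such a domain does not separate $0$ from $\infty$, so $\exp(\ii\cdot)$ restricted to each component of its preimage is a conformal isomorphism onto $U$, extending homeomorphically to the closures. Hence every Fatou component of $f_{n,\lambda}$ is a bounded Jordan domain. Disjointness of the boundaries of distinct components also transfers. If two different lifts of the same $U$ had intersecting boundaries, then applying a translation by $2\pi m$ and projecting would make the closure of $U$ contain a loop around $0$. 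If lifts of different components $U\neq U'$ touched, their projections would touch, contradicting Lemma \ref{lem:puncture}.

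The remaining, and main, step is local connectivity of $J(f_{n,\lambda})$, which is the main obstacle. I would use Lemma \ref{lem:LC-criterion} on $J(f_{n,\lambda})\cup\{\infty\}$, together with the fact that $J(f_{n,\lambda})$ is connected; this follows since all Fatou components are bounded and simply connected (the criterion of \cite{BD00b}). Boundaries of complementary components are Jordan curves, so condition (a) holds. For condition (b), we bound the Euclidean diameters. The Fatou components of $g_\lambda$ inside a compact annulus $\{r\le|w|\le 1/r\}$ have diameters tending to zero by local connectivity of $J(g_\lambda)$. Since $\exp(\ii\cdot)$ is uniformly bi-Lipschitz on the corresponding horizontal strip, the lifts there have uniformly small Euclidean size, apart from finitely many per period strip of width $2\pi$. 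Spherical diameters of lifts far to the left or right along the strip are small automatically. Components of $g_\lambda$ near $0$ or $\infty$ have small spherical size, and they lift to sets of bounded Euclidean width far up or down, hence of small spherical diameter. Making these uniform estimates precise, particularly near $0$ and $\infty$ where the covering distorts, is where I expect the real work. A fallback is to show directly, using the expanding hyperbolic metric from the proof of Lemma \ref{lem:puncture} pulled back by $\exp(\ii\cdot)$, that Fatou components of $f_{n,\lambda}$ shrink as in \cite{BFR15}.
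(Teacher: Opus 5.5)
Your proposal is correct and follows the paper's route. Both arguments use the logarithmic lift $z\mapsto e^{\ii z}$ with Bergweiler's identity $J(f_{n,\lambda})=\exp^{-1}(J(g_\lambda))$, and both get wandering domains because the lifted super-attracting points are translated by $2\pi(n\pm k)$; your transfer of the Jordan-domain and disjoint-boundary properties fills in what the paper asserts in one line.

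The local connectivity step you flag as the main obstacle is easy. Since $e^{\ii z}$ is a local homeomorphism carrying $J(f_{n,\lambda})$ locally onto $J(g_\lambda)$, local connectivity at every finite point is inherited from Lemma \ref{lem:puncture}. At $\infty$ it follows from the fact, recalled just before Lemma \ref{lem:carpet-criterion}, that a continuum cannot fail to be locally connected at exactly one point.
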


\begin{proof}
We use logarithmic lift of $g_\lambda$ to obtain $f_{n,\lambda}$ having wandering domains with Sierpi\'{n}ski carpet Julia set.
The argument is inspired by \cite[Example 5.3]{Bak84}.

\textit{Step 1. Logarithmic lift.} Recall that $g_\lambda(z)=z e^{\frac{1}{2}\lambda(z-\frac{1}{z})}$. Denote $\exp(z):=e^{\ii z}$. Then for any $n\in\Z$, we have the following commuting diagram
\[
\begin{CD}
\mathbb{C} @>f_{n,\lambda}>> \mathbb{C} \\
@VV\exp V        @VV \exp V \\
\mathbb{C}^* @>g_\lambda>> \mathbb{C}^*.
\end{CD}
\]
By \cite{Ber95}, we have $z\in J(f_{n,\lambda})$ if and only if $\exp(z)\in J(g_\lambda)$. Note that $f_{n,\lambda}^{\circ j}(z+2\pi)=f_{n,\lambda}^{\circ j}(z)+2\pi$ for all $j\in\N$. Hence
\begin{equation}
J(f_{m,\lambda})=J(f_{n,\lambda})=J(f_{n,\lambda})+2\pi \text{\quad for any } m, n\in\Z.
\end{equation}
By Lemma \ref{lem:puncture}, $J(f_{n,\lambda_k})$ is a Sierpi\'{n}ski carpet for any $n\in\Z$ and $k\in\Z_+$.
It suffices to prove that $f_{n,\lambda_k}$ has a wandering domain for any $n\in\Z$ and $k\in\Z_+$.

\medskip
\textit{Step 2. Lifting one super-attracting basin.}
Let $k$ be a positive integer. It is straightforward to verify that
\begin{equation}
f_{-k,\lambda_k}(z)=z+\sqrt{1+4k^2\pi^2}\sin z-2k\pi
\end{equation}
has a super-attracting fixed point at each
\begin{equation}
\widetilde{z}_m^+:=(2m+1)\pi-\arcsin\frac{2k\pi}{\sqrt{1+4k^2\pi^2}}, \text{ where }m\in\Z.
\end{equation}
Let $\widetilde{U}_m^+$ be the immediate super-attracting basin of $f_{-k,\lambda_k}$ containing $\widetilde{z}_m^+$. Then $U_+=\exp(\widetilde{U}_m^+)$ and $\widetilde{U}_m^+$ is a Jordan domain, where $U_+$ is the immediate super-attracting basin of $g_{\lambda_k}$ containing $z_+=\exp(\widetilde{z}_m^+)=\frac{-1+2k\pi\ii}{\sqrt{1+4k^2\pi^2}}$ (see Lemma \ref{lem:puncture}).

Since $J(f_{n,\lambda_k})$ is invariant under translation by $2\pi$, we conclude that $\{\widetilde{U}_m^+:m\in\Z\}$ are different Fatou components of $f_{-k+n,\lambda_k}$ for any $n\in\Z$.
Moreover, $f_{-k+n,\lambda_k}(\widetilde{U}_m^+)=\widetilde{U}_{m+n}^+$ and $\{\widetilde{U}_m^+:m\in\Z\}$ are wandering domains of $f_{-k+n,\lambda_k}$, where $n\neq 0$.

\medskip
\textit{Step 3. Lifting another super-attracting basin.}
For positive integer $k$, it is easy to verify that
\begin{equation}
f_{k,\lambda_k}(z)=z+\sqrt{1+4k^2\pi^2}\sin z+2k\pi
\end{equation}
has a super-attracting fixed point at each
\begin{equation}
\widetilde{z}_m^-:=(2m+1)\pi+\arcsin\frac{2k\pi}{\sqrt{1+4k^2\pi^2}}, \text{ where }m\in\Z.
\end{equation}
Let $\widetilde{U}_m^-$ be the immediate super-attracting basin of $f_{k,\lambda_k}$ containing $\widetilde{z}_m^-$. Then $U_-=\exp(\widetilde{U}_m^-)$ and $\widetilde{U}_m^-$ is a Jordan domain, where $U_-$ is the immediate super-attracting basin of $g_{\lambda_k}$ containing $z_-=\exp(\widetilde{z}_m^-)=\overline{z}_+=\frac{-1-2k\pi\ii}{\sqrt{1+4k^2\pi^2}}$.

Similarly, $\{\widetilde{U}_m^-:m\in\Z\}$ are different Fatou components of $f_{k+n,\lambda_k}$ for any $n\in\Z$.
In particular, $f_{k+n,\lambda_k}(\widetilde{U}_m^-)=\widetilde{U}_{m+n}^-$ and $\{\widetilde{U}_m^-:m\in\Z\}$ are wandering domains of $f_{k+n,\lambda_k}$, where $n\neq 0$.

\medskip
\textit{Step 4. The conclusion.}
Combining Steps 2 and 3 together, we conclude that for any positive integer $k$, there are following three cases:
\begin{itemize}
\item if $n=-k$, then $f_{n,\lambda_k}$ has immediate super-attracting basins $\{\widetilde{U}_m^+:m\in\Z\}$ and wandering domains $\{\widetilde{U}_m^-:m\in\Z\}$;
\item if $n=k$, then $f_{n,\lambda_k}$ has immediate super-attracting basins $\{\widetilde{U}_m^-:m\in\Z\}$ and wandering domains $\{\widetilde{U}_m^+:m\in\Z\}$; and
\item if $n\in\Z\setminus\{k,-k\}$, then $f_{n,\lambda_k}$ has wandering domains $\{\widetilde{U}_m^+:m\in\Z\}$ and $\{\widetilde{U}_m^-:m\in\Z\}$.
\end{itemize}
Therefore, $f_{n,\lambda_k}$ has wandering domains and $J(f_{n,\lambda_k})$ is a Sierpi\'{n}ski carpet for any $n\in\Z$ and $k\in\Z_+$. The proof is complete.
\end{proof}

\begin{rmk}
One may also lift the parabolic basin or Siegel disk of $g_\lambda$ to obtain transcendental entire functions having carpet Julia sets with wandering domains. We refer to \cite{FH09} and \cite{BEFRS22} for further study of this topic.
\end{rmk}

\bibliographystyle{amsalpha}
\bibliography{E:/Latex-model/Ref1}

\end{document}